\newcommand{\llangle}{\langle\!\langle}
\newcommand{\rrangle}{\rangle\!\rangle}
\newtheorem{theorem}{Theorem}[section]
\newtheorem{lemma}[theorem]{Lemma}
\newtheorem{corollary}[theorem]{Corollary}
\theoremstyle{definition}
\newtheorem{remark}[theorem]{Remark}
\newtheorem{definition}[theorem]{Definition}
\newtheorem{construction}[theorem]{Construction}
\newtheorem{example}[theorem]{Example}
\newtheorem{examples}[theorem]{Examples}
\numberwithin{equation}{section}
\newcommand{\Z}{\mathbb{Z}}   % the integers
\newcommand{\A}{\mathcal{A}}%struct alg
\newcommand{\Ss}{\mathcal{S}}
\newcommand{\I}{\mathcal{I}} 
\newcommand{\XtensorE}{X\otimes_K \Delta}
\newcommand{\ga}{\gamma}
\newcommand{\la}{\lambda}
\newcommand{\si}{\sigma}
\newcommand{\FTS}{Freudenthal triple system}
\newcommand{\step}[2]{\medskip\noindent{\em Step} {#1}: \ {\em #2}.\medskip}
\DeclareMathOperator{\CD}{CD}
\DeclareMathOperator{\Char}{char}
\DeclareMathOperator{\Gal}{Gal}
\DeclareMathOperator{\End}{End}
\DeclareMathOperator{\Fix}{Fix}
\DeclareMathOperator{\Nrd}{Nrd}
\DeclareMathOperator{\Trd}{Trd}
\begin{document}

\title{Exceptional Moufang quadrangles and structurable algebras}
\author{Lien Boelaert%
%\and 
\quad
Tom De Medts%
}
\date{\today}
\maketitle

\begin{abstract}
	In 2000, J. Tits and R. Weiss classified all Moufang spherical buildings of rank two, also known as Moufang polygons.
	The hardest case in the classification consists of the Moufang quadrangles.
	They fall into different families, each of which can be described by an appropriate algebraic structure.
	For the exceptional quadrangles, this description is intricate and involves many different
	maps that are defined {\em ad hoc} and lack a proper explanation.

	In this paper, we relate these algebraic structures to two other classes of algebraic structures that had
	already been studied before, namely to Freudenthal triple systems and to structurable algebras.
	We show that these structures give new insight in the understanding of the corresponding Moufang quadrangles.
\end{abstract}

\vspace*{2ex} \noindent
{\tt MSC-2010} : 17A30, 17A40, 17C40, 20G15, 20G41

\noindent
{\tt keywords} : Moufang polygons, Moufang quadrangles, Freudenthal triple systems, quadrangular algebras, structurable algebras,
linear algebraic groups, exceptional groups

%%%%%%%%%%%%%%%%%%%%%%%%%%%%%%%%%%%%%%%%%%%%%%%%%%%%%%%%%%%%%%%%%%%%%%%%%%%%%%%%%%%%%%%%%%%%%%%%%%%%%%%%%%%%%%%%%%
%                                                                                                                %
%  SECTION : INTRODUCTION                                                                                        %
%                                                                                                                %
%%%%%%%%%%%%%%%%%%%%%%%%%%%%%%%%%%%%%%%%%%%%%%%%%%%%%%%%%%%%%%%%%%%%%%%%%%%%%%%%%%%%%%%%%%%%%%%%%%%%%%%%%%%%%%%%%%
\section{Introduction}

In 1974, Jacques Tits published his famous lecture notes ``Buildings of Spherical Type and Finite BN-Pairs'' \cite{T74},
in which he classified all spherical buildings of rank at least $3$.
The main motivation for studying buildings is given by the following quote, which we have taken from \cite[p.\@~V]{T74}.
\begin{quote}
	\em
	The origin of the notions of buildings and BN-pairs lies in an attempt to give a systematic procedure for the geometric
	interpretation of the semi-simple Lie groups and, in particular, the exceptional groups.
\end{quote}
It took another $26$ years before the analogous result in lower rank, namely the classification of spherical buildings of rank $2$ satisfying
the Moufang property, was finished by Jacques Tits and Richard Weiss \cite{TW}.
There is no doubt that the hardest part in the whole classification is precisely where the exceptional quadrangles turn up,
and in fact, there is an ongoing effort to try to understand the structure of these exceptional quadrangles
and the corresponding rank $2$ forms of the exceptional linear algebraic groups.

A first attempt to provide an algebraic framework to describe the Moufang quadrangles was given by the second author \cite{D},
who introduced {\em quadrangular systems}, an algebraic structure consisting of a pair of groups intertwined in a very
delicate way, with no less than $20$ defining axioms.
Unfortunately, this algebraic structure is not very well suited for getting a deeper algebraic understanding
of the specific examples, in particular the exceptional ones;
its main purpose is to have a uniform algebraic structure to describe all possible Moufang quadrangles.

A second attempt, which is more focused on the exceptional Moufang quadrangles, was provided by Richard Weiss \cite{W},
who developed a theory of {\em quadrangular algebras}.
They describe the algebraic structures needed to construct the exceptional Moufang quadrangles
as a generalization of pseudo-quadratic spaces.

Inspired by discussions that we had with Skip Garibaldi, we became aware of the existence of a large class of
non-associative algebras called {\em structurable algebras}, which have been introduced by Bruce Allison in 1978 \cite{A1}
in the context of exceptional Lie algebras.
These structurable algebras were known to be related to yet another class of algebraic structures, known
as {\em Freudenthal triple systems}, introduced by Kurt Meyberg in 1968 \cite{Me}.
We immediately point out one drawback of both algebraic systems: at the moment, they are only defined over fields of characteristic
different from $2$, and very often (but this seems less essential) also different from $3$.

It turns out that every quadrangular algebra over a field of good characteristic can be made into a Freudenthal triple system,
and consequently also into a structurable algebra;
see Theorem~\ref{th:Q-main} below.
Even in the case of the exceptional quadrangular algebras (i.e.\@ those of type $E_6$, $E_7$ and $E_8$) these structurable algebras
are well understood and can be nicely described; see Theorem~\ref{th:E-main} below.
It is our hope that these new insights will lead to a better understanding of the corresponding Moufang quadrangles.

We point out that the algebraic structures that we obtain, are essentially describing the structure of the (non-abelian) rank one residues;
nevertheless, they are related with the module structure (arising from the rank two structure) in a surprisingly simple fashion
(see Theorem~\ref{th: module FTS} and Corollary~\ref{co: V-mod} below).

\subsubsection*{Organization of the paper}

The paper is organized as follows.
In section~\ref{se:moufpol}, we first recall the notion of a Moufang polygon,
and we explain how many of the examples are associated to linear algebraic groups of relative rank two.
The reader can safely skip this section if he wishes; it is not directly used in the rest of the paper,
but explains why the study of quadrangular algebras is of interest.
In section~\ref{se:algknown}, we recall some known facts about the different types of algebraic structures that
we will encounter, namely quadrangular algebras, \FTS s, and structurable algebras.

In section~\ref{se:FTS}, we explain the connection between quadrangular algebras and \FTS s.
In the main section~\ref{se:strQA}, we explain how every quadrangular algebra can be made into a structurable algebra;
we have to use some explicit form of Galois descent in order to arrive at an exact description of those algebras.
We then have a closer look at the case of pseudo-quadratic quadrangular algebras in section~\ref{se:pqQA},
and at the case of quadrangular algebras of type $E_6$, $E_7$ and $E_8$ in the final section~\ref{se:exQA}.

\subsubsection*{Acknowledgments}

We thank Richard Weiss for several very interesting and stimulating discussions on the exceptional Moufang quadrangles,
and in particular we were inspired by an unpublished manuscript of him that we could use in the proof of Lemma~\ref{le:E1}.
We are grateful to Detlev Hoffmann for pointing out to us why the Arason invariant determines the quadratic forms of type $E_8$
up to similarity (see Remark~\ref{rem:arason}).
We also express our gratitude to Skip Garibaldi for many fruitful discussions, and in particular for making us aware
of the theory of structurable algebras, which was completely unknown to us.

Finally, we are extremely thankful to the referee who did an absolutely amazing job by carefully reading an
earlier version of the paper from A~to~Z,
and suggesting many relevant improvements.

\begin{small}
\setcounter{tocdepth}{2}
\renewcommand{\contentsname}{}
\tableofcontents
\end{small}

%%%%%%%%%%%%%%%%%%%%%%%%%%%%%%%%%%%%%%%%%%%%%%%%%%%%%%%%%%%%%%%%%%%%%%%%%%%%%%%%%%%%%%%%%%%%%%%%%%%%%%%%%%%%%%%%%%
%                                                                                                                %
%  SECTION : MOUFANG POLYGONS                                                                                    %
%                                                                                                                %
%%%%%%%%%%%%%%%%%%%%%%%%%%%%%%%%%%%%%%%%%%%%%%%%%%%%%%%%%%%%%%%%%%%%%%%%%%%%%%%%%%%%%%%%%%%%%%%%%%%%%%%%%%%%%%%%%%
\section{Moufang polygons}\label{se:moufpol}

A {\em Moufang polygon} is a notion from incidence geometry introduced by Jacques Tits.
We only give a brief summary of the theory of Moufang polygons, and we refer to \cite{TW} or \cite{DV} for more details.
The importance will immediately become clear in Theorem~\ref{th:mpol} below.

\subsection{Definitions}

A {\em generalized $n$-gon} $\Gamma$ is a connected bipartite graph with diameter $n$ and girth $2n$, where $n \geq 2$.
If we do not want to specify the value of $n$, then we call this a {\em generalized polygon}.
We call a generalized polygon {\em thick} if every vertex has at least $3$ neighbors.
A {\em root} in $\Gamma$ is a (non-stammering) path of length $n$ in $\Gamma$.
Observe that the two extremal vertices of such a path are always {\em opposite}, i.e.\@ their distance is equal to the diameter $n$ of $\Gamma$.
An {\em apartment} in $\Gamma$ is a circuit of length $2n$.

Let $\Gamma$ be a thick generalized $n$-gon with $n \geq 3$, and let $\alpha = (x_0,\dots,x_n)$ be a root of $\Gamma$.
Then the group $U_\alpha$ of all automorphisms of $\Gamma$ fixing all neighbors of $x_1, \dots, x_{n-1}$ (called a {\em root group})
acts freely on the set of vertices incident with $x_0$ but different from $x_1$.
If $U_\alpha$ acts transitively on this set (and hence regularly), then we say that $\alpha$ is a {\em Moufang root}.
It turns out that $\alpha$ is a Moufang root if and only if $U_\alpha$ acts regularly on the set of
apartments through $\alpha$.

A {\em Moufang polygon} is a generalized $n$-gon for which every root is Moufang.
We then also say that $\Gamma$ satisfies the {\em Moufang condition}.
The group generated by all the root groups is sometimes called the {\em little projective group} of $\Gamma$.

Moufang polygons have been classified by J.~Tits and R.~Weiss \cite{TW}.
Loosely speaking, the result is the following.
\begin{theorem}[\cite{TW}]\label{th:mpol}
	Let $\Gamma$ be a Moufang $n$-gon.
	Then $n \in \{ 3, 4, 6, 8 \}$.
	Moreover, every Moufang polygon arises from an absolutely simple linear algebraic group of relative rank $2$,
	or from a corresponding classical group or group of mixed type.
\end{theorem}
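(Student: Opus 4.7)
The plan is to combine two major strands: first, a commutator-theoretic analysis of the root groups along an apartment, which restricts the possible values of $n$ and produces an algebraic parametrisation of the root datum; and second, a comparison of this datum with Tits' classification of semisimple algebraic groups, which identifies each case as coming from a group of relative rank~$2$ (or from a classical or mixed variant thereof).

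First I would fix an apartment $\Sigma$ in $\Gamma$, label the root groups of the $2n$ consecutive roots in $\Sigma$ cyclically by $U_1,\dots,U_{2n}$, and use the Moufang condition together with the induced $BN$-pair structure to derive the Tits commutator relations
\[
    [U_i, U_j] \subseteq \langle U_{i+1}, \dots, U_{j-1}\rangle \qquad (1 \leq i < j \leq n),
\]
with $[U_i, U_{i+1}] = 1$; the product map $U_1 \times \cdots \times U_n \to \langle U_1, \dots, U_n\rangle$ is then a bijection. Together with the so-called $\mu$-maps, which realise the reflections of the apartment and intertwine $U_i$ with $U_{-i}$, these relations form the combinatorial skeleton that carries the whole classification.

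To establish $n\in\{3,4,6,8\}$, I would analyse the nilpotency structure of $U_+ = \langle U_1, \dots, U_n\rangle$ and the pairings $U_i \times U_{n+1-i} \to U_+$ compatible with the $\mu$-action; a classical argument of Tits shows that for $n\ge 5$ with $n\ne 6,8$ no such non-degenerate pairing exists, which (pleasantly) parallels the Feit--Higman restriction in the finite case. For each admissible $n$ the commutator relations then parametrise the root groups by an algebraic structure: an alternative division ring for $n=3$, an anisotropic hexagonal (cubic norm) system for $n=6$, an octagonal set (forcing characteristic~$2$ with a Tits endomorphism) for $n=8$, and for $n=4$ one of the eleven families of Moufang quadrangles, including the exceptional ones of type $E_6,E_7,E_8,F_4$ that motivate the present paper. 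Matching these algebraic data against the Tits indices of absolutely simple algebraic groups of relative rank~$2$ over a field, while allowing the classical groups (e.g.\ unitary groups over non-commutative division algebras) and the mixed-type groups associated with inseparable field extensions, then yields the second assertion.

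The main obstacle is unambiguously the quadrangle case $n=4$. There the two alternating root groups need not be abelian nor even isomorphic, and their interaction is governed by an intricate combination of a module structure, a pseudo-quadratic form, and, in the exceptional families, further {\em ad hoc} maps whose axiomatisation is extremely delicate. Taming this case occupies the lion's share of \cite{TW} and is precisely what the present paper seeks to illuminate through the connection with Freudenthal triple systems and structurable algebras.
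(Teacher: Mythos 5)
This statement is not proved in the paper at all: it is the Tits--Weiss classification, stated ``loosely speaking'' and imported wholesale from \cite{TW}, so the paper's only ``proof'' is the citation. Measured against that, your proposal is a reasonable table of contents for the argument that \cite{TW} actually carries out --- commutator relations along an apartment, the restriction $n \in \{3,4,6,8\}$, parametrisation of the root groups by algebraic structures, and identification of each family with an absolutely simple algebraic group of relative rank two or a classical or mixed analogue --- but it does not prove anything: every step you invoke is itself a major theorem. In particular, the nonexistence of Moufang $n$-gons for $n \notin \{3,4,6,8\}$ (due to Tits and to Weiss) is established by a long and delicate analysis of the commutator relations, not by the one-sentence ``non-degenerate pairing'' argument you gesture at; the parallel with Feit--Higman is only an analogy, since no counting is available over infinite fields. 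Similarly, the bijectivity of the product map on $U_+$, the existence and properties of the $\mu$-maps, the classification of the parametrising structures (Bruck--Kleinfeld for $n=3$, anisotropic cubic norm structures for $n=6$, and above all the quadrangle case, which you rightly identify as the heart of the matter), and the final matching with Tits indices together occupy essentially the whole of \cite{TW} and are simply asserted in your sketch.

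Two factual corrections: the Moufang quadrangles fall into six families, not eleven --- indifferent type, quadratic form type, involutory type, pseudo-quadratic form type, type $E_6$, $E_7$, $E_8$, and type $F_4$, exactly as listed in section~\ref{sss:quadrangles} of this paper; and the identification with groups in \cite{TW} is not done merely by ``matching against Tits indices'' but by explicitly exhibiting, family by family, the corresponding algebraic, classical or mixed group. If your intention is to cite the classification, as this paper does, your outline is an acceptable summary of where the proof lives; if it is meant as a proof, the gap is that all of its substantive content is outsourced.
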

In particular, every Moufang polygon is of ``algebraic origin'', and in fact, the Moufang polygons provide a useful tool
to help in the understanding of the corresponding groups; this is particularly true for the Moufang polygons arising
from linear algebraic groups of exceptional type.
For instance, the Kneser--Tits problem for groups of type $E_{8,2}^{66}$ has recently been solved using the theory of Moufang polygons \cite{PTW}.

In order to describe a Moufang polygon in terms of algebraic data, we will use so-called {\em root group sequences}.
A root group sequence for a Moufang $n$-gon is a sequence of $n$ root groups, labeled $U_1,\dots,U_n$, together with
{\em commutator relations} describing how elements of two different root groups $U_i$ and $U_j$ commute.
In each case, the commutator of an element of $U_i$ and $U_j$ (with $i<j$) belongs to the group $\langle U_{i+1},\dots, U_{j-1} \rangle$.
The following result is crucial.
\begin{theorem}
	Let $\Gamma$ be a Moufang $n$-gon.
	Then $\Gamma$ is completely determined by the root groups $U_1,\dots,U_n$ together with their commutator relations.
\end{theorem}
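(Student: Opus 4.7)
The plan is to show that the combinatorial object $\Gamma$ can be reconstructed, up to isomorphism, from the purely algebraic datum consisting of the $n$ root groups and their commutator relations. The argument splits naturally into a local reconstruction (near a fixed root) and a global one.

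First, I would fix an apartment $\Sigma$ of $\Gamma$ and a root $\alpha = (x_0,\dots,x_n)$ contained in $\Sigma$, so that $U_1,\dots,U_n$ are indexed by the $n$ consecutive roots of $\Sigma$ starting at $\alpha$. The key combinatorial input is the unique factorization property of the positive subgroup $U_+ := \langle U_1,\dots,U_n\rangle$ of the little projective group: every element of $U_+$ has a unique expression $u_1u_2\cdots u_n$ with $u_i\in U_i$. Combined with the given commutator relations---which allow any word $u_ju_i$ with $j>i$ to be normalized back into standard form using only elements of $U_i,U_{i+1},\dots,U_j$---this pins down the abstract group structure of $U_+$ completely from the algebraic datum.

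Second, I would reconstruct a neighborhood of $\alpha$ purely in group-theoretic terms. The regular action of $U_1$ on the vertices adjacent to $x_0$ different from $x_1$ identifies this set with $U_1$ (up to base point), and analogously at $x_n$ using $U_n$. The stabilizers $\mathrm{Stab}_{U_+}(x_0)$ and $\mathrm{Stab}_{U_+}(x_n)$ can be written as the products $U_2\cdots U_n$ and $U_1\cdots U_{n-1}$, so incidence in a neighborhood of $\alpha$ is entirely encoded by cosets of these subgroups of $U_+$.

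Finally, to globalize, I would observe that by the transitivity of the little projective group on (apartment, root) pairs, all of $\Gamma$ is obtained from this local picture via translates by the little projective group, which is in turn generated by $U_+$ together with one opposite root group. The gluing to the opposite side is controlled by the $\mu$-maps $\mu(u)$ for $u\in U_i^{\times}$, and these are expressible from the commutator relations once the unique factorization of step one is in hand. The polygon is then exhibited as a coset geometry whose data are determined entirely by $(U_1,\dots,U_n)$ and the commutator relations. The main obstacle, and the step to which I would devote the most care, is precisely this last one: verifying that the ``opposite'' structure is determined by the given positive data, rather than being an independent piece of information, so that the coset geometry one writes down depends only on the abstract root group sequence and genuinely reproduces $\Gamma$.
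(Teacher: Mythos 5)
The paper itself gives no argument here: it simply defers to \cite[Chapter~7]{TW}, where the statement is proved by showing that an isomorphism of root group sequences extends to an isomorphism of the polygons. Your Steps~1 and~2 follow that skeleton correctly: the commutator relations together with the unique factorization $U_+=U_1U_2\cdots U_n$ determine $U_+$ as an abstract group, and the stabilizers in $U_+$ of the vertices of the fixed apartment are products of consecutive root groups (this last fact is not automatic and is one of the lemmas of {\em loc.\@ cit.}, but it is a legitimate ingredient).

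The genuine gap is in your Step~3. The opposite root group, the $\mu$-maps and the little projective group are \emph{not} part of the given datum $(U_1,\dots,U_n,\text{commutator relations})$: the $\mu$-maps are defined in terms of the polygon (more precisely, in terms of root groups opposite the given ones), so the claim that they are ``expressible from the commutator relations'' assumes exactly what is to be proved, and your gluing of the ``opposite side'' is left as an unresolved --- and in this form circular --- step, as you yourself note. The point that makes the theorem true is that no opposite data is needed at all: the groups $U_1,\dots,U_n$ all fix one chamber $c$ of the apartment $\Sigma$, and $U_+$ acts simply transitively on the apartments containing $c$ (equivalently, on the chambers opposite $c$). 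Since every chamber of $\Gamma$ lies in a common apartment with $c$, it follows that $\Gamma$ is the union of the $U_+$-translates of the single apartment $\Sigma$; hence $\Gamma$ is already the coset geometry of $U_+$ with respect to the stabilizers of the $2n$ vertices of $\Sigma$, each of which is a product of consecutive $U_i$'s and therefore determined by the given data. Replacing your Step~3 by this orbit argument (which is how \cite[Chapter~7]{TW} proceeds, and how the converse construction in Chapter~8 of that book is set up) closes the proof; as written, your outline does not.
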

\begin{proof}
	See \cite[Chapter 7]{TW}.
\end{proof}
For more details about this procedure, and how the Moufang polygons can be reconstructed from the root group sequences,
we refer to \cite{TW} or to the survey article \cite{DV}.

For each type of Moufang polygons, we will describe an {\em algebraic structure} which will allow us to parametrize the root groups and describe the commutator relations.

\subsection{Algebraic structures for Moufang polygons}

We give an overview of the classification of Moufang polygons and of the algebraic structures involved in this classification.
This section contains more information than we will actually need, but it puts our theory in a broader context,
which is not so easy to find in the existing literature.

\subsubsection{Moufang triangles}\label{sss:triangles}

Every Moufang triangle (i.e.\@ a Moufang projective plane) can be described in terms of an {\em alternative division algebra},
i.e.\@ a division algebra $A$ which is not necessarily associative, but which instead satisfies the weaker identities
\[ a^{-1} (ab) = b = (ba) a^{-1} \quad \text{ for all } a,b \in A \setminus \{ 0 \} . \]
These algebras have been classified by Bruck and Kleinfeld:
either they are associative after all, or they are $8$-dimensional {\em Cayley--Dickson division algebras}, also known as {\em octonion algebras}.

If $A$ is such an alternative division algebra, then we define $U_1 \cong U_2 \cong U_3 \cong (A, +)$;
we denote the explicit isomorphisms from $A$ to $U_i$ by $x_i$, and we call this the {\em parametrization} of the groups $U_i$ by $(A,+)$.
The commutator relations are then given by
\[ [x_1(a), x_3(b)] = x_2(ab) \]
for all $a,b \in A$; note that the other commutators $[U_1, U_2]$ and $[U_2, U_3]$ are trivial.
Every Moufang triangle can be described in this fashion.

If $A$ is a finite-dimensional division algebra of degree $d$, then this Moufang triangle arises from a linear algebraic group of
absolute type $\mathsf{A}_{3d-1}$.
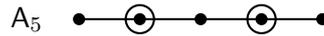
\begin{figure}[ht!]
\begin{center}
\begin{pspicture}(-1,0.2)(4,0.5)
	\psset{unit=.8}
	\rput[r](-.6,0){$\mathsf{A}_5$}
	\pscircle*(0,0){.1}
	\pscircle*(1,0){.1} \pscircle(1,0){.25}
	\pscircle*(2,0){.1}
	\pscircle*(3,0){.1} \pscircle(3,0){.25}
	\pscircle*(4,0){.1}
	\psline(0,0)(4,0)
\end{pspicture}
\end{center}
\caption{Moufang triangle parametrized by a quaternion division algebra}
\end{figure}

If $A$ is infinite-dimensional, then the associated group is no longer an algebraic group, but it can still be viewed as a classical group,
namely $\mathsf{PSL}_3(A)$.

The case where $A$ is an octonion division algebra is exceptional, and arises from a linear algebraic group of absolute type $\mathsf{E}_6$.
\begin{figure}[ht!]
\begin{center}
\begin{pspicture}(-1,0.2)(4,1.0)
	\psset{unit=.8}
	\rput[r](-.6,0){$\mathsf{E}_6$}
	\pscircle*(0,0){.1} \pscircle(0,0){.25}
	\pscircle*(1,0){.1}
	\pscircle*(2,0){.1}
	\pscircle*(3,0){.1}
	\pscircle*(4,0){.1} \pscircle(4,0){.25}
	\pscircle*(2,1){.1}
	\psline(0,0)(4,0)
	\psline(2,0)(2,1)
\end{pspicture}
\end{center}
\caption{Moufang triangle parametrized by an octonion division algebra}
\end{figure}
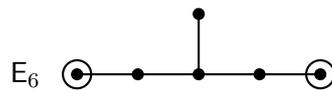

\subsubsection{Moufang hexagons}\label{sss:hexagons}

Every Moufang hexagon can be described in terms of a {\em (quadratic) Jordan division algebra of degree $3$},
or equivalently, by {\em anisotropic cubic norm structures}.
We will not give a precise definition of these algebraic structures since we will not need them explicitly, but we refer
to \cite{DV, KMRT, TW} instead.

We will only mention that if $J$ is such an anisotropic cubic norm structure over a field $K$, then either $J/K$ is a purely inseparable cubic extension,
or $\dim_K J \in \{ 1, 3, 9, 27 \}$.

If $J$ is such an anisotropic cubic norm structure, then we define $U_1 \cong U_3 \cong U_5 \cong (J, +)$ and
$U_2 \cong U_4 \cong U_6 \cong (K, +)$.
The commutator relations can be expressed in terms of the norm, trace and Freudenthal cross product, but their explicit form
is not important for us; we refer to \cite{DV, TW} for more details.

The case where $J = K$ gives rise to the so-called {\em split Cayley hexagon}, which arises from a split linear algebraic group of type $\mathsf{G}_{2}$.
\begin{figure}[ht!]
\begin{center}
\begin{pspicture}(-1,0.2)(1,0.5)
	\psset{unit=.8}
	\rput[r](-.6,0){$\mathsf{G}_2$}
	\pscircle*(-0.1,0){.1} \pscircle(-0.1,0){.25}
	\pscircle*(1.1,0){.1} \pscircle(1.1,0){.25}
	\psline(-0.1,0)(1.1,0)
	\psline(-0.1,0.08)(1.1,0.08)
	\psline(-0.1,-0.08)(1.1,-0.08)
	\psline(0.75,0.3)(0.35,0)(0.75,-0.3)
\end{pspicture}
\end{center}
\caption{Moufang hexagon parametrized by a commutative field}
\end{figure}
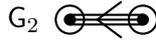

If $J$ is a cubic separable extension field of $K$, then the corresponding Moufang hexagon is the so-called {\em twisted triality hexagon},
which arises from a quasi-split linear algebraic group of type $^{3}\mathsf{D}_{4}$ or $^{6}\mathsf{D}_{4}$.
\begin{figure}[ht!]
\begin{center}
\begin{pspicture}(-1,0.0)(1,0.7)
	\psset{unit=.8}
	\rput[r](-.6,0){$^{3,6}\mathsf{D}_4$}
	\pscircle*(0,0){.1}
	\pscircle*(1,0.5){.1}
	\pscircle*(1,0){.1}
	\pscircle*(1,-0.5){.1}
	\psarc(0.5,0){0.5}{90}{-90}
	\psline(0.5,0.5)(1,0.5)
	\psline(0.5,-0.5)(1,-0.5)
	\psarc(1,0.5){0.25}{0}{180}
	\psarc(1,-0.5){0.25}{180}{0}
	\psline(1.25,0.5)(1.25,-0.5)
	\psline(0.75,0.5)(0.75,-0.5)
	\psline(0,0)(1,0)
	\pscircle(0,0){.25}
\end{pspicture}
\end{center}
\caption{Moufang hexagon parametrized by a cubic extension}
\end{figure}
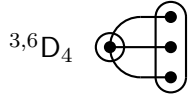

If $J$ is a purely inseparable cubic extension field of $K$, then the corresponding Moufang hexagon arises from a so-called group of mixed type;
such a group is a subgroup (as an abstract group) of an algebraic group of type~$\mathsf{G}_2$, but it is defined over the {\em pair of fields}
$(K, J)$ instead of over a single field.
See, for instance, \cite{T74} for more information on these groups of mixed type.

The next case is where $J$ is a $9$-dimensional $K$-algebra.
There are two cases to distinguish; either $J$ is a central simple cubic cyclic division algebra,
or it is a twisted form of such an algebra, arising from an involution of the second kind on such an algebra.
The resulting Moufang hexagons arise from linear algebraic groups of type $\mathsf{E}_6$ and $^2\mathsf{E}_6$, respectively.
\begin{figure}[ht!]
\begin{center}
\begin{pspicture}(-1,0.2)(4,1.0)
	\psset{unit=.8}
	\rput[r](-.6,0){$\mathsf{E}_6$}
	\pscircle*(0,0){.1}
	\pscircle*(1,0){.1}
	\pscircle*(2,0){.1} \pscircle(2,0){.25}
	\pscircle*(3,0){.1}
	\pscircle*(4,0){.1}
	\pscircle*(2,1){.1} \pscircle(2,1){.25}
	\psline(0,0)(4,0)
	\psline(2,0)(2,1)
\end{pspicture}
\begin{pspicture}(-1.5,-0.3)(3.5,0.6)
	\psset{unit=.8}
	\rput[r](-.6,-.5){$^2\mathsf{E}_6$}
	\pscircle*(0,0){.1} \pscircle(0,0){.25}
	\pscircle*(1,0){.1} \pscircle(1,0){.25}
	\pscircle*(2,.5){.1}
	\pscircle*(2,-.5){.1}
	\pscircle*(3,.5){.1}
	\pscircle*(3,-.5){.1}
	\psline(0,0)(1,0)
	\psarc(1.5,0){0.5}{90}{-90}
	\psline(1.5,0.5)(3,0.5)
	\psline(1.5,-0.5)(3,-0.5)
\end{pspicture}
\end{center}
\caption{Moufang hexagons parametrized by $9$-dimensional cubic norm structures}
\end{figure}
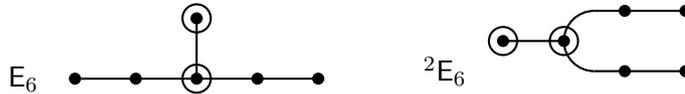

Finally, if $J$ is $27$-dimensional over $K$, then it is an Albert division algebra,
and the resulting Moufang hexagons arise from linear algebraic groups of absolute type $\mathsf{E}_8$.
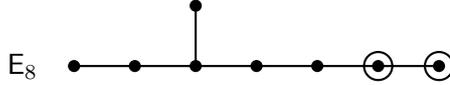
\begin{figure}[ht!]
\begin{center}
\begin{pspicture}(-1,0.2)(4,1.0)
	\psset{unit=.8}
	\rput[r](-.6,0){$\mathsf{E}_8$}
	\pscircle*(0,0){.1}
	\pscircle*(1,0){.1}
	\pscircle*(2,0){.1}
	\pscircle*(3,0){.1}
	\pscircle*(4,0){.1}
	\pscircle*(5,0){.1} \pscircle(5,0){.25}
	\pscircle*(6,0){.1} \pscircle(6,0){.25}
	\pscircle*(2,1){.1}
	\psline(0,0)(6,0)
	\psline(2,0)(2,1)
\end{pspicture}
\end{center}
\caption{Moufang hexagons parametrized by Albert division algebras}
\end{figure}

\subsubsection{Moufang octagons}

The Moufang octagons have a fairly simple structure from an algebraic point of view.
Every Moufang octagon can be described from a commutative field $K$ with $\Char(K) = 2$ equipped with a
{\em Tits endomorphism} $\sigma$, i.e.\@ an endomorphism such that $(x^\sigma)^\sigma = x^2$ for all $x \in K$.
The root groups $U_1, U_3, U_5, U_7$ are parametrized by $(K, +)$, and the root groups $U_2, U_4, U_6, U_8$ are
parametrized by some non-abelian group $T$ with underlying set $K \times K$, and with group operation
\[ (a, b) \cdot (c, d) := (a+c, b+d+a^\sigma c) \quad \text{ for all } a,b,c,d \in K . \]
We do not go into more details, and we again refer to \cite{TW}.
The corresponding groups are the Ree groups of type $^2\mathsf{F}_4$.
\begin{figure}[ht!]
\begin{center}
\begin{pspicture}(-1.5,-0.3)(3.5,0.6)
	\psset{unit=.8}
	\rput[r](-.6,0){$^2\mathsf{F}_4$}
	\pscircle*(1,.5){.1}
	\pscircle*(1,-.5){.1}
	\pscircle*(2,.5){.1}
	\pscircle*(2,-.5){.1}
	\psarc(.8,0){0.44}{90}{-90}
	\psarc(.8,0){0.56}{90}{-90}
	\psline(.8,.44)(1,.44)
	\psline(.8,.56)(1,.56)
	\psline(.8,-.44)(1,-.44)
	\psline(.8,-.56)(1,-.56)
	\psline(1,0.5)(2,0.5)
	\psline(1,-0.5)(2,-0.5)
	\psarc(1,0.5){0.25}{0}{180}
	\psarc(1,-0.5){0.25}{180}{0}
	\psline(1.25,0.5)(1.25,-0.5)
	\psline(0.75,0.5)(0.75,-0.5)
	\psarc(2,0.5){0.25}{0}{180}
	\psarc(2,-0.5){0.25}{180}{0}
	\psline(2.25,0.5)(2.25,-0.5)
	\psline(1.75,0.5)(1.75,-0.5)
\end{pspicture}
\end{center}
\caption{Moufang octagons}
\end{figure}
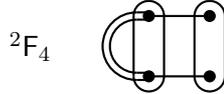

\subsubsection{Moufang quadrangles}\label{sss:quadrangles}

We finally come to the most involved case in the classification, which is the case of the Moufang quadrangles.
In principle, it is possible to define a single algebraic structure to describe all possible Moufang quadrangles;
this gives rise to the so-called {\em quadrangular systems} which have been introduced by the second author \cite{D}.
These structures, however, have some disadvantages from an algebraic point of view; most notably, the definition does not
mention an underlying field of definition (although it is possible to construct such a field from the data), and
the axiom system looks very wild and complicated, with no less than $20$ defining identities.

Instead, we will follow the original classification as given by Tits and Weiss in \cite{TW}, distinguishing six different
(non-disjoint) classes:
\begin{compactenum}[(1)]
\item Moufang quadrangles of indifferent type;
\item Moufang quadrangles of quadratic form type;
\item Moufang quadrangles of involutory type;
\item Moufang quadrangles of pseudo-quadratic form type;
\item Moufang quadrangles of type $E_6$, $E_7$ and $E_8$;
\item Moufang quadrangles of type $F_4$.
\end{compactenum}
The Moufang quadrangles of types (2)--(4) are often called {\em classical}, those of type (5) and (6) are called {\em exceptional}
and those of type (1) are of {\em mixed type}.
Since the Moufang quadrangles of type (1) and (6) only exist over fields of characteristic two, and moreover are not directly related
to rank two forms of algebraic groups, we will exclude those two classes from our further discussion.

\paragraph{Moufang quadrangles of quadratic form type}

Moufang quadrangles of quadratic form type are determined by an anisotropic quadratic form $q \colon V \to K$, where $V$ is an
arbitrary (possibly infinite-dimensional) vector space over some commutative field $K$.
The root groups $U_1$ and $U_3$ are parametrized by $(K, +)$ and the root groups $U_2$ and $U_4$ are parametrized by $(V, +)$;
the commutator relations will involve the quadratic form $q$ and its corresponding bilinear form $f$.
If $d = \dim_K V$ is finite, then these Moufang quadrangles arise from algebraic groups;
they are of absolute type $\mathsf{B}_{\ell+2}$ if $d = 2\ell + 1$ is odd, and of type $\mathsf{D}_{\ell+2}$ if $d = 2\ell$ is even.
\begin{figure}[ht!]
\begin{center}
\begin{pspicture}(-1,0.2)(5,0.5)
	\psset{unit=.8}
	\rput[r](-.6,0){$\mathsf{B}_{\ell+2}$}
	\pscircle*(0,0){.1} \pscircle(0,0){.25}
	\pscircle*(1,0){.1} \pscircle(1,0){.25}
	\pscircle*(2,0){.1}
	\pscircle*(4,0){.1}
	\pscircle*(5,0){.1}
	\pscircle*(6,0){.1}
	\psline(0,0)(2.3,0)
	\psline[linestyle=dotted](2.3,0)(3.7,0)
	\psline(3.7,0)(5,0)
	\psline(4.9,0.06)(6.1,0.06)
	\psline(4.9,-0.06)(6.1,-0.06)
	\psline(5.25,0.3)(5.65,0)(5.25,-0.3)
\end{pspicture}
\end{center}
\caption{Moufang quadrangles parametrized by quadratic forms (odd dimension)}
\end{figure}
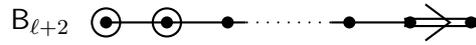

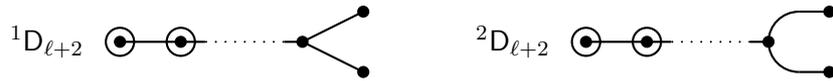
\begin{figure}[ht!]
\begin{center}
\begin{pspicture}(-1.5,0.0)(4,0.5)
	\psset{unit=.8}
	\rput[r](-.6,0){$^1\mathsf{D}_{\ell+2}$}
	\pscircle*(0,0){.1} \pscircle(0,0){.25}
	\pscircle*(1,0){.1} \pscircle(1,0){.25}
	\pscircle*(3,0){.1}
	\pscircle*(4,0.5){.1}
	\pscircle*(4,-0.5){.1}
	\psline(0,0)(1.4,0)
	\psline[linestyle=dotted](1.4,0)(2.7,0)
	\psline(2.7,0)(3,0)
	\psline(4,-0.5)(3,0)(4,0.5)
\end{pspicture}
\begin{pspicture}(-2,0.0)(3.5,0.5)
	\psset{unit=.8}
	\rput[r](-.6,0){$^2\mathsf{D}_{\ell+2}$}
	\pscircle*(0,0){.1} \pscircle(0,0){.25}
	\pscircle*(1,0){.1} \pscircle(1,0){.25}
	\pscircle*(3,0){.1}
	\pscircle*(4,0.5){.1}
	\pscircle*(4,-0.5){.1}
	\psline(0,0)(1.4,0)
	\psline[linestyle=dotted](1.4,0)(2.7,0)
	\psline(2.7,0)(3,0)
	\psarc(3.5,0){0.5}{90}{-90}
	\psline(3.5,.5)(4,.5)
	\psline(3.5,-.5)(4,-.5)
\end{pspicture}
\end{center}
\caption{Moufang quadrangles parametrized by quadratic forms (even dimension)}
\end{figure}

\paragraph{Moufang quadrangles of involutory type}

Moufang quadrangles of involutory type are determined by a (skew) field $K$ equipped%
\footnote{If $\Char(K) = 2$, some more data are required, but this is not relevant for our purposes.}
with an involution $\sigma$.
The root groups $U_2$ and $U_4$ are parametrized by $(K, +)$ and the root groups $U_1$ and $U_3$ are parametrized by $(\Fix_K(\sigma), +)$.
If $K$ is finite-dimensional over its center, of degree $d$, then these Moufang quadrangles arise from algebraic groups;
they are of absolute type $^2\mathsf{A}_{4d-1}$ if the involution is of the second kind,
and they are of absolute type $^1\mathsf{D}_{2d}$ if the involution is of the first kind.

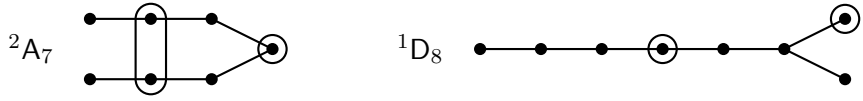
\begin{figure}[ht!]
\begin{center}
\begin{pspicture}(-1.5,0.0)(3,1)
	\psset{unit=.8}
	\rput[r](-.6,0){$^2\mathsf{A}_{7}$}
	\pscircle*(0,-.5){.1} \pscircle*(0,.5){.1}
	\pscircle*(1,-.5){.1} \pscircle*(1,.5){.1}
	\pscircle*(2,-.5){.1} \pscircle*(2,.5){.1}
	\pscircle*(3,0){.1} \pscircle(3,0){.25}
	\psline(0,-.5)(2,-.5)
	\psline(0,.5)(2,.5)
	\psline(2,-0.5)(3,0)(2,0.5)
	\psarc(1,0.5){0.25}{0}{180}
	\psarc(1,-0.5){0.25}{180}{0}
	\psline(1.25,0.5)(1.25,-0.5)
	\psline(0.75,0.5)(0.75,-0.5)
\end{pspicture}
\begin{pspicture}(-2,0.0)(5.5,1)
	\psset{unit=.8}
	\rput[r](-.6,0){$^1\mathsf{D}_{8}$}
	\pscircle*(0,0){.1}
	\pscircle*(1,0){.1}
	\pscircle*(2,0){.1}
	\pscircle*(3,0){.1} \pscircle(3,0){.25}
	\pscircle*(4,0){.1}
	\pscircle*(5,0){.1}
	\pscircle*(6,0.5){.1} \pscircle(6,0.5){.25}
	\pscircle*(6,-0.5){.1}
	\psline(0,0)(5,0)
	\psline(6,-0.5)(5,0)(6,0.5)
\end{pspicture}
\end{center}
\caption{Some Moufang quadrangles of involutory type}
\end{figure}

\paragraph{Moufang quadrangles of pseudo-quadratic form type}

Moufang quadrangles of pseudo-quadratic form type are determined%
\footnote{Again, the situation is slightly more complicated in characteristic two, but we omit the details.}
by an anisotropic pseudo-quadratic space.
\begin{definition}[{\cite[Definition 1.16]{W}}]\label{def:pqs}
A pseudo-quadratic space is a quintuple $(L,\si,X,h,\pi)$ where
\begin{compactenum}[(i)]
    \item
	$L$ is a skew field;
    \item
	$\si$ is an involution of $L$, and
	we let $K := \Fix_L(\sigma)$;
    \item
	$X$ is a right vector space over $L$;
    \item
	$h \colon X\times X \rightarrow L$ is a {\em skew-hermitian form}, i.e.
	\begin{compactitem}
	\item $h$ is bi-additive and $h(x,yu)=h(x,y)u$,  and
	\item $h(x,y)^\si=-h(y,x)$,
	\end{compactitem}
	for all $x,y \in X$ and all $u\in L$;
    \item
	$\pi$ is a {\em pseudo-quadratic form} from $X$ to~$L$, i.e.
	\begin{compactitem}
	\item $\pi(x+y)\equiv \pi(x)+\pi(y)+h(x,y) \mod K$, and
	\item $\pi(xu)\equiv u^\si\pi(x) u \mod K$,
	\end{compactitem}
	for all $x,y \in X$ and all $u \in L$.
\end{compactenum}
A pseudo-quadratic space $(L,\si,X,h,\pi)$ is called {\em anisotropic} if
\[\pi(x)\equiv0 \mod K \ \text{ only if }x=0.\]
\end{definition}
\begin{remark}
	If $\Char(L) \neq 2$, then the pseudo-quadratic form $\pi$ is completely determined (modulo $K$) by the skew-hermitian form $h$.
	We have nevertheless decided to include the pseudo-quadratic form in the definition,
	because this form will play an important role in the sequel.
\end{remark}

The root groups $U_2$ and $U_4$ are parametrized by $(L, +)$; the root groups $U_1$ and $U_3$ are parametrized by
some non-abelian group with underlying set $X \times K$, and both the group operation and the commutator relations
involve the maps $h$ and $\pi$.
If $L$ is finite-dimensional over its center, of degree $d$, and $X$ is finite-dimensional over $L$,
then these Moufang quadrangles arise from algebraic groups.
If the involution is of the second kind, they are of absolute type $^2\mathsf{A}_{\ell}$.
If the involution is of the first kind, they are of absolute type $\mathsf{C}_{\ell}$, $^1\mathsf{D}_{\ell}$ or $^2\mathsf{D}_{\ell}$.

\begin{figure}[ht!]
\begin{center}
\begin{pspicture}(-1.5,-.5)(4.5,1)
	\psset{unit=.8}
	\rput[r](-.6,0){$^2\mathsf{A}_{5}$}
	\pscircle*(0,-.5){.1} \pscircle*(0,.5){.1}
	\pscircle*(1,-.5){.1} \pscircle*(1,.5){.1}
	\pscircle*(2,0){.1}
	\psline(0,-.5)(1,-.5)
	\psline(0,.5)(1,.5)
	\psline(1,-0.5)(2,0)(1,0.5)
	\psarc(1,0.5){0.25}{0}{180}
	\psarc(1,-0.5){0.25}{180}{0}
	\psline(1.25,0.5)(1.25,-0.5)
	\psline(0.75,0.5)(0.75,-0.5)
	\psarc(0,0.5){0.25}{0}{180}
	\psarc(0,-0.5){0.25}{180}{0}
	\psline(0.25,0.5)(0.25,-0.5)
	\psline(-.25,0.5)(-.25,-0.5)
\end{pspicture}
\begin{pspicture}(-1,-.5)(4,0.5)
	\psset{unit=.8}
	\rput[r](-.6,0){$\mathsf{C}_{4}$}
	\pscircle*(0,0){.1}
	\pscircle*(1,0){.1} \pscircle(1,0){.25}
	\pscircle*(2,0){.1}
	\pscircle*(3,0){.1} \pscircle(3,0){.25}
	\psline(0,0)(2,0)
	\psline(1.9,0.06)(3.1,0.06)
	\psline(1.9,-0.06)(3.1,-0.06)
	\psline(2.75,0.3)(2.35,0)(2.75,-0.3)
\end{pspicture}

\begin{pspicture}(-1.5,0.0)(4.5,1)
	\psset{unit=.8}
	\rput[r](-.6,0){$^1\mathsf{D}_{6}$}
	\pscircle*(0,0){.1}
	\pscircle*(1,0){.1} \pscircle(1,0){.25}
	\pscircle*(2,0){.1}
	\pscircle*(3,0){.1} \pscircle(3,0){.25}
	\pscircle*(4,0.5){.1}
	\pscircle*(4,-0.5){.1}
	\psline(0,0)(3,0)
	\psline(4,-0.5)(3,0)(4,0.5)
\end{pspicture}
\begin{pspicture}(-1,0.0)(4,1)
	\psset{unit=.8}
	\rput[r](-.6,0){$^2\mathsf{D}_{6}$}
	\pscircle*(0,0){.1}
	\pscircle*(1,0){.1} \pscircle(1,0){.25}
	\pscircle*(2,0){.1}
	\pscircle*(3,0){.1} \pscircle(3,0){.25}
	\pscircle*(4,0.5){.1}
	\pscircle*(4,-0.5){.1}
	\psline(0,0)(3,0)
	\psarc(3.5,0){0.5}{90}{-90}
	\psline(3.5,.5)(4,.5)
	\psline(3.5,-.5)(4,-.5)
\end{pspicture}
\end{center}
\caption{Some Moufang quadrangles of pseudo-quadratic form type}
\end{figure}
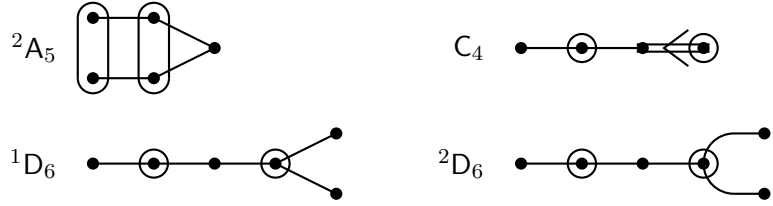

\paragraph{Moufang quadrangles of type $E_6$, $E_7$ and $E_8$}

Moufang quadrangles of type $E_6$, $E_7$ and $E_8$ are exceptional, and always arise from algebraic groups.
The explicit construction of these Moufang quadrangles is very complicated, and one of the goals of our paper is
precisely to get a better understanding of these exceptional quadrangles.
We refer to \cite[Chapter 12 and 13]{TW} or \cite[Chapter 10]{W}, or also to \cite[section 4.3.5]{DV}, for the precise construction;
many of its properties will be captured in the definition of a quadrangular algebra that we will recall in section~\ref{ss:QA} below.
In section~\ref{par: E6E7E8}, we will give the necessary details to make the connection with structurable algebras.

The corresponding Tits indices are as follows.
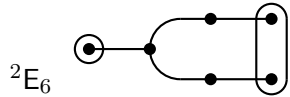
\begin{figure}[ht!]
\begin{center}
\begin{pspicture}(-1.5,-0.3)(3.5,0.6)
	\psset{unit=.8}
	\rput[r](-.6,-.5){$^2\mathsf{E}_6$}
	\pscircle*(0,0){.1} \pscircle(0,0){.25}
	\pscircle*(1,0){.1}
	\pscircle*(2,.5){.1}
	\pscircle*(2,-.5){.1}
	\pscircle*(3,.5){.1}
	\pscircle*(3,-.5){.1}
	\psline(0,0)(1,0)
	\psarc(1.5,0){0.5}{90}{-90}
	\psline(1.5,0.5)(3,0.5)
	\psline(1.5,-0.5)(3,-0.5)
	\psarc(3,0.5){0.25}{0}{180}
	\psarc(3,-0.5){0.25}{180}{0}
	\psline(3.25,0.5)(3.25,-0.5)
	\psline(2.75,0.5)(2.75,-0.5)
\end{pspicture}
\end{center}
\caption{Moufang quadrangles of type $E_6$\label{fig:E6}}
\end{figure}

\begin{figure}[ht!]
\begin{center}
\begin{pspicture}(-1,0.2)(4,1.0)
	\psset{unit=.8}
	\rput[r](-.6,0){$\mathsf{E}_7$}
	\pscircle*(0,0){.1} \pscircle(0,0){.25}
	\pscircle*(1,0){.1}
	\pscircle*(2,0){.1}
	\pscircle*(3,0){.1}
	\pscircle*(4,0){.1} \pscircle(4,0){.25}
	\pscircle*(5,0){.1}
	\pscircle*(2,1){.1}
	\psline(0,0)(5,0)
	\psline(2,0)(2,1)
\end{pspicture}
\end{center}
\caption{Moufang quadrangles of type $E_7$\label{fig:E7}}
\end{figure}

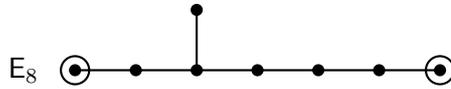
\begin{figure}[ht!]
\begin{center}
\begin{pspicture}(-1,0.2)(4,1.0)
	\psset{unit=.8}
	\rput[r](-.6,0){$\mathsf{E}_8$}
	\pscircle*(0,0){.1} \pscircle(0,0){.25}
	\pscircle*(1,0){.1}
	\pscircle*(2,0){.1}
	\pscircle*(3,0){.1}
	\pscircle*(4,0){.1}
	\pscircle*(5,0){.1}
	\pscircle*(6,0){.1} \pscircle(6,0){.25}
	\pscircle*(2,1){.1}
	\psline(0,0)(6,0)
	\psline(2,0)(2,1)
\end{pspicture}
\end{center}
\caption{Moufang quadrangles of type $E_8$}
\end{figure}

%%%%%%%%%%%%%%%%%%%%%%%%%%%%%%%%%%%%%%%%%%%%%%%%%%%%%%%%%%%%%%%%%%%%%%%%%%%%%%%%%%%%%%%%%%%%%%%%%%%%%%%%%%%%%%%%%%
%                                                                                                                %
%  SECTION : QUADRANGULAR ALGEBRAS                                                                               %
%                                                                                                                %
%%%%%%%%%%%%%%%%%%%%%%%%%%%%%%%%%%%%%%%%%%%%%%%%%%%%%%%%%%%%%%%%%%%%%%%%%%%%%%%%%%%%%%%%%%%%%%%%%%%%%%%%%%%%%%%%%%
\section{Quadrangular algebras, \FTS s and structurable algebras}\label{se:algknown}

In this section, we assemble some known facts about the different kinds of algebraic structures that we will need,
namely quadrangular algebras, \FTS s and structurable algebras.

\subsection{Quadrangular algebras}\label{ss:QA}

In order to try to have a better understanding of the exceptional Moufang quadrangles, i.e.\@ those of type $E_6$, $E_7$, $E_8$ and $F_4$,
Richard Weiss designed an algebraic structure, called a {\em quadrangular algebra} \cite{W}.
These algebras are a generalization of (some) pseudo-quadratic spaces, where, in some sense, the structure of the underlying skew field is lost,
but is replaced by some weaker identities, comparable to the replacement of the associativity by some weaker identities in the definition of
an alternative algebra.
Unfortunately, the definition involves a large number of maps and identities between these maps, that make it rather difficult to have
a deep understanding of the corresponding structures.
Moreover, giving an {\em explicit} construction of the quadrangular algebras related to the exceptional Moufang quadrangles,
involves a very delicate way of introducing coordinates and defining these maps in terms of these coordinates.
We hope that our understanding in terms of structurable algebras will eventually allow to give an explicit construction
avoiding these coordinates, and thereby giving much more insight.

The definition of quadrangular algebras is significantly simpler over fields of characteristic different from $2$;
since this is the only case we will be dealing with in this paper, we will restrict to this case,
and we refer the interested reader to \cite{W} for the general definition.

\begin{definition}\label{def:quad}
A {\em quadrangular algebra} of characteristic different from $2$ is an $8$-tuple $(K,L,q,1,X,\cdot,h,\theta)$, where
\begin{compactenum}[(i)]
    \item
	$K$ is a commutative field with $\Char(K) \neq 2$,
    \item
	$L$ is a $K$-vector space,
    \item
	$q$ is an anistropic quadratic form from $L$ to $K$,
    \item
	$1 \in L$ is a {\em base point} for $q$, i.e.\@ an element such that $q(1) = 1$,
    \item
	$X$ is a non-trivial $K$-vector space,
    \item
	$(a,v) \mapsto a \cdot v$ is a map from $X \times L$ to $X$ (usually denoted simply by juxtaposition),
    \item
	$h$ is a map from $X \times X$ to $L$, and
    \item
	$\theta$ is a map from $X \times L$ to $L$,
\end{compactenum}
satisfying the following axioms, where
\begin{align*}
	&f \colon L \times L \to K \colon (a,b) \mapsto f(a,b) := q(a+b) - q(a) - q(b) \,; \\
	&\sigma \colon L \to L \colon v \mapsto f(1,v) 1 - v \,; \\
	&v^{-1} := v^\sigma / q(v) \,.
\end{align*}
\begin{compactitem}
    \item[(A1)]
	The map $\cdot$ is $K$-bilinear.
    \item[(A2)]
	$a \cdot 1 = a$ for all $a \in X$.
    \item[(A3)]
	$(av)v^{-1} = a$ for all $a \in X$ and all $v \in L^*$.
    \medskip
    \item[(B1)]
	$h$ is $K$-bilinear.
    \item[(B2)]
	$h(a,bv)=h(b,av)+f(h(a,b),1)v$ for all $a,b \in X$ and all $v \in L$.
    \item[(B3)]
	$f(h(av,b),1) = f(h(a,b),v)$ for all $a,b \in X$ and all $v \in L$.
   \medskip
    \item[(C)]
	$\theta(a,v) = \tfrac{1}{2} h(a, av)$.
    \medskip
    \item[(D1)]
	Let $\pi(a) = \theta(a,1)$ for all $a \in X$. Then $a \theta(a, v) = (a\pi(a))v$
	for all $a \in X$ and all $v \in L$.
    \item[(D2)]
	$\pi(a) \equiv 0 \pmod{K}$ if and only if $a = 0$ (where $K$ has been identified
	with its image under the map $t \mapsto t\cdot 1$ from $K$ to $L$).
\end{compactitem}
Moreover, we define a map $g \colon X \times X \to K$ by
\[ g(a,b) := \tfrac{1}{2} f(h(a,b), 1) \]
for all $a,b \in X$.
\end{definition}

Quadrangular algebras have been classified by Richard Weiss, and over fields of characteristic different from two, the
result can be summarized as follows.
\begin{theorem}[\cite{W}]
	Let $\Omega$ be a quadrangular algebra over a field $K$ with $\Char(K) \neq 2$.
	Then either $\Omega$ is an anisotropic pseudo-quadratic space over a quadratic extension $E/K$ or over a quaternion division algebra $Q/K$
	equipped with the standard involution,
	or $\Omega$ it is of type $E_6, E_7$ or $E_8$. 
\end{theorem}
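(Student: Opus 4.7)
The plan is to give a high-level reconstruction of Weiss's classification, which occupies the bulk of the monograph \cite{W}. The strategy is to extract enough algebraic structure from the axiom system to reduce to a short list of cases, and then to match each case either with an anisotropic pseudo-quadratic space or with one of the three exceptional types.

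First, I would exploit axioms (A1)--(A3) to extract a near-multiplicative structure on $L$. The invertibility identity $(av)v^{-1}=a$ together with $K$-bilinearity means that the operators $R_v \colon a \mapsto av$ on $X$ satisfy $R_v R_{v^{-1}} = \id_X$, so they generate a group-like subset of $\End_K(X)$. Combined with (B1)--(B3), which link $h$ to the operators $R_v$ through $f$ and $\sigma$, one finds that $L$ inherits a natural product under which $q$ plays the role of a multiplicative norm and $\sigma \colon v \mapsto f(1,v)1 - v$ is the associated standard involution. In particular, $L$ looks like (a subspace of) a composition algebra, and $(L,q,\sigma)$ is strongly constrained.

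Second, I would use (C), (D1), (D2) to refine this picture. Axiom (C) identifies $\theta$ as a symmetric half of $h$, so $\pi(a) = \tfrac12 h(a,a)$, while (D1) reads $a\,\theta(a,v) = (a\pi(a))\,v$: this is a cubic identity stating that $\pi(a) \in L$ commutes with the action of $L$ on $X$ up to the expected multiplication, and is characteristic of Jordan- or Freudenthal-type structures. Axiom (D2) is the anisotropy condition. Together, (D1) and (D2) severely restrict the isometry class of $q$ and the dimension of $X$ relative to $L$.

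Third, I would split into two regimes. In the \emph{associative} regime, where the induced product on $L$ is associative, $L$ must be either a quadratic separable extension $E/K$ or a quaternion division algebra $Q/K$, each equipped with its standard involution. In this case I would translate $(X,\cdot,h,\pi)$ directly into the data $(L,\sigma,X,h,\pi)$ of an anisotropic pseudo-quadratic space as in Definition~\ref{def:pqs}: the skew-hermitianity of $h$ follows from (B1)--(B3) once the involution is identified, the pseudo-quadratic congruences for $\pi$ follow from (C) and (D1) modulo $K = \Fix_L(\sigma)$, and anisotropy is exactly (D2). In the \emph{non-associative} regime, (D1) forces $q$ to be a Pfister-neighbor form of a very specific kind; a careful dimension count together with the cubic identity derived from (D1) and the norm-like behavior of $q$ restricts $\dim_K L$ to three values, producing exactly the quadrangular algebras of type $E_6$, $E_7$, and $E_8$ as constructed in \cite[Chapter 13]{TW}.

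The main obstacle is the non-associative regime: one must show that the admissible quadratic forms $q$ are of precisely one of three Pfister-neighbor types, and that in each case the remaining data $(X,\cdot,h,\theta)$ is essentially uniquely determined by $(K,L,q,1)$. This requires delicate case-by-case analysis of the interaction between $\pi$, $h$, $q$ and the associated bilinear form $f$, and is not reducible to general algebra; it is essentially the content of \cite[Chapters 5--10]{W}, which I would invoke rather than re-derive from scratch.
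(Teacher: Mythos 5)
The paper offers no argument of its own for this statement: its proof is the single citation \cite[Theorem 3.1, Theorem 3.2 and Proposition 3.14]{W}, and your proposal ends the same way, deferring the case analysis to \cite[Chapters 5--10]{W}. That deferral is legitimate, but the sketch you place in front of the citation contains steps that are false precisely for the algebras the theorem is about, so it cannot stand as an outline of the proof. Axioms (A1)--(A3) do not endow $L$ with a product for which $q$ is a multiplicative norm; what they say (see Remark~\ref{rem:module}) is that $X$ is a module over the Clifford algebra $C(q,1)$ with base point. If your first step were correct, $L$ would sit inside a composition algebra, forcing $\dim_K L \le 8$ with the $8$-dimensional case a Pfister (octonion norm) form; but the exceptional quadrangular algebras have $\dim_K L = 6, 8, 12$, and a form of type $E_7$ is, by the condition $s_2s_3s_4 \notin N(E)$ in Definition~\ref{def: typeE8}, not similar to a $3$-fold Pfister form --- in particular it is not a Pfister neighbor, so your description of the ``non-associative regime'' is also inaccurate. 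The actual dividing line in \cite{W} is not associativity of a product on $L$ (there is no such product in general) but the dichotomy between special (pseudo-quadratic) and exceptional quadrangular algebras, detected through the identities satisfied by $h$ and $\theta$ together with the $C(q,1)$-module structure; the admissible exceptional forms are then characterized by norm splittings as in Definition~\ref{def: typeE8}, not by Pfister-neighborhood, and the restriction $\dim_K L \in \{2,4,6,8,12\}$ comes out of that identity analysis rather than a composition-algebra dimension count.

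A complete argument must also explain why nothing else occurs --- in particular why the quadrangular algebras of type $F_4$ and the defective examples, which exist only in characteristic $2$, are excluded here by the hypothesis $\Char(K) \neq 2$; your sketch never addresses this. So while your ultimate appeal to Weiss's monograph coincides with what the paper itself does, the intermediate reduction you propose is not a faithful account of that classification and would already break down at its first step.
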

\begin{proof}
	See \cite[Theorem 3.1, Theorem 3.2 and Proposition 3.14]{W}.
\end{proof}

\begin{remark}\label{rem:module}
	If $q$ is a quadratic form from $L$ to $K$, with base point $1 \in L$, then the {\em Clifford algebra of $q$ with basepoint $1$} is
	defined as
	\[ C(q, 1) := T(L) / \langle u \otimes u^\sigma - q(u) \cdot 1 \rangle , \]
	where $T(L)$ is the tensor algebra of $L$, and where $\sigma$ is defined as in Definition~\ref{def:quad}.
	In \cite[(12.51)]{TW} it is shown that $C(q, 1) \cong C_0(q)$, the even Clifford algebra of $q$.
	The notion of a Clifford algebra with base point was introduced by Jacobson and McCrimmon in \cite{JM};
	see also \cite[Chapter 12]{TW} for more details.
	Since $q$ is anisotropic, axioms (A1)--(A3) say precisely that $X$ is a $C(q, 1)$-module.
\end{remark}

\begin{remark}
The definition of $g$ we have used is as in \cite{W}.
This is {\em not} the same definition as in \cite[Chapter 13]{TW}, where
$g(a,b)=\tfrac{1}{2}f(h(b,a),1)=-\tfrac{1}{2}f(h(a,b),1)$.
See also \cite[Remark (viii), p.~7]{W}.
\end{remark}

\begin{remark}
	The definition of quadrangular algebras over fields $K$ with $\Char(K) = 2$ involves four more axioms (C1)--(C4) which replace the axiom (C)
	in Definition~\ref{def:quad} above (and define the map $g$ in a different way).
	One of these axioms, (C4), is considerably more complicated than the other axioms, but
	it can be shown that the axioms (C1)--(C4) are superfluous over fields $K$
	with $\Char(K) \neq 2$ in the sense that axiom (C) implies (C1)--(C4).
	See \cite[Remark 4.8]{W} for more details.
\end{remark}

We will use the following formulas in the sequel.
\begin{theorem}\label{eign:quadr} 
Let $(K,L,q,1,X,\cdot,h,\theta)$ be a quadrangular algebra, with $\Char(K)\neq2$.
For all $a,b\in X$ and all $u,v\in L$ we have that
\begin{compactenum}[\rm (i)]\itemsep.5ex
\item $h(a,b)=-h(b,a)^\si$, % \textup{ (3.6)}
\item $f(h(a,bv),1)=f(h(a,b),v^\si)$, % \textup{ (3.7)}
\item $(au)v=-(av^\si)u^\si+af(u,v^\si)$, % \textup{ (3.8)}
\item $h(a\pi(a),b)+\theta(a,h(a,b))=0$,
\item $\theta(av,w) = \theta(a,w^\sigma)^\sigma q(v) - f(w,v^\sigma)\theta(a,v)^\sigma + f(\theta(a,v),w^\sigma)v^\sigma$.
\end{compactenum}

\end{theorem}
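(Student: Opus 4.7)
The strategy is to chain the axioms carefully, using as key auxiliary facts that $\sigma$ is an involution of $L$ fixing $1$ (since $f(1,1)=2q(1)=2$) and that $f(v^\sigma,w) = f(v,w^\sigma)$ for all $v,w \in L$; both are direct consequences of $v^\sigma = f(1,v)\cdot 1 - v$ and the symmetry of $f$.

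For (i), I would apply (B2) with $v=1$ to get $h(a,b) - h(b,a) = f(h(a,b),1)\cdot 1$; swapping $a,b$ and combining gives $f(h(a,b),1) = -f(h(b,a),1)$, and then rearranging yields $h(a,b) = h(b,a) - f(1,h(b,a))\cdot 1 = -h(b,a)^\sigma$. For (ii), rewrite $h(a,bv) = -h(bv,a)^\sigma$ using (i), apply $f(\cdot,1)$ (noting $1^\sigma = 1$), then use (B3), and finally (i) together with the $\sigma$-orthogonality of $f$:
\[
f(h(a,bv),1) = -f(h(bv,a),1) = -f(h(b,a),v) = f(h(a,b)^\sigma, v) = f(h(a,b), v^\sigma).
\]
For (iii), polarize (A3), written as $(av)v^\sigma = q(v)a$: substituting $v \to u+v$ and expanding with (A1) produces $(au)v^\sigma + (av)u^\sigma = f(u,v)a$, and replacing $v$ by $v^\sigma$ (using $\sigma^2 = \mathrm{id}$) gives exactly (iii). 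Equivalently, (iii) is the Clifford relation in the $C(q,1)$-module $X$ of Remark~\ref{rem:module}.

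For (iv), the plan is to reduce to an identity involving $f$ and invoke non-degeneracy of $f$ (valid since $q$ is anisotropic and $\Char K \neq 2$). The crucial preliminary is
\[
f(\theta(a,v),w) = -f(\theta(a,w),v) \qquad \text{for all } v,w \in L,
\]
which I would prove by applying (C), (B3), (i), (B3), and (C) in succession:
\[
2f(\theta(a,v),w) = f(h(a,av),w) = f(h(aw,av),1) = -f(h(av,aw),1) = -f(h(a,aw),v) = -2f(\theta(a,w),v).
\]
Then two applications of (B3) sandwiching (D1) give
\[
f(h(a\pi(a),b),v) = f(h((a\pi(a))v,b),1) = f(h(a\,\theta(a,v),b),1) = f(h(a,b),\theta(a,v)),
\]
and combining this with the skew-symmetry and the symmetry of $f$ yields $f(h(a\pi(a),b),v) = -f(\theta(a,h(a,b)),v)$ for every $v \in L$. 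Non-degeneracy of $f$ then gives (iv).

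For (v), I would use (iii) to expand $(av)w = -(aw^\sigma)v^\sigma + a\,f(v,w^\sigma)$ and substitute into $\theta(av,w) = \tfrac{1}{2} h(av,(av)w)$, splitting it as $-\tfrac{1}{2} h(av,(aw^\sigma)v^\sigma) + \tfrac{1}{2} f(v,w^\sigma)\,h(av,a)$. The second piece becomes $-f(w,v^\sigma)\theta(a,v)^\sigma$ using (i), (C) and the $\sigma$-orthogonality of $f$. For the first piece, (B2) produces $h(av,(aw^\sigma)v^\sigma) = h(aw^\sigma,(av)v^\sigma) + f(h(av,aw^\sigma),1)\,v^\sigma$; then (A3) replaces $(av)v^\sigma$ by $q(v)a$, while (B3) together with (i) evaluates $f(h(av,aw^\sigma),1) = -2f(\theta(a,v),w^\sigma)$, so the first piece becomes $\theta(a,w^\sigma)^\sigma q(v) + f(\theta(a,v),w^\sigma)\,v^\sigma$. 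Adding the two contributions is exactly (v). The main obstacle is (iv), since its argument requires spotting that one should test the desired equality against every $v\in L$ via $f$ and then prove the skew-symmetry lemma for $\theta$; by comparison, (v) is the most calculation-heavy but is routine once (i)--(iii) are in hand.
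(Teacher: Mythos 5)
Your proposal is correct: each of the five chains checks out against axioms (A1)--(A3), (B1)--(B3), (C), (D1), the auxiliary facts $1^\sigma=1$, $\sigma^2=\mathrm{id}$, $f(u^\sigma,w)=f(u,w^\sigma)$ are genuine consequences of the definition of $\sigma$, and the non-degeneracy of $f$ used in (iv) does follow from anisotropy of $q$ in odd characteristic. The route is, however, quite different from the paper's, whose ``proof'' is essentially a list of citations: (i)--(iii) are quoted as \cite[(3.6), (3.7), (3.8)]{W}, (iv) as identity (e) in the proof of \cite[(13.67)]{TW}, and (v) as axiom (C4) of the characteristic-free definition in \cite[Definition 1.17]{W}, which is redundant in characteristic $\neq 2$ by \cite[Remark 4.8, Proposition 4.5]{W}. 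So what you have done is reconstruct self-contained derivations of these imported facts: your polarization of (A3) is the standard Clifford-module argument behind (iii); your proof of (iv) --- testing $h(a\pi(a),b)+\theta(a,h(a,b))$ against $f(\cdot,v)$, sandwiching (D1) between two applications of (B3), and using the skew-symmetry $f(\theta(a,v),w)=-f(\theta(a,w),v)$ --- is essentially the argument the paper points to in \cite{TW} (and which appears, commented out, in the authors' source, there with the skew-symmetry step cited from \cite{W}); and your computation for (v) amounts to a direct verification that (C) implies (C4) in odd characteristic, a fact the paper takes from \cite{W} without proof. The paper's approach buys brevity at the cost of opacity; yours makes the theorem self-contained from Definition~\ref{def:quad}, at the cost of having to establish the auxiliary skew-symmetry of $f(\theta(a,\cdot),\cdot)$, which you do correctly.
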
 
\begin{proof}
Identities (i)--(iii) are precisely \cite[(3.6), (3.7) and (3.8)]{W}.
Identity (iv) is identity (e) in the proof of \cite[(13.67)]{TW}; the proof holds without any change in the pseudo-quadratic case as well.
Identity (v) is precisely axiom (C4) in \cite[Definition 1.17]{W}, taking into account that the map $\phi$ occuring in this axiom is trivial
by \cite[Proposition 4.5]{W}.
\end{proof}

\begin{comment}Let $v\in L$, we have that
\begin{align*}
f(h(a\pi(a),b),v)&=f(h(a\pi(a)v,b),1)\\
&=f(h(a\theta(a,v),b),1)\\
&=f(h(a,b),\theta(a,v))\\
&=-f(\theta(a,h(a,b)),v)(4.9.iii)
\end{align*}
Since this equality holds for arbitrary $v\in L$ we have that $h(a\pi(a),b)+\theta(a,h(a,b))=0$.
\end{comment}

\subsection{\FTS s}\label{ss:FTS}

\begin{definition}\label{def: FTS}
A {\em Freudenthal triple system} $(V,b,t)$ is a vector space $V$ over a field $K$ of characteristic not 2 or 3, endowed with a trilinear symmetric product 
\[ t \colon V\times V\times V \rightarrow V \colon (x,y,z) \mapsto t(x,y,z) =: xyz \]
and a skew symmetric bilinear form
\[ b \colon V\times V\rightarrow K \colon (x,y)\mapsto b(x,y) =: \langle x,y \rangle \]
such that
\begin{compactenum}[(i)]
\item the map $(x,y,z,w)\mapsto\langle x,yzw\rangle$ is a nonzero symmetric 4-linear form;
\item$(xxx)xy= \langle y,x\rangle xxx+\langle y,xxx\rangle x\quad \forall x,y\in V$.
\end{compactenum}
When it is clear which triple product and skew symmetric form are considered, we do not explicitly mention $b$ and $t$,
but we use juxtaposition and $\langle.,.\rangle$ instead.
\end{definition}

\begin{definition}\label{def: similarity}
Two \FTS s $(V,b,t)$, $(V',b',t')$ over a field $K$ are {\em similar} if there exists a $K$-vector space isomorphism $\psi:V\rightarrow V'$
and $\lambda \in K^*$ such that
\[t'(\psi(x),\psi(y),\psi(z))=\la \psi(t(x,y,z)).\]
In \cite[Lemma 6.6]{F} it is proven that this condition is equivalent with
\[
\begin{cases}
	b'(\psi(x),\psi(y)) = \la b(x,y) \ \text{ and } \\
	b'(\psi(x),t'(\psi(x),\psi(x),\psi(x)))=\la^2 b(x,t(x,x,x)).
\end{cases}
\]
The map $\psi$ is then called a {\em similarity} with {\em multiplier} $\lambda$.
We say that two \FTS s are {\em isometric} if they are similar with $\la=1$; in this case $\psi$ is called an {\em isometry}.
\end{definition}

\begin{definition}
Let $V$ be a Freudenthal triple system.
\begin{compactenum}[(i)]
\item An element $u\in V\setminus \{0\}$ is called {\em strictly regular} if $uVV\subseteq Ku$.
\item A pair of strictly regular elements $u_1, u_2$ is called {\em supplementary} if $\langle u_1,u_2\rangle=1$.
\item $V$ is called {\em reduced} if it contains a strictly regular element.
\item $V$ is called {\em simple} if it does not contain a proper ideal, i.e.\@ a subspace $I\neq{0},V$ such that $IVV\subseteq I$.
\end{compactenum}
\end{definition}

More details can be found in \cite{F}.
We mention a few results that we will use later.

\begin{lemma}\label{nonred and simple}
If the map $x\mapsto\langle x, xxx\rangle$ is anisotropic, the Freudenthal triple system is not reduced and simple.
\end{lemma}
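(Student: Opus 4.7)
The plan is to prove the two claims separately, both reducing to the anisotropy of $q(x) := \langle x, xxx\rangle$.

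For \emph{not reduced}, suppose toward contradiction that $u \in V \setminus \{0\}$ is strictly regular. Then $uVV \subseteq Ku$ gives $uuu = \la u$ for some $\la \in K$, and by skew-symmetry of $b$,
\[ \langle u, uuu\rangle = \la \langle u,u\rangle = 0. \]
Anisotropy forces $u = 0$, a contradiction.

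For \emph{simple}, suppose toward contradiction that $I$ is a proper nonzero ideal. First, $b$ is non-degenerate: any $x$ in its radical satisfies $\langle x, xxx\rangle = 0$, so $x=0$ by anisotropy. Next, the full symmetry of the $4$-linear form $Q(a_1,a_2,a_3,a_4) := \langle a_1, a_2 a_3 a_4\rangle$ (axiom (i)) shows that $I^\perp$ is also an ideal: for $y \in I^\perp$, $v,w \in V$ and $z \in I$,
\[ \langle yvw, z\rangle = Q(z,y,v,w) = Q(y,z,v,w) = \langle y, zvw\rangle = 0, \]
since $zvw \in I$. Moreover $I \cap I^\perp = 0$: any $x$ in the intersection satisfies $xxx \in I$ by the ideal property and $\langle x, xxx\rangle = 0$ by orthogonality, so anisotropy again kills $x$. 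Combined with non-degeneracy of $b$, this yields (in finite dimensions) the decomposition $V = I \oplus I^\perp$ with both summands nonzero.

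The payoff is the vanishing of mixed triple products: for $a,b \in I$ and $c \in I^\perp$, the element $abc$ lies in $I$ (since $I$ is an ideal) and in $I^\perp$ (since $I^\perp$ is an ideal), hence equals $0$; the same holds when two arguments lie in $I^\perp$ and one in $I$. Picking nonzero $x_1 \in I$ and $x_2 \in I^\perp$, setting $x := x_1 + x_2$ collapses $xxx$ to $x_1^3 + x_2^3$. Applying axiom (ii) with this $x$ and $y := x_1$, the left-hand side collapses to $(x_1^3)\, x_1\, x_1 = q(x_1)\, x_1$, whereas on the right $\langle x_1, x\rangle = 0$ and $\langle x_1, x_2^3\rangle = Q(x_1,x_2,x_2,x_2) = 0$ by symmetry, leaving $q(x_1)(x_1+x_2)$. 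Comparing gives $q(x_1)\, x_2 = 0$, hence $q(x_1) = 0$, and anisotropy forces $x_1 = 0$, the desired contradiction.

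I expect the main technical hurdle to be verifying that $I^\perp$ is itself an ideal and that the mixed triple products vanish; both rely essentially on the full symmetry of the $4$-linear form. Once that is in place, axiom (ii) provides a short and somewhat surprising collapse of terms that delivers the contradiction.
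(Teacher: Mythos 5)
Your argument for non-reducedness is exactly the paper's. For simplicity the two routes genuinely differ: the paper simply quotes Ferrar's criterion from \cite{F} (a \FTS\ is simple if and only if its bilinear form is nondegenerate) and then only checks nondegeneracy, whereas you prove simplicity from scratch, showing that $I^\perp$ is an ideal, that mixed triple products between $I$ and $I^\perp$ vanish, and then feeding $x=x_1+x_2$, $y=x_1$ into axiom (ii). Those computations are correct (the only blemish is a sign: $\langle yvw,z\rangle=-Q(z,y,v,w)$, harmless since both sides vanish), and a self-contained proof that avoids the reference to \cite{F} is a genuinely nice feature.

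The problem is your parenthetical ``(in finite dimensions)''. The lemma carries no dimension hypothesis, and the paper applies it in Theorem~\ref{th: FTS} to the \FTS\ on the module $X$ of an arbitrary quadrangular algebra; in the pseudo-quadratic case $X$ may be infinite-dimensional. In that generality your key step fails: nondegeneracy of $b$ together with $I\cap I^\perp=0$ does not give $V=I\oplus I^\perp$, and a proper subspace of an infinite-dimensional space can have trivial orthogonal complement even when $b$ is nondegenerate --- but your contradiction needs a nonzero $x_2\in I^\perp$. So, as written, you prove a weaker statement than the one the paper uses. The gap is repairable without orthogonal complements: since $\Char(K)\neq 2,3$ forces $|K|\geq 5$, axiom (ii) may be linearized once in $x$, giving for $u\in I$ and all $w,y\in V$
\[
(uuu)wy+3(uuw)uy=\langle y,w\rangle\, uuu+3\langle y,u\rangle\, uuw+3\langle y,uuw\rangle\, u+\langle y,uuu\rangle\, w .
\]
Every term except the last visibly lies in the ideal $I$, hence $\langle y,uuu\rangle w\in I$ for all $y,w$; taking $y=u\neq 0$ and using anisotropy of $x\mapsto\langle x,xxx\rangle$ gives $w\in I$ for every $w$, i.e.\@ $I=V$. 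This settles simplicity in arbitrary dimension (and in fact never uses nondegeneracy of $b$), so with this substitution your approach becomes a complete alternative to citing \cite{F}.
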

\begin{proof}
Suppose $u\in V$ is strictly regular, then $uuu=ku$ for some $k\in K$, so
\[\langle u,uuu\rangle=k\langle u,u\rangle=0.\]
This implies that $u=0$, so the Freudenthal triple system is not reduced.

In \cite{F} it is shown that a Freudenthal triple system is simple if and only if its bilinear form is nondegenerate.
This is clearly the case, since for every $x \in V \setminus \{ 0 \}$, we have $\langle x, y \rangle \neq 0$ for $y = xxx$.
\end{proof}

\begin{lemma}[{\cite[Corollary 3.4]{F}}]\label{lem: sre in F}
A simple \FTS\ $V$ is reduced if and only there exists $x\in V$ such that $\langle x,xxx\rangle=12 k^2$ for $k\in K^*$.

If this is the case, then
\[ u_1=\frac{1}{2}x +\frac{1}{12k}xxx, \qquad u_2=-\frac{1}{2k}x +\frac{1}{12k^2}xxx \]
is a pair of supplementary strictly regular elements. 
\end{lemma}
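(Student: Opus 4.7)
The plan is to prove each implication by a direct calculation with axiom (ii) and its first polarization; throughout, set $p := xxx$ and exploit $\langle v, v\rangle = 0$ coming from skew-symmetry.

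For the forward direction, suppose $V$ is reduced and fix a supplementary strictly regular pair $v_1, v_2$. Substituting $x := v_i$ into axiom (ii), together with $v_i v_i v_i \in K v_i$ and $\langle v_i, v_i\rangle = 0$, quickly yields $v_i v_i v_i = 0$. The first polarization of (ii) in $x$ (via $x \mapsto x + tz$ and extracting the coefficient of $t$), specialized at $x = v_i$ and using strict regularity once more, reduces to $v_i v_i w = 2\langle w, v_i\rangle v_i$ for all $w \in V$. Taking $x := v_1 + v_2$ and expanding $xxx$ trilinearly, the diagonal cubes vanish and the three cross terms reduce via the above to $xxx = -6 v_1 + 6 v_2$; hence $\langle x, xxx\rangle = 12\langle v_1, v_2\rangle = 12$, so the criterion holds with $k = 1$.

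For the reverse direction, suppose $\langle x, p\rangle = 12k^2$ with $k \in K^*$ and let $u_1, u_2$ be as in the statement. The relation $\langle u_1, u_2\rangle = 1$ follows at once from a bilinear expansion: the $\langle x, x\rangle$ and $\langle p, p\rangle$ terms vanish by skew-symmetry and the remaining terms combine to $\tfrac{1}{12k^2}\langle x, p\rangle = 1$. The core of the argument is the verification that $u_1$ (and symmetrically $u_2$) is strictly regular. One first computes three triple products inside $\Span(x, p)$: axiom (ii) with $y = x$ yields $pxx = 12k^2 x$; with $y = p$ it yields $xpp = -12k^2 p$; and the polarized form of (ii) specialized at $z = y = p$ then gives $ppp = -1296 k^4 x$ after substituting the first two. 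Plugging these into the trilinear expansion of $u_1 u_1 u_1$ produces exact cancellation, so $u_1 u_1 u_1 = 0$.

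To upgrade $u_1 u_1 u_1 = 0$ to the full condition $u_1 V V \subseteq K u_1$, one applies the polarized axiom (ii) with $x = u_1$; the vanishing of $u_1 u_1 u_1$ reduces it to
\[
(u_1 u_1 z)\, u_1 w \;=\; \langle w, u_1\rangle\, u_1 u_1 z \;+\; \langle w, u_1 u_1 z\rangle\, u_1
\]
for all $z, w \in V$. Combining this identity with the $4$-linear symmetry of axiom (i), the nondegeneracy of $\langle\cdot,\cdot\rangle$ (which holds since $V$ is simple), and a further polarization, one forces $u_1 u_1 z = 2\langle z, u_1\rangle u_1$ and then $u_1 v w \in K u_1$ for all $v, w$. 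The principal obstacle is precisely this last step: strict regularity is a bilinear condition in two free arguments whereas axiom (ii) admits only one, so iterated polarization combined with the $4$-linear symmetry of axiom (i) is required to kill the components transverse to $u_1$.
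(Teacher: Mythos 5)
The paper gives no argument for this lemma at all: its ``proof'' is the citation \cite[Corollary~3.4]{F}, so you are in effect reproving a quoted result. Your computational core is correct as far as it goes: axiom (ii) with $y=x$ and $y=p$ does give $pxx=12k^2x$ and $xpp=-12k^2p$, the linearization of (ii) then gives $ppp=-1296k^4x$, and with these the expansions of $\langle u_1,u_2\rangle$ and of $u_1u_1u_1$ do come out to $1$ and $0$ (I checked the coefficients). The problem is that everything after $u_1u_1u_1=0$ is asserted rather than proved, and it is asserted at exactly the point where the real content of Ferrar's corollary lies. The identity $(u_1u_1z)\,u_1w=\langle w,u_1\rangle u_1u_1z+\langle w,u_1u_1z\rangle u_1$ is a formal consequence of the axioms for \emph{every} element $u$ with $uuu=0$; but in a reduced simple \FTS\ (e.g.\ the $56$-dimensional one) there are plenty of ``rank two'' elements with $uuu=0$ that are not strictly regular and do not satisfy $uuz=2\langle z,u\rangle u$. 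Hence no amount of polarizing that identity, together with axiom (i) and nondegeneracy, can ``force'' $u_1u_1z=2\langle z,u_1\rangle u_1$, let alone $u_1VV\subseteq Ku_1$: any such derivation would prove a false general statement. To close the gap you must exploit the specific shape of $u_1$ beyond $u_1u_1u_1=0$, i.e.\ derive identities for $t(x,x,z)$, $t(x,p,z)$ and $t(p,p,z)$ with $z$ arbitrary from further linearizations of (ii) at the particular $x$ with $\langle x,xxx\rangle=12k^2$, and this --- which you yourself flag as ``the principal obstacle'' --- is exactly what is missing.

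The forward direction also has a genuine gap. Reducedness only provides a single strictly regular element, whereas you ``fix a supplementary strictly regular pair''; the existence of a second strictly regular element pairing to $1$ with the first is a nontrivial theorem in \cite{F} (it is what makes the statement a corollary there) and does not follow from the definitions. Granting such a pair, your computation $\langle x,xxx\rangle=12$ for $x=v_1+v_2$ is fine, but the grant is unjustified. A smaller issue in the same step: for a strictly regular $u$, writing $uuz=\mu_z u$, the linearized axiom (ii) only yields $\mu_z(\mu_y-2\langle y,u\rangle)=0$ for all $y,z$, so the degenerate alternative $uuV=0$ must be excluded before you may use $v_iv_iw=2\langle w,v_i\rangle v_i$. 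In short: the identities you computed are correct, but both implications still rest on unproved claims, and the polarization strategy you sketch for the central one cannot succeed in the form stated.
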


\subsection{Structurable algebras}\label{ss:str}

Structurable algebras have been introduced by B.~Allison \cite{A0}
and have been used in the construction of non-split exceptional simple Lie algebras, see for example \cite{A1}.

\begin{definition}A {\em structurable algebra} over a field $K$ of characteristic not~$2$ or $3$ is a unital, not necessarily associative $K$-algebra with involution\footnote{An involution is a $K$-linear map of order 2 such that $\overline{xy}=\overline{y}\,\overline{x}$.} $(\A,\bar{\ })$ such that
\begin{align}\label{struct id}
[V_{x,y},V_{z,w}]=&V_{\{x,y,z\},w}-V_{z,\{y,x,w\}}
\end{align}
for $x,y,z,w \in \A$ where $V_{x,y}z:=\{x,y,z\}:=(x\overline{y})z+ (z\overline{y})x-(z\overline{x})y$.

For all $x,y,z \in \A$, we write $U_{x,y}z:=V_{x,z}y$ and $ U_xy:=U_{x,x}y$.
We will refer to the maps $V_{x,y} \in \End_K(\A)$ as {\em $V$-operators}, and to the maps $U_{x,y} \in \End_K(\A)$ and $U_x \in \End_K(\A)$
as {\em $U$-operators}.
\end{definition}

Structurable algebras are generalizations of both associative algebras with involution and Jordan algebras.
Indeed, the class of structurable algebras with trivial involution is exactly equal to the class of Jordan algebras (in characteristic different from $2$).

If $(\A,\bar{\ })$ is a structurable algebra, then $\A=\mathcal{H}\oplus\Ss$ for
\[ \mathcal{H}=\{h\in \mathcal{A}\mid \overline{h}=h\} \quad \text{and} \quad \Ss=\{s\in \mathcal{A}\mid \overline{s}=-s\}. \]
The dimension of $\Ss$ is called the {\em skew-dimension} of $\A$.
Structurable algebras of skew-dimension 0 are exactly the Jordan algebras. 

\begin{definition}\label{conjugate_inv}
An element $x \in \A$ is called {\em conjugate invertible} if it has a {\em conjugate inverse}, i.e.\@ 
an element $\hat{x}$ such that $V_{\hat{x},x}=V_{x,\hat{x}}=\operatorname{id}$. 
If an element is conjugate invertible, then it has a unique conjugate inverse.
A structurable algebra is a {\em (conjugate) division algebra} if every non-zero element is conjugate invertible.
\end{definition}

Although it is possible to define and study isomorphisms of structurable algebras, it turns out that it is better
to allow the unit element $1$ to be mapped to a different element.
This idea is encapsulated in the notion of an isotopy.
\begin{definition}
Two structurable algebras $(\A,\bar{\ })$ and $(\A',\bar{\ })$ over a field $K$ are {\em isotopic}
if there exists two $K$-vector space isomorphisms $\psi, \chi:\A\rightarrow \A'$ such that
\[\psi(V_{x,y}z)=V_{\psi(x),\chi(y)}\psi(z)\quad\forall x,y,z\in \A.\]
The map $\psi$ is then called an {\em isotopy} between $(\A,\bar{\ })$ and $(\A',\bar{\ })$. 
\end{definition}
\begin{remark}\begin{compactenum}[(i)]
\item If $\psi$ is an isotopy, the map $\chi$ is entirely determined by the map $\psi$.

\item If $\psi$ maps the identity of $\A$ to the identity of $\A'$, $\psi$ is an isomorphism of structurable algebras.

\item If $(\A',\bar{\ })$ is isotopic to $(\A,\bar{\ })$, then there exists a conjugate invertible $u\in \A'$ such that $(\A',\bar{\ })^{\langle u\rangle}$ is isomorphic to $(\A,\bar{\ })$.
For a description of $(\A',\bar{\ })^{\langle u\rangle}$ we refer to \cite[p.\@~188]{CD} or \cite[section 2]{skewdim}.
\end{compactenum}
\end{remark}

\begin{remark}\label{rem:cssa}
Central simple structurable algebras over fields of characteristic different from $2$, $3$ and $5$, have been classified.
They consist of six classes:
\begin{compactenum}[(1)]
\item associative algebras with involution,
\item Jordan algebras,
\item structurable algebras constructed from a hermitian form over an associative algebra with involution (see section~\ref{par: pseudo-quadratic}),
\item forms of structurable matrix algebras (see Example \ref{ex: skew dim}),
\item forms of tensor products of composition algebras,
\item an exceptional $35$-dimensional case.
\end{compactenum}
For a proof we refer to \cite[Theorem 3.8]{smirnov}.

In an associative algebra with involution, an element is conjugate invertible if and only if it is invertible in the usual associative sense.
If $x$ is invertible, its conjugate inverse is equal to $\overline{x}^{-1}$.

In a Jordan algebra, an element is conjugate invertible if and only if it is invertible in the usual Jordan sense.
If $x$ is invertible, its conjugate inverse is equal to its Jordan inverse $x^{-1}$.

\end{remark}

\subsubsection{Structurable algebras of skew-dimension one}

Structurable algebras of skew-dimension one are close to Jordan algebras. Although they are in general not power-associative, concepts from Jordan theory can be adapted to this class of structurable algebras; see \cite{skewdim}.  
It is of particular interest to us that one can give each structurable algebra of skew-dimension one the structure of a simple Freudenthal triple system.  

We give some examples of structurable algebras of skew-dimension one by considering the classification in Remark~\ref{rem:cssa}:
the algebras in case (4) always have skew-dimension one;
those in cases (1) and (3) have skew-dimension one if and only if the associative algebra with involution has skew-dimension one.
The structurable algebras in the remaining cases never have skew-dimension one, except for forms of $E\otimes_K K$ with $E$ a quadratic field extension of the field $K$.

In this section $(\A,\bar{\ })$ is always a structurable algebra of skew-dimension one. Such an algebra is always central simple. 

We fix a non-zero element $s_0\in \Ss$, so $\Ss=Ks_0$.
One can show that $s_0^2=\mu 1$ for some $\mu \in K^*$ and that $s_0(s_0x)=(xs_0)s_0=\mu x$, for all $x\in \A$.

\begin{theorem}[{\cite[Proposition 2.8]{CD}}]\label{th:FTS skew dim}
Let  $(\A,\bar{\ })$ be a structurable algebra of skew-dimension one and let $s_0 \in \Ss$.
The following triple product and bilinear form give $\mathcal{A}$ the structure of a simple Freudenthal triple system:
\begin{align*}
\langle x,y\rangle1 &= (x\overline{y}-y\overline{x})s_0, \\
yzw &= 2\{y,s_0z,w\}-\langle z,w\rangle y-\langle z,y\rangle w-\langle y,w\rangle z.
 \end{align*}
 \end{theorem}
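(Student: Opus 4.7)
The proof is essentially a verification of the three conditions in Definition~\ref{def: FTS}, together with simplicity. I would proceed by first checking that both the form and the triple product are well-defined and have the right symmetry behaviour, then verify the key identity~(ii), and finally deduce simplicity.

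\textbf{Step 1 (Well-definedness and skew-symmetry of $\langle\cdot,\cdot\rangle$).} Because $\overline{x\bar y - y\bar x} = -(x\bar y - y\bar x)$, the element $x\bar y - y\bar x$ lies in $\Ss = K s_0$, hence $(x\bar y - y\bar x)s_0 \in K s_0^2 = K\cdot 1$ using the identity $s_0^2 = \mu 1$; so $\langle x,y\rangle \in K$ is well-defined. Skew-symmetry and $K$-bilinearity are immediate from the bilinearity of multiplication and $K$-linearity of the involution. A consequence that will be used repeatedly is $x\bar y - y\bar x = \mu^{-1}\langle x,y\rangle s_0$.

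\textbf{Step 2 (Symmetry of the triple product).} Trilinearity is clear. Symmetry in $y\leftrightarrow w$ follows from a short direct computation: using the definition of $\{a,b,c\}$ one shows
\[
\{y,s_0z,w\}-\{w,s_0z,y\} = (y\overline{\vphantom{f}w}-w\bar y)(s_0z) = \mu^{-1}\langle y,w\rangle\, s_0(s_0z) = \langle y,w\rangle z,
\]
using $s_0(s_0z)=\mu z$. Substituting into the definition of $yzw-wzy$, the asymmetric terms cancel. The harder symmetry is $y\leftrightarrow z$: here one must show $\{y,s_0z,w\}-\{z,s_0y,w\} = \langle z,y\rangle w$, which I would extract from the main structurable identity~\eqref{struct id} by specialising one slot to $s_0$ and using $s_0\bar x = s_0 x$ (since $\bar{s}_0=-s_0$) together with $s_0^2=\mu 1$. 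This will be the main computational obstacle.

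\textbf{Step 3 (Quartic symmetry and the identity~(ii)).} Having established that $yzw$ is symmetric in $y,z,w$, the $4$-form $(x,y,z,w)\mapsto \langle x,yzw\rangle$ is automatically symmetric in its last three arguments, so it suffices to verify symmetry under $x\leftrightarrow y$. I would prove $\langle x,yzw\rangle = \langle y,xzw\rangle$ by expanding both sides via the definitions and reducing modulo $\Ss$; the terms containing $\{y,s_0z,w\}$ pair up with the skew-symmetric piece $x\bar{\phantom{y}}-\cdot\bar x$, and the quadratic corrections involving $\langle \cdot,\cdot\rangle$ are matched using the Step~1 formula. That the resulting $4$-form is not identically zero is obvious: take $x$ such that $xxx\neq 0$, which must occur since $\A \neq 0$. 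For axiom~(ii), I would compute $(xxx)xy$ directly: setting $x=y=z=w$ in the triple product definition gives $xxx$ in terms of $\{x,s_0x,x\}=2(x\bar{s_0x})x - (x\bar x)(s_0 x)$, and then apply the structurable identity once more to reorganise $V_{xxx,x}y$ in the form $\langle y,x\rangle xxx + \langle y,xxx\rangle x$.

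\textbf{Step 4 (Simplicity).} Since $(\A,\bar{\ })$ is a structurable algebra of skew-dimension one, it is central simple (as noted in the paragraph preceding the theorem). Any FTS-ideal $I\subseteq \A$ in particular satisfies $\{I,s_0 I,\A\}\subseteq I$ modulo scalar corrections from $\langle\cdot,\cdot\rangle$, so after combining with the already-established identities, $I$ is stable under the $V$-operators $V_{x,s_0y}$ for all $x,y\in I$; together with $1\in\A$ and central simplicity this forces $I=0$ or $I=\A$. Alternatively, once~(ii) is in hand one can invoke the criterion used in Lemma~\ref{nonred and simple}: the form $\langle x,xxx\rangle$ coincides (up to a non-zero scalar) with the ``norm'' arising from the structurable algebra, and non-degeneracy of the latter gives non-degeneracy of $\langle\cdot,\cdot\rangle$, hence simplicity of the FTS.
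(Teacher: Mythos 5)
The paper offers no argument of its own for this statement --- it is quoted directly from \cite[Proposition~2.8]{CD} --- so a self-contained verification would be valuable, but what you have written is a plan rather than a proof, and the places you leave open are exactly where the content of Allison--Faulkner's proposition lies. Your Step~1 and the $y\leftrightarrow w$ symmetry in Step~2 are correct and complete. However, the three essential verifications are only announced: (a) the identity $\{y,s_0z,w\}-\{z,s_0y,w\}=\langle z,y\rangle w$, i.e.\ $V_{y,s_0z}-V_{z,s_0y}=\langle z,y\rangle\id$, does not drop out of \eqref{struct id} by ``specialising one slot to $s_0$''; after writing $z(\bar y s_0)-y(\bar z s_0)=(z\bar y-y\bar z)s_0-[z,\bar y,s_0]+[y,\bar z,s_0]$ one needs the skew-alternativity identities $[s,x,y]=-[x,s,y]=[x,y,s]$ for $s\in\Ss$ and a further operator computation, and your auxiliary claim ``$s_0\bar x=s_0x$'' is false as stated (what holds is $\overline{s_0x}=-\bar x\,s_0$); (b) the symmetry of $(x,y,z,w)\mapsto\langle x,yzw\rangle$ under $x\leftrightarrow y$ is asserted with no computation; (c) axiom (ii) of Definition~\ref{def: FTS}, which after unwinding amounts to $2\{xxx,s_0x,y\}=2\langle y,x\rangle xxx+2\langle y,xxx\rangle x+\langle x,xxx\rangle y$ (note the operator is $V_{xxx,\,s_0x}$, not $V_{xxx,x}$), is the hardest identity in \cite[Proposition~2.8]{CD} and cannot be dismissed with ``apply the structurable identity once more''. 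Also, non-vanishing of the $4$-form does not follow from ``some $x$ has $xxx\neq0$'' unless you already know the bilinear form is nondegenerate; the honest argument is an explicit witness, e.g.\ $111=2\{1,s_0,1\}=-6s_0$ and $\langle 1,111\rangle=12\mu\neq0$.

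Step~4 has a genuine gap as well. Your first argument conflates an FTS-ideal (a subspace $I$ with $IVV\subseteq I$) with an ideal of the algebra $(\A,\bar{\ })$: stability of $I$ under the operators $V_{x,s_0y}$ for $x,y\in I$ does not put you in a position to invoke central simplicity of the structurable algebra. Your alternative argument presupposes nondegeneracy of $\langle\cdot,\cdot\rangle$ via an unproved equivalence with ``nondegeneracy of the norm'', and you cannot fall back on Lemma~\ref{nonred and simple}, since anisotropy of $x\mapsto\langle x,xxx\rangle$ is not available for a general skew-dimension-one algebra (the theorem is not restricted to division algebras). The clean route is to use Ferrar's criterion quoted in the proof of Lemma~\ref{nonred and simple} (a \FTS\ is simple if and only if its bilinear form is nondegenerate) and to prove nondegeneracy directly, for instance by showing that the radical of $\langle\cdot,\cdot\rangle$ is an ideal of the central simple algebra $\A$ and hence zero. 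As it stands, then, your write-up reproduces the easy parts of the cited proof and defers its substance; either carry out the identities above or cite \cite[Proposition~2.8]{CD} as the paper does.
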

 
 \begin{remark}\label{rem:norm}
 \begin{compactenum}[(i)]
\item If we would choose another generator for $\Ss$, then we would get a Freudenthal triple system that is a scalar mutiple of the one we started with.
  
\item In structurable algebras there exists a generalization of the generic norm in a Jordan algebra, called the {\em conjugate norm}.
For an exact definition see \cite{norms}. 

For algebras of skew-dimension one, the conjugate norm, denoted by~$\nu$, is exactly $\frac{1}{12 \mu}\langle x,xxx\rangle$.
Indeed, one can easily verify that the norm in \cite[Prop.\@~5.4]{norms} and the definition of $\nu$ in \cite[Par.\@~1]{skewdim} are given by the same formulas. 
The quartic map $\nu$ is independent of the choice of~$s_0$.
\end{compactenum}
\end{remark}

In structurable algebras of skew-dimension one there is an easy way to write down the conjugate inverse of an element.
\begin{theorem}[{\cite[Prop.\@~2.11]{CD}}]\label{th: invers in struct}
Let  $(\A,\bar{\ })$ be a structurable algebra of skew-dimension one and let $s_0 \in \Ss$.
Then $x\in \A$ is conjugate invertible (see Definition \ref{conjugate_inv})  if and only if $\nu(x)\neq0$. 
When this is the case, we have
\[\hat{x}=-\frac{1}{3\mu\nu(x)}s_0\{x,s_0x,x\}.\] 
\end{theorem}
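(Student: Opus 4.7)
The plan is to verify the explicit formula by direct computation using the Freudenthal triple system structure from Theorem~\ref{th:FTS skew dim}, and then to extract the equivalence with $\nu(x)\neq 0$ from that same computation.

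First I would rewrite the candidate inverse in a form that is easier to plug into FTS identities. Applying the triple product formula $yzw = 2\{y,s_0z,w\}-\langle z,w\rangle y-\langle z,y\rangle w-\langle y,w\rangle z$ with $y=z=w=x$, and using that $\langle x,x\rangle=0$ by skew-symmetry, I obtain $\{x,s_0x,x\}=\tfrac12\,xxx$. Thus the claimed formula becomes
\[
\hat{x} \;=\; -\frac{1}{6\mu\nu(x)}\,s_0(xxx) \;=\; -\frac{2}{\langle x,xxx\rangle}\,s_0(xxx),
\]
using $\nu(x)=\tfrac{1}{12\mu}\langle x,xxx\rangle$ from Remark~\ref{rem:norm}(ii).

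Next I would verify the key identity $V_{x,\hat{x}}=\id$ by computing $\{x,\,s_0(xxx),\,z\}$ for arbitrary $z\in\A$. Inverting the triple product formula gives
\[
\{x,s_0(xxx),z\} \;=\; \tfrac12\bigl(x(xxx)z + \langle xxx,z\rangle x + \langle xxx,x\rangle z + \langle x,z\rangle(xxx)\bigr).
\]
Now the symmetry of $t$ gives $x(xxx)z=(xxx)xz$, and axiom (ii) of Definition~\ref{def: FTS} applied with $y=z$ yields $(xxx)xz = \langle z,x\rangle\,xxx + \langle z,xxx\rangle\,x$. Substituting this in and collecting terms, the $xxx$-terms cancel by skew-symmetry of $\langle\cdot,\cdot\rangle$, the $x$-terms cancel likewise, and only the $z$-term survives, giving $\{x,s_0(xxx),z\}=-6\mu\nu(x)\,z$. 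Multiplying by $-1/(6\mu\nu(x))$ yields $V_{x,\hat{x}}z=z$. The identity $V_{\hat{x},x}=\id$ should follow by an analogous computation; alternatively, one can use the structurable-algebra identity \eqref{struct id} together with the fact that $V_{x,\hat{x}}=\id$ commutes with every $V$-operator, forcing symmetry.

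For the equivalence of conditions, the previous computation is in fact the main input: it shows that in \emph{any} structurable algebra of skew-dimension one one has the operator identity $V_{x,s_0(xxx)}=-6\mu\nu(x)\,\id$, without assuming $\nu(x)\neq 0$. Hence if $\nu(x)\neq 0$, the explicit element above is a conjugate inverse, which is automatically unique. Conversely, suppose $x$ is conjugate invertible. If $\nu(x)=0$, then $V_{x,s_0(xxx)}=0$; plugging $z=\hat{x},\,w=x$ into the structurable identity \eqref{struct id} with $y=s_0(xxx)$ gives $[V_{x,y},V_{\hat{x},x}]=0$ trivially (since $V_{\hat{x},x}=\id$), and the resulting equation forces $V_{\hat{x},\{y,x,x\}}=0$, which combined with the non-degeneracy of $V$-operators on a conjugate invertible element yields a contradiction. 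The main obstacle I expect is getting the signs right in the forward computation (in particular tracking the skew-symmetry carefully), and making the converse direction rigorous without importing too much external theory on the conjugate norm from \cite{norms,skewdim}.
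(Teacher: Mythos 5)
The paper does not actually prove this statement: it is quoted from Allison--Faulkner \cite[Prop.~2.11]{CD}, so your attempt is a from-scratch argument and has to be judged on its own. The forward half is correct and quite clean: from the formula of Theorem~\ref{th:FTS skew dim} you correctly get $\{x,s_0x,x\}=\tfrac12 xxx$, and your computation of $\{x,s_0(xxx),z\}$ via axiom (ii) of Definition~\ref{def: FTS} and skew-symmetry does give the operator identity $V_{x,\,s_0(xxx)}=-6\mu\nu(x)\,\mathrm{id}$, hence $V_{x,\hat x}=\mathrm{id}$ when $\nu(x)\neq 0$. The passage to $V_{\hat x,x}=\mathrm{id}$ is only gestured at: the symmetry argument does work (from \eqref{struct id} one gets $V_{z,(1-V_{\hat x,x})w}=0$ for all $z,w$, and evaluating $V_{1,s}$ at $1$ gives $2\bar s-s=0$, hence $s=0$ in characteristic $\neq 3$), but you should spell out that last non-degeneracy step rather than assert it.

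The genuine gap is the converse, i.e.\ that conjugate invertibility forces $\nu(x)\neq 0$. Your proposed contradiction is vacuous: with $y=s_0(xxx)$ and $\nu(x)=0$, the element $\{y,x,x\}$ is itself zero, so the derived equation $V_{\hat x,\{y,x,x\}}=0$ carries no information. Indeed $(xxx)xx=\langle x,x\rangle\,xxx+\langle x,xxx\rangle\,x=12\mu\nu(x)\,x=0$ when $\nu(x)=0$, and in concrete examples the situation is even more degenerate: in a structurable matrix algebra $M(J,\eta)$ the element $x=\left(\begin{smallmatrix}1&0\\0&0\end{smallmatrix}\right)$ has $\nu(x)=0$ and already $xxx=0$, so $y=0$ and every term in your identity vanishes identically. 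Moreover the ``non-degeneracy of $V$-operators on a conjugate invertible element'' that you invoke ($V_{\hat x,s}=0\Rightarrow s=0$) is itself unproven. So the implication ``$x$ conjugate invertible $\Rightarrow\nu(x)\neq0$'' still needs a real argument; it does not fall out of the same computation, and one has to bring in the norm-theoretic machinery of \cite{CD} or \cite{norms} (or some other genuine input relating $\nu$ to invertibility), which is presumably why the paper simply cites \cite[Prop.~2.11]{CD} instead of proving it.
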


On a structurable algebra, we have the notion of similarity; on a \FTS, we have the notion of isotopy.
Theorem \ref{th:FTS skew dim} tells us that a structurable algebra of skew-dimension one is also a \FTS.
The following lemma states that in this case the notions of isotopy and similarity coincide.
\begin{lemma}[{\cite[Proposition 4.11]{G}}]\label{lem: similar and isotopic}
Let  $(\A,\bar{\ })$ and $(\A',\bar{\ })$ be structurable algebras of skew-dimension one.
Consider the corresponding \FTS s as in Theorem \ref{th:FTS skew dim}.
Then $\A$ and $\A'$ are similar as \FTS s if and only if they are isotopic as structurable algebras.
\end{lemma}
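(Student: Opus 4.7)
My plan is to exploit the explicit bridge between structurable algebras of skew-dimension one and their associated Freudenthal triple systems given by Theorem~\ref{th:FTS skew dim}. For a fixed nonzero $s_0 \in \Ss$ with $s_0^2 = \mu \cdot 1$, the defining formula for the triple product can be inverted to read
\[ V_{y,\, s_0 z}\, w \;=\; \tfrac{1}{2}\bigl(yzw + \langle z, w\rangle y + \langle z, y\rangle w + \langle y, w\rangle z\bigr). \]
Since the left multiplication $z \mapsto s_0 z$ is a $K$-linear bijection of $\A$ (its inverse being $\mu^{-1} L_{s_0}$), this identity determines every $V$-operator $V_{y, a} w$ from the FTS data, together with the choice of $s_0$; conversely, the FTS triple product is recovered from the $V$-operators once $s_0$ is fixed. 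This two-way dictionary will drive both implications.

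\textbf{Similar $\Rightarrow$ isotopic.} Assume $\psi \colon \A \to \A'$ is an FTS similarity with multiplier $\lambda$, and fix generators $s_0 \in \Ss$ and $s_0' \in \Ss'$. Applying $\psi$ to the displayed identity, using $\psi(yzw) = \lambda^{-1}(\psi y)(\psi z)(\psi w)$ and $\langle y, z\rangle = \lambda^{-1}\langle \psi y, \psi z\rangle'$, and then matching with the corresponding identity in $\A'$, a direct calculation gives
\[ \psi\bigl(V_{y,\, s_0 z}\, w\bigr) \;=\; V_{\psi y,\;\lambda^{-1} s_0' \psi(z)}\; \psi(w) . \]
As $z$ varies, $s_0 z$ covers all of $\A$, so the rule $\chi(s_0 z) := \lambda^{-1} s_0' \psi(z)$ defines a $K$-linear map $\chi\colon\A\to\A'$ satisfying the isotopy relation $\psi(V_{x,y}z) = V_{\psi x, \chi y}\psi z$.

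\textbf{Isotopic $\Rightarrow$ similar.} By Remark~(iii) after the definition of isotopy, it suffices to treat two building blocks: (a) an isomorphism of structurable algebras of skew-dimension one induces an isometry of the associated FTSs, and (b) for any conjugate invertible $u \in \A'$, the FTS attached to the isotope $(\A')^{\langle u\rangle}$ is similar to the FTS attached to $\A'$. Part (a) is immediate from Theorem~\ref{th:FTS skew dim} once the chosen generators of the skew spaces are related by the isomorphism. For (b), I would compute the triple product and skew form on $(\A')^{\langle u\rangle}$ directly from the explicit formulas for its product and involution (as in \cite[p.~188]{CD}) and a suitably chosen generator of its one-dimensional skew space expressed in terms of $u$ and $s_0'$. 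Tracking the resulting scalars shows that both the skew form and the quartic form $x \mapsto \langle x, xxx\rangle$ get rescaled by a common factor $\lambda \in K^*$, which by the second characterisation in Definition~\ref{def: similarity} is exactly what is needed. Composing the isomorphism $\A \cong (\A')^{\langle u\rangle}$ from Remark~(iii) with the resulting similarity $(\A')^{\langle u\rangle}\sim \A'$ produces the desired similarity between $\A$ and $\A'$.

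\textbf{Main obstacle.} The main technical difficulty is item (b) of the converse direction: one must pin down a generator of the skew space of the isotope in terms of $u$ and $s_0'$, and then verify that a \emph{single} scaling factor controls both invariants simultaneously. The one-dimensionality of $\Ss$ is the pivotal rigidity fact here, as it forces the choice of $s_0$ (and hence the multiplier $\lambda$) to be determined up to a scalar uniquely compatible with the isotope construction.
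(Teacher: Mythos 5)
Your first direction (similar $\Rightarrow$ isotopic) is correct and complete: inverting the formula of Theorem~\ref{th:FTS skew dim} to write $V_{x,\,s_0z}\,w$ in terms of the triple product and the bilinear form, using that left multiplication by $s_0$ is bijective, and reading off the companion map $\chi(s_0z)=\lambda^{-1}s_0'\psi(z)$ is a clean, self-contained argument. Note that the paper itself does none of this: its proof of the lemma consists of citing \cite[Proposition~4.11]{G} and observing that the $56$-dimensionality assumed there plays no role, so for this half you are giving a genuinely different (and more explicit) route.

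The converse, however, contains a genuine gap at exactly the point you yourself flag as the main obstacle. The reduction via part (iii) of the remark following the definition of isotopy to the two cases (a) isomorphisms and (b) $u$-isotopes is fine, and (a) is indeed routine (an isomorphism carries $\Ss$ to $\Ss'$, so $s_0\mapsto c\,s_0'$ and one obtains a similarity with multiplier $c^{-1}$). But step (b) is only announced, not proved: the assertion that, for a suitable generator of the skew part of $(\A')^{\langle u\rangle}$, ``tracking the resulting scalars'' rescales both invariants compatibly is precisely the nontrivial content of this direction, and nothing in your text establishes it. Carrying it out requires the explicit multiplication and involution of the isotope from \cite[p.~188]{CD} or \cite{skewdim}, the identification of a generator of its one-dimensional skew space (which is a different subspace of the common underlying vector space, since the involution changes with $u$), and a verification that the bilinear form scales by some $\lambda$ while the quartic form $x\mapsto\langle x,xxx\rangle$ scales by $\lambda^{2}$ --- note that Definition~\ref{def: similarity} demands $\lambda^{2}$ for the quartic invariant, not the ``common factor $\lambda$'' your sketch suggests. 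As written, the ``isotopic $\Rightarrow$ similar'' half is therefore not yet proved; the quickest legitimate ways to close it are either to perform the isotope computation from \cite{CD}/\cite{skewdim}, or to do what the paper does and invoke \cite[Proposition~4.11]{G} after checking that its proof is independent of the dimension.
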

\begin{proof}
In \cite{G}, all \FTS s that are considered are $56$-dimensional, but the proof remains valid in arbitrary dimension.
\end{proof} 

We will now discuss an important class of structurable algebras of skew-dimension one, which we will call  structurable matrix algebras. 
\begin{example}[{\cite[Example 1.9]{skewdim}}] \label{ex: skew dim}
We give a short overview of how this class of algebras and the corresponding Freudenthal triple systems are constructed. For more details, see \cite[Example 1.9]{skewdim}.

Let $J$ be a Jordan algebra over a field $K$ constructed from an admissible cubic form $N$ with base point,
or a Jordan algebra constructed from a non-degenerate quadratic form.
(For definitions of these Jordan algebras we refer to \cite[Chapter II.3.3 and II.4]{M}; an admissible cubic form is the same as a Jordan cubic form in \cite[II.4.3]{M}.)
 
The Jordan algebra $J$ is equipped with a non-degenerate trace form $T$ and a symmetric sharp product $\times$(sometimes denoted by $\sharp$). 

We define the {\em structurable matrix algebras} as follows. Fix a constant $\eta\in K$, and define
\[\mathcal{A}= \left\{ \begin{pmatrix}k_1&j_1\\j_2&k_2\end{pmatrix} \Bigm\vert k_1,k_2\in K, \, j_1, j_2 \in J\right\}.\]
For $k_1,k_2,k'_1,k'_2\in K$, $j_1,j_2,j'_1,j'_2\in J$, define the involution and multiplication as follows:
\[\overline{\begin{pmatrix}k_1&j_1\\j_2&k_2\end{pmatrix}}=\begin{pmatrix}k_2&j_1\\j_2&k_1\end{pmatrix},\]
\[\begin{pmatrix}
k_1&j_1\\
j_2&k_2
\end{pmatrix} 
\begin{pmatrix}
k'_1&j'_1\\ 
j'_2&k'_2
\end{pmatrix}=
\begin{pmatrix} 
k_1k'_1+\eta T(j_1,j'_2)&k_1j'_1+k'_2j_1+\eta(j_2\times j'_2)\\
k'_1j_2+k_2j'_2+j_1\times j'_1& k_2k'_2+\eta T(j_2,j'_1)\end{pmatrix}.\]
We denote this structurable matrix algebra by $M(J,\eta)$.

Now let $s_0=\left(\begin{smallmatrix}
1&0\\ 
0&-1
\end{smallmatrix}\right)$;
then the \FTS\  defined in Theorem \ref{th:FTS skew dim} has bilinear product
\[ \left\langle  \begin{pmatrix}k_1&j_1\\j_2&k_2\end{pmatrix}, \begin{pmatrix}k'_1&j'_1\\j'_2&k'_2\end{pmatrix}\right\rangle=k_1k'_2-k_2k'_1+\eta T(j_1, j'_2)-\eta T(j_2,j'_1) , \]
and the conjugate norm is given by
\begin{multline*}
	\nu\begin{pmatrix}k_1&j_1\\j_2&k_2\end{pmatrix} = 4k_1\eta N(j_1)+4k_2\eta^2N(j_2)-4\eta^2 T(j_1^\sharp,j_2^\sharp) \\
	+ \bigl(\eta T(j_1,j_2)-k_1k_2\bigr)^2 .
\end{multline*}
\end{example}

The following theorem shows that each structurable algebra of skew-dimension one is isomorphic to a matrix algebra or becomes isomorphic to a matrix algebra after adjoining $\sqrt{\mu}$ to the base field.
\begin{theorem}[{\cite[Prop.\@~4.5]{CD}}] Let  $(\A,\bar{\ })$ be a structurable algebra of skew-dimension one, and let $s_0 \in \Ss$ with $s_0^2=\mu1$.
Then $(\A,\bar{\ })$ is isomorphic to a matrix algebra $M(J,\eta)$ if and only if $\mu$ is a square in $K$.
\end{theorem}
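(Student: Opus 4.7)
The plan is to treat the two directions of the biconditional separately. The forward direction is a direct computation inside $M(J,\eta)$: the standard element $s_0 = \left(\begin{smallmatrix} 1 & 0 \\ 0 & -1 \end{smallmatrix}\right)$ of the skew part satisfies $s_0^2 = 1$, and since any other generator of the one-dimensional space $\Ss$ has the form $\lambda s_0$ with $\lambda \in K^*$, its square equals $\lambda^2 \cdot 1$, which is a square multiple of~$1$. Hence the class of $\mu$ in $K^*/(K^*)^2$ is trivial whenever $\A \cong M(J,\eta)$.

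For the converse, suppose $\mu = \lambda^2$ for some $\lambda \in K^*$; replacing $s_0$ by $\lambda^{-1} s_0$, I may assume $s_0^2 = 1$. I then introduce
\[ e_1 := \tfrac{1}{2}(1+s_0), \qquad e_2 := \tfrac{1}{2}(1-s_0) . \]
Using the identity $s_0(s_0 x) = (x s_0) s_0 = x$ from the paragraph preceding Theorem~\ref{th:FTS skew dim}, I verify that $e_1, e_2$ are orthogonal idempotents summing to $1$ and that the involution swaps them. These idempotents yield a Peirce decomposition
\[ \A = \A_{11} \oplus \A_{12} \oplus \A_{21} \oplus \A_{22} , \]
where $\A_{ij}$ consists of elements on which $e_i$ acts as identity on the left and $e_j$ as identity on the right; the involution interchanges $\A_{11}$ with $\A_{22}$ and preserves $\A_{12}$ and $\A_{21}$ setwise. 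Since $\dim_K \Ss = 1$ and $Ks_0 = K(e_1 - e_2)$ already exhausts the skew elements in $\A_{11} \oplus \A_{22}$, a standard argument using the structurable identity forces $\A_{11} = Ke_1$ and $\A_{22} = Ke_2$.

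Setting $J := \A_{12}$, the involution gives a canonical $K$-linear identification $\A_{21} \cong J$. I would then transport the structurable operations to equip $J$ with a unital Jordan algebra structure together with either an admissible cubic norm or a non-degenerate quadratic form, in accordance with the classification of central simple structurable algebras recalled in Remark~\ref{rem:cssa}. The multiplication pairing $\A_{12} \times \A_{21} \to \A_{11} = Ke_1$ produces the scalar $\eta \in K^*$. The desired isomorphism is then the $K$-linear map
\[ k_1 e_1 + j + j' + k_2 e_2 \mapsto \begin{pmatrix} k_1 & j \\ j' & k_2 \end{pmatrix} , \]
which is a bijection by construction and sends $1 = e_1 + e_2$ to the identity matrix; multiplicativity and compatibility with the involution are checked Peirce slice by Peirce slice.

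The main technical obstacle is the identification of $J = \A_{12}$ as a Jordan algebra of the correct admissible cubic (or quadratic) type, together with the verification that the multiplication on $\A$ matches the matrix-algebra formulas defining $M(J,\eta)$. This amounts to a careful term-by-term unwinding of the structurable identity \eqref{struct id} on each Peirce component, and it is here that the skew-dimension one hypothesis must be invoked repeatedly to pin down both the cubic (or quadratic) nature of $J$ and the precise value of $\eta$.
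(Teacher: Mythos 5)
First, note that the paper gives no proof of this statement: it is quoted verbatim from Allison--Faulkner \cite[Prop.~4.5]{CD}, so your attempt has to be judged as a standalone proof of their result. Your forward direction is fine. For the converse, the idempotent skeleton ($s_0^2=1$ after rescaling, $e_1=\tfrac12(1+s_0)$, $e_2=\tfrac12(1-s_0)$, involution swapping them) is the right shape, but already your first structural claim is unjustified: to get the four joint Peirce spaces $\A_{ij}$ you need $L_{e_1}$ and $R_{e_1}$ to be \emph{commuting} idempotent operators. Idempotency follows from $s_0(s_0x)=(xs_0)s_0=\mu x$, but commutativity amounts to $(s_0x)s_0=s_0(xs_0)$, which is not among the identities quoted in the paper and must be derived (it does follow from the fact that structurable algebras are skew-alternative, so $[s_0,x,s_0]=-[x,s_0,s_0]=0$ once $(xs_0)s_0=\mu x$ is known, but you never say this). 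Your justification that the diagonal part is $Ke_1\oplus Ke_2$ is also hand-waved; the clean argument is that $a\mapsto a-\overline{a}$ maps $\A_{11}$ injectively into $\Ss\cap(\A_{11}\oplus\A_{22})$, which has dimension at most one. These are repairable, but they are gaps.

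The genuine missing content is everything after ``Setting $J:=\A_{12}$''. To conclude $\A\cong M(J,\eta)$ in the sense of Example~\ref{ex: skew dim}, you must equip $\A_{12}$ with the structure of a unital Jordan algebra arising from an \emph{admissible cubic form with base point} or from a \emph{nondegenerate quadratic form}, i.e.\ construct $N$, the trace $T$, the sharp/cross product, verify the adjoint and admissibility identities, and then check that the multiplication of $\A$ agrees slice by slice with the matrix formulas. None of this is a formal consequence of the Peirce decomposition: $\A_{12}$ is not even a subalgebra (its square lies in $\A_{21}$), so the Jordan structure, the cubic form and the constant $\eta$ all have to be extracted from the products $\A_{12}\cdot\A_{12}\subseteq\A_{21}$ and the pairing $\A_{12}\times\A_{21}\to Ke_1$; in particular $\eta$ cannot be ``read off the pairing'' as you suggest, since the pairing equals $\eta T(\cdot,\cdot)$ and $T$ is only defined once $N$ is. Establishing that what one obtains really is an admissible cubic (or quadratic) Jordan algebra, and distinguishing the two cases, is precisely the substance of Allison--Faulkner's proof; your proposal explicitly defers it as ``the main technical obstacle''. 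As it stands, then, you have an outline of the standard strategy plus the easy direction, not a proof of the theorem.
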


%%%%%%%%%%%%%%%%%%%%%%%%%%%%%%%%%%%%%%%%%%%%%%%%%%%%%%%%%%%%%%%%%%%%%%%%%%%%%%%%%%%%%%%%%%%%%%%%%%%%%%%%%%%%%%%%%%
%                                                                                                                %
%  SECTION : FREUDENTHAL TRIPLE SYSTEMS                                                                          %
%                                                                                                                %
%%%%%%%%%%%%%%%%%%%%%%%%%%%%%%%%%%%%%%%%%%%%%%%%%%%%%%%%%%%%%%%%%%%%%%%%%%%%%%%%%%%%%%%%%%%%%%%%%%%%%%%%%%%%%%%%%%
\section{Quadrangular algebras and Freudenthal triple systems}\label{se:FTS}

We show that each quadrangular algebra defined over a field of characteristic not 2 or 3 can be given the structure of a Freudenthal triple system. 
It turns out that the maps $x\mapsto x\pi(x)$ and $x\mapsto q(\pi(x))$,
which play an important role in the structure of
(the non-abelian rank one residue of) the quadrangles of type $E_6, E_7$ and $E_8$, also play an important role in the \FTS. 
\begin{theorem}\label{th: FTS}
Let  $(K,L,q,1,X,\cdot,h,\theta)$ be a quadrangular algebra over a field $K$ with $\Char(K)\neq2,3$,
and let $\pi$ be as in Definition \ref{def:quad}.
Then $X$ is a Freudenthal triple system with triple product \[xyz:= \tfrac{1}{2}( x(h(y,z)+h(z,y))+y(h(x,z)+h(z,x))+z(h(x,y)+h(y,x))) \] and skew symmetric bilinear form $\langle x,y\rangle:=g(x,y)$,
for all $x,y,z\in X$. 
This Freudenthal triple system is simple and not reduced.

Furthermore we have that $xyz$ is the linearization%
\footnote{Of course, we mean that the map from $X \times X \times X$ to $X$ mapping $(x,y,z)$ to $xyz$ is the linearization of the cubic map
from $X$ to $X$ mapping $x$ to $x\pi(x)$, but our slight abuse of language should not cause any confusion.
Note that we use the convention that the linearization of a homogeneous map $A$ of degree $n$ is the symmetric $n$-linear map $B(x_1,\dots,x_n)$
such that $B(x,\dots,x) = n! \cdot A(x)$.  }
of $x\pi(x)$ and $\langle x, yzw\rangle$ is the linearization of $-\tfrac{1}{2}q(\pi(x))$. 

In particular, $xxx=6x\pi(x)$ and $\langle x,xxx\rangle=-12q(\pi(x))$.
\end{theorem}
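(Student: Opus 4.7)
The plan is to first handle the structural claims, then establish the two FTS axioms, and finally deduce simplicity and non-reducedness. The easy observations come first: axiom~(C) gives $\pi(x) = \theta(x,1) = \tfrac12 h(x,x)$, so specialising the proposed triple product to $y = z = x$ yields $xxx = 3x\,h(x,x) = 6x\pi(x)$, and the formula for $xyz$ is manifestly the polarization (in the footnote's convention) of the cubic $x\mapsto x\pi(x)$. For $\langle x, xxx\rangle$, I would expand $\langle x,xxx\rangle = 6g(x,x\pi(x)) = 3f(h(x,x\pi(x)),1)$, apply axiom~(B3) with $a=x,\ v=\pi(x),\ b=x$ to obtain $f(h(x\pi(x),x),1) = f(h(x,x),\pi(x)) = 4q(\pi(x))$, and use Theorem~\ref{eign:quadr}(i) together with the $\sigma$-invariance of $f(\cdot,1)$ (since $1^\sigma = 1$) to flip the sign and conclude $\langle x,xxx\rangle = -12q(\pi(x))$.

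For axiom~(i), which requires $\langle x, yzw\rangle$ to be symmetric $4$-linear, the form is $4$-linear and symmetric in $y,z,w$ by construction, so only the cross-symmetry $\langle x_1,x_2x_3x_4\rangle = \langle x_2,x_1x_3x_4\rangle$ remains. Using the auxiliary identity $f(h(y,xv),1) = -f(h(x,y),v)$ (a consequence of Theorem~\ref{eign:quadr}(i), axiom~(B3) and $\sigma$-invariance of $f(\cdot,1)$) one reduces $\langle x_1,x_2x_3x_4\rangle$ to a cyclic sum of terms of the form $f(h(x_i,x_1),\,h(x_j,x_k)+h(x_k,x_j))$. The cross-symmetry then follows from $h(a,b)^\sigma = -h(b,a)$ (Theorem~\ref{eign:quadr}(i)), the symmetry of $f$, and the fact that $h(x_j,x_k)+h(x_k,x_j)$ is $\sigma$-skew, so that the $\sigma$-twist absorbed into $h(x_i,x_1)$ produces the desired swap. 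Non-vanishing of the form is recorded at the end from anisotropy of $q \circ \pi$.

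The main obstacle is axiom~(ii): $(xxx)xy = \langle y,x\rangle xxx + \langle y,xxx\rangle x$. Writing $u := x\pi(x)$, the key preliminary is that $\theta(x,\pi(x)) \in K\cdot 1$: Theorem~\ref{eign:quadr}(iv) with $b=x$ gives $h(u,x) = -2\theta(x,\pi(x))$, while Theorem~\ref{eign:quadr}(i) independently gives $h(u,x) = -2\theta(x,\pi(x))^\sigma$, so $\theta(x,\pi(x))$ is $\sigma$-fixed. Consequently $h(u,x)+h(x,u) = 0$, which kills the $y$-coefficient in the expansion of $uxy$. Axiom~(D1) rewrites $u\,h(y,x) = x\,\theta(x,h(y,x))$. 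Combining Theorem~\ref{eign:quadr}(v) (with $a=x$, $v=1$, $w=h(x,y)$) with~(iv) to evaluate $\theta(x,h(y,x))$, and using axiom~(B2) to rewrite $h(x,y\pi(x))$, the expression $\theta(x,h(y,x))+h(x,y\pi(x))$ collapses to an element of $K\cdot 1$; the non-scalar part cancels thanks to the identity $\pi(x)^\sigma = -\pi(x)$ (equivalently $f(1,\pi(x)) = 0$), which itself is Theorem~\ref{eign:quadr}(i) applied with $a=b$. A final sign-bookkeeping check identifies the resulting scalar with $f(h(y,u),1)$, completing axiom~(ii). This is the most delicate step, using essentially every axiom of the quadrangular algebra. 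For non-reducedness and simplicity, Lemma~\ref{nonred and simple} applies: axiom~(D2) forces $\pi(x)\neq 0$ for $x\neq 0$, hence $\langle x,xxx\rangle = -12q(\pi(x))\neq 0$ by anisotropy of $q$.
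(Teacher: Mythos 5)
Your proposal is correct, and its overall skeleton matches the paper's: direct verification of the two axioms of Definition~\ref{def: FTS}, the computations $xxx=6x\pi(x)$ and $\langle x,xxx\rangle=-12q(\pi(x))$, and then Lemma~\ref{nonred and simple} for simplicity and non-reducedness. The genuine difference is in the second FTS axiom. The paper's proof is leaner there: after killing the $y$-coefficient via (C) and Theorem~\ref{eign:quadr}(iv), it rewrites $h(x,y)+h(y,x)=2h(x,y)-f(h(x,y),1)1$ (and similarly for the other bracket), applies (D1) to $(x\pi(x))h(x,y)$, and the whole non-scalar part $x\bigl(\theta(x,h(x,y))+h(x\pi(x),y)\bigr)$ vanishes by one further application of Theorem~\ref{eign:quadr}(iv) --- no use of (B2), of Theorem~\ref{eign:quadr}(v), or of any scalar identification. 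You instead apply (D1) to $(x\pi(x))h(y,x)$ and (B2) to $h(y,x\pi(x))$, which leaves the auxiliary identity $\theta(x,h(y,x))+h(x,y\pi(x))=f(h(y,x\pi(x)),1)\cdot 1$ to be established; I checked that this identity is true and is provable with exactly the ingredients you cite: apply Theorem~\ref{eign:quadr}(v) with $a=x$, $v=1$, $w=h(x,y)$, substitute $\theta(x,h(x,y))=-h(x\pi(x),y)$ from (iv), apply $\sigma$, and use $\pi(x)^\sigma=-\pi(x)$ together with $f(h(y,x\pi(x)),1)=-f(h(x,y),\pi(x))$ coming from Theorem~\ref{eign:quadr}(ii), so that the surviving scalar is indeed $f(h(y,x\pi(x)),1)$. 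So your route closes, at the cost of invoking the heaviest identity (v), which the paper's choice of rewriting avoids entirely. Your remaining steps --- the four-variable symmetry via the reduction $f(h(y,xv),1)=-f(h(x,y),v)$ and absorption of $\sigma$ into the $\sigma$-skew factor $h(x_j,x_k)+h(x_k,x_j)$, the direct evaluation of $\langle x,xxx\rangle$ via (B3) rather than by specializing the four-linear expression, and the use of (D2) plus anisotropy of $q$ for Lemma~\ref{nonred and simple} --- are all correct and essentially agree with the paper.
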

\begin{proof}
It is clear that the triple product is symmetric and trilinear. 
It follows from \eqref{eign:quadr} that $g$ is skew symmetric and bilinear, and that $\langle x,yzw\rangle$ is linear in its four variables. 
Since $\pi(x)=\tfrac{1}{2}h(x,x)$ we have that $xxx=6x\pi(x)$, so $xyz$ is the linearization of $x\pi(x)$. 
To prove the first axiom of Definition \ref{def: FTS} we expand $\langle x,yzw\rangle$, and we find
\begin{align*}
g( x,yzw)&=\tfrac{1}{2}f(h(x,yzw),1)\\
&=\tfrac{1}{4}\big(f(h(x,w(h(y,z)+h(z,y))),1)+f(h(x,y(h(w,z)+h(z,w))),1)\\
&\hspace*{12ex}+f(h(x,z(h(w,y)+h(y,w))),1)\big)\\
&=\tfrac{1}{4}\big(f(h(x,w),\overline{h(y,z)+h(z,y)})+f(h(x,y),\overline{h(w,z)+h(z,w)})\\
&\hspace*{12ex}+f(h(x,z),\overline{h(w,y)+h(y,w)})\big)\\
&=-\tfrac{1}{4}\big(f(h(x,w),h(y,z)+h(z,y))+f(h(x,y),h(w,z)+h(z,w))\\
&\hspace*{12ex}+f(h(x,z),h(w,y)+h(y,w))\big).
\end{align*}
Therefore $\langle x,yzw\rangle$ is indeed symmetric and linear in its four variables. 
When we put $x=y=z=w$, this expression equals $-12q(\pi(x))$. Thus it is the linearization of $-\tfrac{1}{2}q(\pi(x))$.
This map is non-zero since both $q$ and $\pi$ are anisotropic.

In order to  establish the second axiom, we show that
\begin{equation}\label{eq:FTS2}
	(x\pi(x))xy=\tfrac{1}{2}\big(f(h(y,x),1) x\pi(x)+f(h(y,x\pi(x)),1) x\big).
\end{equation}
We expand the left side of this identity, and we get
\begin{multline*}
	(x\pi(x))xy = \tfrac{1}{2}\Bigl( x\pi(x) \bigl( h(x,y)+h(y,x) \bigr) + x \bigl( h(x\pi(x),y)+h(y,x\pi(x)) \bigr) \Bigr) \\
	+ y \bigl( h(x,x\pi(x))+h(x\pi(x),x) \bigr).
\end{multline*}
It follows from Theorem~\ref{eign:quadr}(iv) that the third term is zero.
To reduce the two other terms we use
\[h(y,z)+h(z,y)=h(y,z)-\overline{h(y,z)}=2h(y,z)-f(h(y,z),1) \, 1 ,\]
and we get
\begin{align*}
(x\pi(x))xy
&=\tfrac{1}{2} \Bigl( 2 x\pi(x)h(x,y)-x\pi(x)f(h(x,y),1) \\
	&\hspace*{12ex} + 2xh(x\pi(x),y)-xf(h(x\pi(x),y),1) \Bigr) \\
&=x\bigl( \theta(x,h(x,y))+h(x\pi(x),y) \bigr) \\
	&\hspace*{12ex} + \tfrac{1}{2}\bigl(x\pi(x)f(h(y,x),1)+xf(h(y,x\pi(x)),1)\bigr) ,
\end{align*}
where we have used (D1).
It follows from Theorem~\ref{eign:quadr}(iv) that the first term is zero, establishing~\eqref{eq:FTS2}.

As $\langle x,xxx\rangle=-12 q(\pi(x))$ is anisotropic it follows from Lemma \ref{nonred and simple} that the Freudenthal triple system we obtained is simple and not reduced. 
\end{proof}

\begin{remark}
One should be careful not to confuse between the notation for the triple product $xyz$ for $x,y,z\in X$ and the map $X\times L\rightarrow X$, defined in  Definition \ref{def:quad}, which have also denoted by juxtaposition.
However, as there is no multiplication defined on $X$, we will never write $xy$ with $x,y \in X$, so our notation is always unambiguous.

On the other hand, for  $x \in X$ and $v,w\in L$ the term $xvw$ could be interpreted in two ways.
However without brackets this will always denote the triple product, whereas with brackets $(xv)w$ this denotes applying the $X\times L\rightarrow X$ map two successive times.
\end{remark}

In the next theorem we show that the triple product behaves well with respect to the $C(q,1)$-module structure on $X$ (see Remark~\ref{rem:module}).
\begin{theorem}\label{th: module FTS}For $x,y,z\in X$ and $v\in L\setminus\{0\}$ we have that
\[(xyz)v=\frac{(xv)(yv)(zv)}{q(v)}.\]
\end{theorem}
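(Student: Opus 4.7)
The plan is to reduce the trilinear identity to its cubic specialization and then apply the quadrangular-algebra axioms. Both sides, viewed as functions of $(x,y,z)$, are symmetric trilinear forms with values in $X$ (on the left by the symmetry and trilinearity of the triple product composed with the $K$-linear map $\cdot\, v$; on the right since the triple product is symmetric and $x \mapsto xv$ is $K$-linear). Since $\Char(K)\neq 2,3$, polarization shows that a symmetric trilinear form is determined by its diagonal values, so it suffices to establish the cubic specialization at $y=z=x$. Using $xxx = 6\, x\pi(x)$, this amounts to the identity
\[q(v)\,(x\pi(x))v = (xv)\,\pi(xv).\]

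For this cubic identity, my plan is to rewrite $(xv)\pi(xv)$ step-by-step until the $C(q,1)$-module structure on $X$ takes over. Applying (D1) with $v^{-1}$ in place of $v$, together with $\theta(xv,v^{-1}) = \tfrac{1}{2}h(xv,x)$ (from (C) and (A3)), gives
\[(xv)\pi(xv) = \tfrac{1}{2}\bigl((xv)\,h(xv,x)\bigr)\,v.\]
Theorem~\ref{eign:quadr}(i) combined with (C) then yields $h(xv,x) = -h(x,xv)^\sigma = -2\,\theta(x,v)^\sigma$, and Theorem~\ref{eign:quadr}(iii) applied with $a=x$, $u=v$, $w=\theta(x,v)^\sigma$, followed by another use of (D1) to rewrite $x\,\theta(x,v) = (x\pi(x))v$, transforms the expression into
\[(xv)\pi(xv) = \bigl(((x\pi(x))v)\,v^\sigma\bigr)\,v - f(v,\theta(x,v))\,(xv).\]
Finally, the $C(q,1)$-module structure (Remark~\ref{rem:module}) supplies $(av)v^\sigma = q(v)\,a$ for all $a\in X$, collapsing the first term to $q(v)\,(x\pi(x))v$.

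It remains to kill the unwanted term $f(v,\theta(x,v))\,(xv)$, which is the only non-obvious part of the computation. Using (C) and the symmetry of $f$, then axiom (B3) with $a=x$ and $b=xv$, and finally the skew-symmetry of $g$, one obtains
\[f(v,\theta(x,v)) = \tfrac{1}{2} f(h(x,xv),v) = \tfrac{1}{2} f(h(xv,xv),1) = g(xv,xv) = 0.\]
I expect the main obstacle to be the sign-bookkeeping involving the involution $\sigma$, and in particular verifying that the identity $(av)v^\sigma = q(v)\,a$ is legitimately available — either via the $C(q,1)$-module structure from Remark~\ref{rem:module}, or equivalently via Theorem~\ref{eign:quadr}(iii) applied with $w=v^\sigma$ together with $f(v,v)=2q(v)$. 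Once this is in hand, everything else is a careful but routine manipulation.
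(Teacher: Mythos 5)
Your proposal is correct, and all the identities you invoke are legitimately available: the polarization reduction to the diagonal works since $\Char(K)\neq 2,3$, the chain $(xv)\pi(xv)=\tfrac12\bigl((xv)h(xv,x)\bigr)v$ via (D1) at $xv$ with $v^{-1}$ (note $(v^{-1})^{-1}=v$, so (A3) gives $\bigl(av^{-1}\bigr)v=a$), the rewriting via Theorem~\ref{eign:quadr}(i),(iii) and (D1), the relation $(av)v^\sigma=q(v)a$ (which follows most directly from (A3) and $v^{-1}=v^\sigma/q(v)$), and the vanishing $f(v,\theta(x,v))=g(xv,xv)=0$ via (B3) and skew-symmetry of $g$ all check out, yielding $(xv)\pi(xv)=q(v)(x\pi(x))v$ as required.

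The route differs from the paper's in the key step. The paper performs the same reduction to the diagonal by linearization, but then disposes of the cubic identity $q(v)\,(x\pi(x))v=(xv)\pi(xv)$ by citing Weiss, \emph{Moufang quadrangles of type $E_6$ and $E_7$} \cite[Theorem 3.18]{W67} (observing that $(x\pi(x))v=x\theta(x,v)$ and that the map $\phi$ appearing there vanishes in characteristic not $2$, and remarking that the argument also covers the pseudo-quadratic case). Your argument instead proves this identity directly from the axioms (A1)--(A3), (B3), (C), (D1) and parts (i), (iii) of Theorem~\ref{eign:quadr}, so it is self-contained within the quadrangular-algebra formalism and does not even need the deeper identity Theorem~\ref{eign:quadr}(v) (axiom (C4)), on which a natural alternative computation would rely. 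What the citation buys the paper is brevity; what your computation buys is a proof internal to the present framework that makes visible exactly which axioms are responsible for the compatibility of the triple product with the $C(q,1)$-module structure.
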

\begin{proof}
It is enough to show that this identity holds for $x=y=z$, since the general result then follows by linearizing.
Thus we have to show that
\[ (x\pi(x))v=\frac{(xv)\pi(xv)}{q(v)}.\]
This follows from \cite[Theorem 3.18]{W67}, since $(x\pi(x))v=x\theta(x,v)$; the map $\phi$ occuring in that formula is identically zero for fields of characteristic not~$2$.
(In {\em loc.\@ cit.\@}, only quadrangular algebras of type $E_6, E_7$ and $E_8$ are considered, but this proof is also valid for pseudo-quadratic spaces.)
%
%
\begin{comment}
Applying Theorem~\ref{eign:quadr}(v) we get
\[(xv)\pi(xv)= q(v) (xv)\pi(x)^\si-f(1,v)(xv)\theta(x,v)^\si+f(\theta(x,v),1)(xv)v^\si.\]
We first rewrite $(xv)\theta(x,v)^\si$ as
\begin{align*}
	(xv)\theta(x,v)^\si
	&= -(x\theta(x,v))v^\si + f(\theta(x,v),v)x \\
	&= -\bigl( (x\pi(x)) v \bigr) v^\si \\
	&= -q(v) x\pi(x),
\end{align*}
so
\begin{align*}
\frac{(xv)\pi(xv)}{q(v)}&=(xv)\pi(x)^\si-\frac{f(1,v)}{q(v)}(xv)\theta(x,v)^\si+f(\theta(x,v),1)x\\
&= -(x\pi(x))v^\si+f(\pi(x),v)x + f(1,v) x \pi(x) + f(\theta(x,v),1)x \\
&= (x\pi(x))(f(1,v)1-v^\si)+(f(\pi(x),v)+f(\theta(x,v),1))x\\
&= (x\pi(x))v.
\tag*{\qedhere}
\end{align*}\end{comment}
\end{proof}

%%%%%%%%%%%%%%%%%%%%%%%%%%%%%%%%%%%%%%%%%%%%%%%%%%%%%%%%%%%%%%%%%%%%%%%%%%%%%%%%%%%%%%%%%%%%%%%%%%%%%%%%%%%%%%%%%%
%                                                                                                                %
%  SECTION : STRUCTURABLE ALGEBRAS ON AN ARBITRARY QUADRANGULAR ALGEBRA                                          %
%                                                                                                                %
%%%%%%%%%%%%%%%%%%%%%%%%%%%%%%%%%%%%%%%%%%%%%%%%%%%%%%%%%%%%%%%%%%%%%%%%%%%%%%%%%%%%%%%%%%%%%%%%%%%%%%%%%%%%%%%%%%
\section{Structurable algebras on arbitrary quadrangular algebras}\label{se:strQA}

In this main section of the paper, we will show that every quadrangular algebra in characteristic not $2$ or $3$ gives rise
to a family of isotopic structurable algebras, in such a way that
certain concepts from the theory of the quadrangular algebras and from the theory of structurable algebras coincide.
\begin{theorem}\label{th:Q-main}
Let $(K,L,q,1,X,\cdot,h,\theta)$ be a quandrangular algebra with $\Char(K) \neq 2,3$.
Then there exists a family of pairwise isotopic structurable algebras on $X$, such that each algebra $\A$ in this family satisfies the following properties:
\begin{compactenum}[\rm (i)]
\item $\A$ has skew-dimension one,
\item $\A$ is a division algebra,
\item there is a skew-symmetric element $s_0 \in \Ss$ such that $U_x (s_0 x) = 3x\pi(x)$ for all $x\in \A$,
\item the conjugate norm of $\A$ is a scalar multiple of $q\circ \pi$,
\item the conjugate inverse in $\A$ behaves as the inverse in the Moufang quadrangle; see section~\textup{\ref{par: inverses}} below.
\end{compactenum}
\end{theorem}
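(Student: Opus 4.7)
My strategy is to construct the structurable algebras so that they realise, up to a scalar multiplier, the \FTS\ on $X$ produced by Theorem~\ref{th: FTS}, and then to read off properties (i)--(v) from the general theory recalled in section~\ref{ss:str}. The starting observation is that Theorem~\ref{th: FTS} already equips $X$ with a canonical simple non-reduced \FTS\ whose quartic form $\langle x,xxx\rangle = -12\,q(\pi(x))$ is anisotropic. By Lemma~\ref{lem: similar and isotopic} it therefore suffices to construct a \emph{single} structurable algebra of skew-dimension one on the $K$-space $X$ whose induced \FTS\ (in the sense of Theorem~\ref{th:FTS skew dim}) is similar to this one; the entire isotopy class will then automatically supply the promised family.

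The construction itself is to be carried out by an explicit Galois descent. Fix $\mu \in K^*$, which will play the role of $s_0^2$, and pass to $E = K(\sqrt{\mu})$. Over $E$ the base-changed \FTS\ acquires strictly regular elements (by Lemma~\ref{lem: sre in F}), and my plan is to realise it as the \FTS\ of a structurable matrix algebra $M(J,\eta)$ in the sense of Example~\ref{ex: skew dim}. The Jordan algebra $J$ and scalar $\eta$ are to be extracted from $(L,q,X,\cdot,h,\theta)$ by using the $C(q,1)$-module decomposition of $X\otimes_K E$ (cf.\ Remark~\ref{rem:module}) to identify the two off-diagonal $J$-components of $M(J,\eta)$, and by using $h$, $\theta$ and $\pi$ to encode the trace form and the sharp product of $J$. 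The non-trivial element of $\Gal(E/K)$ then swaps the two copies of $J$ and descends the $E$-algebra $M(J,\eta)$ to a structurable $K$-algebra on $X$; since the \FTS\ of Theorem~\ref{th: FTS} is $K$-rational, the descent is forced to be compatible with it, and different choices of $\mu$ produce the various isotopic algebras in the family.

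Once such an algebra is in hand, the listed properties fall out almost mechanically. Property (i) is built into the construction. For (iii), linearising the triple product of Theorem~\ref{th:FTS skew dim} at $y=z=w=x$ and using the skew-symmetry $\langle x,x\rangle = 0$ gives $xxx = 2\{x,s_0x,x\}$, so
\[ U_x(s_0 x) = \{x,s_0x,x\} = \tfrac{1}{2}\,xxx = 3\,x\pi(x) \]
by Theorem~\ref{th: FTS}. For (iv), Remark~\ref{rem:norm}(ii) yields $\nu(x) = \tfrac{1}{12\mu}\langle x,xxx\rangle = -\tfrac{1}{\mu}\,q(\pi(x))$, a non-zero scalar multiple of $q\circ\pi$. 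For (ii), the anisotropy of $q$ (which is part of the data of a quadrangular algebra) and of $\pi$ (axiom (D2) of Definition~\ref{def:quad}) makes $\nu$ anisotropic, so Theorem~\ref{th: invers in struct} guarantees that every non-zero element of $\A$ is conjugate invertible; hence $\A$ is a division algebra. Finally, (v) will be established in section~\ref{par: inverses} by matching the explicit formula of Theorem~\ref{th: invers in struct} with the inversion map on the root group sequence of the Moufang quadrangle.

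The main obstacle is the descent step. Pinning down $M(J,\eta)$ over $E$ requires identifying $J$ intrinsically from the quadrangular data rather than only up to the Clifford module structure of $X$, and then verifying that the multiplication and involution of $M(J,\eta)$ are $\Gal(E/K)$-stable so that they actually descend to $K$-linear operations on $X$. Extracting $J$ cleanly from a quadrangular algebra of exceptional type is precisely the kind of delicate computation that the \emph{ad hoc} description of these algebras in \cite{TW,W} was designed to sidestep, and handling it uniformly across all types is the principal technical challenge of the proof.
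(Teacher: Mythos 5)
Your overall strategy coincides with the paper's: equip $X$ with the \FTS\ of Theorem~\ref{th: FTS}, extend scalars by a quadratic extension to make it reduced, realise the extended \FTS\ as a structurable matrix algebra, descend to $K$, and then read off (i)--(v) from the explicit formulas (your derivations of (ii), (iii), (iv) and the appeal to section~\ref{par: inverses} for (v) are exactly how the paper concludes, via Theorem~\ref{th: struct expliciet}). However, the construction itself is where your proposal has genuine gaps, and these are precisely the points the paper has to work hardest on. First, the quadratic extension cannot be $K(\sqrt{\mu})$ for an arbitrarily fixed $\mu\in K^*$: by Lemma~\ref{lem: sre in F}, reducedness requires that some value $-q(\pi(a))$ become a square, so one must take $\delta=\sqrt{-q(\pi(a))}$ and $\Delta=K(\delta)$ for a chosen $a\in X^*$ (Construction~\ref{con:main}, Step~1); with your unconstrained $\mu$ the claim ``the base-changed \FTS\ acquires strictly regular elements'' simply fails. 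Second, your descent argument --- ``the non-trivial element of $\Gal(E/K)$ swaps the two copies of $J$ and descends the $E$-algebra $M(J,\eta)$\dots the descent is forced to be compatible'' --- is false as stated, and you yourself flag it as the unresolved main obstacle. The semilinear extension $\widetilde{\eta}$ of the Galois automorphism is only an isometry of the \FTS; the paper explicitly warns that it is \emph{not} an algebra automorphism of $M(M_1,1)$ and that its fixed points do \emph{not} form a structurable algebra. Compatibility is not forced by $K$-rationality of the \FTS, because the algebra structure on $M(J,\eta)$ is determined by its \FTS\ only up to isotopy. The paper repairs this by passing to the modified matrix algebra $M(M_1,\delta/N(\lambda))$ with the rescaled skew element $s_0'$, conjugating $\widetilde{\eta}$ by the isometry $\widetilde{f}$, and verifying (using properties of norm similarities of Jordan algebras) that the resulting map $\widetilde{\pi}$ \emph{is} an algebra automorphism; only then does descent produce $\A'^{\widetilde{\pi}}$, which is transported to $X$ by the isometry $\tau$ so that the algebra \FTS\ is literally the quadrangular one (this identification, not mere similarity, is what makes your computation in (iii) and the formula $\nu=-\tfrac{1}{\mu}q\circ\pi$ legitimate).

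Two further points. Your Step~2 (extracting the Jordan algebra ``from the $C(q,1)$-module decomposition'' with $h,\theta,\pi$ encoding trace and sharp product) is left entirely undeveloped; the paper instead invokes Ferrar's construction: $J=M_1$ is the $+1$-eigenspace of $x\mapsto u_1'u_2'x$ for a supplementary pair of strictly regular elements, with admissible cubic norm $N(x)=g(u_1',x\pi(x))/g(u_1',e\pi(e))$ (or a quadratic form when this vanishes identically), together with a normalisation $\lambda$ arranging isometry with $M(M_1,1)$. Without this (or a worked-out substitute) there is no matrix algebra to descend. Finally, ``the entire isotopy class automatically supplies the family'' is too quick: properties (iii)--(v) refer to the maps $\pi$ and $q$ on $X$, so the family must consist of the algebras $X(\Omega,a,\lambda)$ obtained from the different admissible choices, whose pairwise isotopy is then proved via Lemma~\ref{lem: similar and isotopic}, as in the last part of Theorem~\ref{th: struct expliciet}.
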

Theorem \ref{th:Q-main} is a consequence of Theorems \ref{th: FTS}, \ref{th: gar} and \ref{th: struct expliciet} below.
\subsection{The main construction}

In order to define a structurable algebra on $X$, we make use of the Freudenthal triple system that we have described in the previous section.
\begin{theorem}\label{th: gar}
Let $(V,t,b)$ be a simple Freudenthal triple system. There exists a structurable algebra $(\A,\bar{\ })$ of skew-dimension one, such that $(V,t,b)$ is isometric to $(\A,\bar{\ })$, considered as a \FTS\ as in Theorem~\textup{\ref{th:FTS skew dim}}.
\end{theorem}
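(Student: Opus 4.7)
The plan is to build a structurable algebra of skew-dimension one whose associated Freudenthal triple system (via Theorem~\ref{th:FTS skew dim}) recovers the given one, proceeding by cases according to whether $V$ is reduced. By simplicity and the nondegeneracy argument in the proof of Lemma~\ref{nonred and simple}, one can always find $x_0 \in V$ with $\langle x_0, x_0 x_0 x_0 \rangle \neq 0$, so setting $\mu := \langle x_0, x_0 x_0 x_0 \rangle / 12 \in K^*$, Lemma~\ref{lem: sre in F} tells us that $V$ is reduced precisely when $\mu \in (K^*)^2$.

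\emph{Reduced case.} Assuming $\mu$ is a square, Lemma~\ref{lem: sre in F} yields a pair of supplementary strictly regular elements $u_1, u_2$ with $\langle u_1, u_2\rangle = 1$. I would use the Peirce-type decomposition
\[ V = K u_1 \oplus J \oplus J' \oplus K u_2, \]
where $J$ and $J'$ are defined as the subspaces orthogonal to $u_1, u_2$ satisfying suitable eigenspace conditions for the operator $x \mapsto u_1 u_2 x$. The triple product then induces a cubic norm structure (equivalently, a Jordan algebra of degree $3$ or degree $\leq 2$) on $J$, with trace form $T$ and sharp $\sharp$ coming from the restriction of $t$, and with norm $N$ obtained from the quartic invariant $x \mapsto \langle x, xxx\rangle$ restricted appropriately. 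I would then take the structurable matrix algebra $\A := M(J, \eta)$ of Example~\ref{ex: skew dim} for a suitable $\eta \in K^*$, identify $V$ with $\A$ via the Peirce decomposition, and verify using the explicit formulas in Example~\ref{ex: skew dim} that the bilinear form and triple product attached to $\A$ (via $s_0 = \operatorname{diag}(1,-1)$) coincide with those of $V$.

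\emph{Non-reduced case.} If $\mu \notin (K^*)^2$, set $E := K(\sqrt{\mu})$ with Galois group $\Gal(E/K) = \langle \tau\rangle$. The scalar extension $V_E := V \otimes_K E$ is still simple, and $\langle x_0, x_0 x_0 x_0\rangle = 12(\sqrt{\mu})^2$ shows that $V_E$ is reduced by Lemma~\ref{lem: sre in F}. The reduced case produces a structurable algebra $\A_E$ of skew-dimension one over $E$ whose FTS is isometric to $V_E$. The plan is now to use Galois descent: the element $\sqrt{\mu} \in E$ enters the construction only through the choice of the generator $s_0$ of the skew part, and I would twist this choice so that $s_0$ becomes $\tau$-anti-invariant while the multiplication and involution become $\tau$-invariant, thereby yielding a $K$-form $\A$ of $\A_E$. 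By construction, $\A$ has skew-dimension one with $s_0^2 = \mu \cdot 1$, and the FTS attached to $\A$ is isometric to $V$ because the scalar extension to $E$ recovers the datum over $E$ and the isometry is Galois-equivariant.

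The main obstacle is the non-reduced case: one must check that the descent can be performed so that the resulting $K$-algebra is genuinely structurable (i.e.\ satisfies the identity~\eqref{struct id}) and of skew-dimension one, and that the isometry $V_E \cong \A_E$ of \FTS s descends to an isometry $V \cong \A$ over $K$. A clean way to organise this is to work throughout with the $\tau$-invariants of an explicit matrix description $M(J_E, \eta)$, showing that the involution, the product and the distinguished generator of $\Ss$ all lie in the fixed subspace, and then invoking Remark~\ref{rem:norm}(i) to absorb any discrepancy of multiplier into the choice of $s_0$, giving an isometry rather than just a similarity as in Lemma~\ref{lem: similar and isotopic}. Once this is arranged, the verification of properties (i)--(v) of Theorem~\ref{th:Q-main} in the subsequent subsections is a routine unpacking of the construction.
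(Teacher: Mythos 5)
Your overall route --- make the triple system reduced after a quadratic scalar extension, realise the reduced system as a structurable matrix algebra via a Peirce-type decomposition attached to a supplementary pair of strictly regular elements, and then descend to $K$ --- is exactly the construction underlying the paper's proof: the paper proves Theorem~\ref{th: gar} simply by citing \cite[Lemma~4.15]{G} (observing that the $56$-dimensionality assumed there is never used), and that lemma is the Ferrar-plus-Galois-descent argument you outline, which the paper then makes explicit in Construction~\ref{con:main}. Your reduced case is essentially \cite[Theorem~5.1]{F} and is fine as an outline.

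The genuine gap is in the non-reduced case, which is the only hard case and is the actual content of the cited lemma. You assert that $\sqrt{\mu}$ ``enters the construction only through the choice of $s_0$'' and propose to take the $\tau$-invariants of the matrix algebra $M(J_E,\eta)$ after checking that the product, the involution and the generator of $\Ss$ lie in the fixed subspace. Done naively, this fails: the semilinear extension $\widetilde{\eta}$ of the Galois automorphism, transported to $M(J_E,\eta)$ through the isometry with $V\otimes_K E$, is an isometry of the associated \FTS\ but is \emph{not} an algebra automorphism of the matrix algebra, and its fixed points do \emph{not} form a structurable algebra. One must first replace $M(J_E,\eta)$ by a different matrix algebra $M(J_E,\eta')$ and rescale the skew generator so that the transported map becomes a genuine semilinear algebra automorphism; only then does descent produce a $K$-algebra, and one still has to check that its \FTS\ is isometric (not merely similar) to $V$. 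This is precisely Step~3 of Construction~\ref{con:main}; your proposal names this as ``the main obstacle'' but supplies neither the twist nor a proof that it exists, so the key step remains unproven. Two smaller points: reducedness is an existential condition over all elements, so from $\mu\notin(K^*)^2$ for one chosen $x_0$ you cannot conclude that $V$ is non-reduced (harmless, since the descent does not need non-reducedness); and the existence of $x_0$ with $\langle x_0,x_0x_0x_0\rangle\neq 0$ follows from axiom (i) of Definition~\ref{def: FTS} together with $\Char(K)\neq 2,3$, not from simplicity as such.
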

\begin{proof}
We can apply the construction in \cite[Lemma 4.15]{G}.
In {\em loc.\@ cit.\@}, a \FTS\ is defined to be of dimension $56$;
it is however easily verified that this construction can be carried out for simple Freudenthal triple systems of arbitrary dimension.
\end{proof}

We want to describe the structurable algebra, obtained by combining Theorems \ref{th: FTS} and \ref{th: gar}, in a more detailed way than in \cite[Lemma 4.15]{G}.
In order to do this we have to make the construction much more explicit. 
The structurable algebra constructed in Theorem \ref{th: gar} is obtained in three steps:
\begin{compactenum}[{\em Step} 1.]
\item We tensor the simple Freudenthal triple system $X$ with a quadratic field extension $\Delta$ such that it becomes reduced. 
\item We apply the proof of \cite[Theorem 5.1]{F} to construct a structurable matrix algebra that is isometric to $X \otimes_K \Delta$.
\item We use the methods from \cite[Lemma 4.15]{G} to apply Galois descent and find a structurable algebra that is isometric to $X$. 
\end{compactenum}

%%%%
%%%%  MAIN CONSTRUCTION STARTS HERE
%%%%
\begin{construction}\label{con:main}
Let  $\Omega = (K,L,q,1,X,\cdot,h,\theta)$ be a quadrangular algebra with char$(K)\neq2,3$,
and consider $X$ as a simple non-reduced \FTS\ as in Theorem \ref{th: FTS}.

\step{1}{Extending scalars to make $X$ reduced}

\noindent
To reduce $X$ we use Lemma \ref{lem: sre in F}. For all $x\in X$, we have $\langle x,xxx\rangle=12(-q(\pi(x)))$.
Since $X$ is not reduced, $-q(\pi(x))$ is never a square in $K$. 

We fix an arbitrary $a\in X^*$ and define $\delta:=\sqrt{-q(\pi(a))}$ in the algebraic closure of $K$, so that $\Delta=K(\delta)$
is a quadratic field extension of $K$; let $\iota$ be the non-trivial element of $\Gal(\Delta/K)$.

We now linearly extend the trilinear product and the bilinear form on $X$ to $X \otimes_K \Delta$. This makes $X\otimes_K \Delta$ into a \FTS.
By our choice of $\Delta$, the \FTS\ $X \otimes_K \Delta$ is reduced. By Lemma~\ref{lem: sre in F} and Theorem~\ref{th: FTS},
\[ u'_1=\tfrac{1}{2} \Bigl( a+\frac{a\pi(a)}{\delta} \Bigr), \quad u'_2=\tfrac{1}{2\delta}\Bigl(-a+\frac{a\pi(a)}{\delta} \Bigr) \]
form a supplementary pair of strictly regular elements.
 
\step{2}{Construction of a structurable matrix algebra isometric to $X \otimes_K \Delta$}

\noindent
We point out that if we say that a structurable matrix algebra $M(J,\eta)$ is isometric to $\XtensorE$, we mean that the \FTS\  $M(J,\eta)$, defined by the formulas for $\langle.,.\rangle$ and $\nu$ in Example~\ref{ex: skew dim}, is isometric to the \FTS\ $\XtensorE$. 

In order to construct a structurable matrix algebra that is isometric to $\XtensorE$, we have to construct a Jordan algebra over $\Delta$. 
We proceed as in~\cite{F}, but we slightly modify the construction which is presented there.
We only give the necessary ingredients, referring the reader to {\em loc.\@ cit.\@} for more details.

For $\epsilon \in \{ 1, -1 \}$, we let
\[ M_\epsilon := \{ x \in \XtensorE \mid u'_1 u'_2 x = \epsilon x \}. \]
As in {\em loc.\@ cit.\@}, we will define a Jordan algebra on the vector space $M_1$.
This Jordan algebra will be constructed either from a quadratic form or from an admissible cubic form. 

If the expression $g(u'_1,y\pi(y))$ is identically zero for $y\in M_1$, then there is a quadratic form $Q$ on $M_1$ making $M_1$ into a Jordan algebra;
in this case we define $N=0$ and $\lambda = 1$.
 
On the other hand, if there exists an $e\in M_1$ such that $g(u'_1,e\pi(e))\neq0$, then
\[ N(x):=\frac{g( u'_1,x\pi(x))}{g(u'_1,e\pi(e))}\]
is an admissible cubic form on $M_1$ with base point $e$, making $M_1$ into a Jordan algebra,
and we let
\[ \lambda := \tfrac{1}{2}g(u'_1,e\pi(e))\in \Delta. \]

It is shown in {\em loc.\@ cit.\@} that in both cases, $\XtensorE$ is isometric to the structurable matrix algebra $M(M_1,\lambda)$
(as defined in Example~\ref{ex: skew dim}).
However, we prefer to slightly modify the construction so that $\XtensorE$ becomes isometric to $M(M_1,1)$.
One obvious way to do this is to redefine the pair of strictly regular elements as $u_1 = \lambda^{-1} u'_1$ and $u_2 = \lambda u'_2$, so that

\[ u_1=\frac{1}{2\la}\Bigl(a+\frac{a\pi(a)}{\delta}\Bigr), \quad u_2=\frac{\la}{2\delta}\Bigl(-a+\frac{a\pi(a)}{\delta}\Bigr);\]
then $\XtensorE$ will indeed be isometric to $M(M_1,1)$. 
Note that the spaces $M_\epsilon$ are unchanged by replacing $u'_1$ and $u'_2$ by $u_1$ and $u_2$, respectively.
 
In {\em loc.\@ cit.\@} it is shown that $\XtensorE=\Delta u_1\oplus \Delta u_2\oplus M_1\oplus M_{-1}$, and that there exists an isomorphism
$t \colon M_{1}\rightarrow M_{-1}$. 
This allows us to explicitly write down the isometry $\psi$ between $\XtensorE$ and $M(M_1,1)$:
\begin{align*}
\psi:\Delta u_1\oplus \Delta u_2\oplus M_1\oplus M_{-1} &\rightarrow \begin{pmatrix}\Delta&M_1\\M_1&\Delta\end{pmatrix} \colon \\
d_1 u_1 +d_2 u_2 +j_1+t(j_2) &\mapsto \begin{pmatrix}d_1&j_1\\j_2&d_2\end{pmatrix},
\end{align*}
for all $d_1, d_2\in \Delta$ and all $j_1, j_2\in M_1$.
So we obtain a structurable algebra $M(M_1,1)$ that is defined over $\Delta$ and isometric to $\XtensorE$.

\step{3}{Galois descent}

\noindent
Our next step is to apply Galois descent to obtain a structurable algebra over $K$ isometric to $X$.
We follow the ideas of \cite[Lemma 4.15]{G}, but we use a more explicit approach in order to obtain exact formulas. 

Let $\widetilde{\eta}$ be the extension of $\iota$ to $\XtensorE$ given by
\[ \widetilde{\eta}(x\otimes d):=x\otimes\iota(d). \]
Since the fixed point set of $\widetilde{\eta}$ in $\XtensorE$ is $X$, we determine how this map acts on $M(M_1,1)$. 
As $\widetilde{\eta}(xyz)=\widetilde{\eta}(x)\widetilde{\eta}(y)\widetilde{\eta}(z)$, the map $\widetilde{\eta}$ is an isometry of the \FTS.
We have $\widetilde{\eta}(u_1)=\frac{-\delta}{N(\lambda)} u_2$, and it follows from $\widetilde{\eta}(u_1u_2x)=-u_1u_2\widetilde{\eta}(x)$ that
$x\in M_{\pm1}$ if and only if $\widetilde{\eta}(x)\in M_{\mp1}$.

The explicit formula for $\widetilde\eta$ is given by
\begin{align*}
&\widetilde{\eta}(d_1 u_1 +d_2 u_2 +j_1+t(j_2)) \\
&\hspace*{12ex} =\iota(d_1)\frac{-\delta}{N(\lambda)} u_2+\iota(d_2) \frac{N(\lambda)}{\delta} u_1 +\widetilde{\eta}(j_1)+\widetilde{\eta}(t(j_2))\\
&\hspace*{12ex} =\iota(d_2) \frac{N(\lambda)}{\delta} u_1+\iota(d_1)\frac{-\delta}{N(\lambda)} u_2  +\widetilde{\eta}(t(j_2))+t(t^{-1}(\widetilde{\eta}(j_1))),
\end{align*}
for all $d_1, d_2 \in \Delta$ and all $j_1, j_2 \in M_1$.
Since $ \widetilde{\eta}(t(j_2)), t^{-1}(\widetilde{\eta}(j_1))\in M_1$, we can translate this into matrix notation using $\psi$, and we get
\[
\renewcommand{\arraystretch}{1.3}
   \widetilde{\eta}: \begin{pmatrix}d_1&j_1\\j_2&d_2\end{pmatrix} \mapsto
   \begin{pmatrix} \frac{N(\lambda)}{\delta}\iota(d_2) &\widetilde{\eta}(t(j_2))\\t^{-1}(\widetilde{\eta}(j_1))&\frac{-\delta}{N(\lambda)}\iota(d_1)\end{pmatrix}.
\renewcommand{\arraystretch}{1} \]

We denote the \FTS\ on $\A:=M(M_1, 1)$ from Example~\ref{ex: skew dim} by $(\A,b,t)$; it follows that $\widetilde{\eta}$ is an isometry of $(\A,b,t)$.
It is important to note, however, that $\widetilde{\eta}$ is in general {\em not} an algebra automorphism of $\A$,
and the fixed points of $\widetilde{\eta}$ in $\A$ do {\em not} form a structurable algebra. 

Following \cite{G}, we consider the structurable algebra $\A':=M(M_1, \frac{\delta}{N(\lambda)})$; denote
the corresponding \FTS\ by $(\A',b',t')$.
We now modify this \FTS\ once more.
Let
\[ s_0'=\frac{N(\lambda)}{\delta}\begin{pmatrix}
1&0\\ 
0&-1
\end{pmatrix} , \]
and consider the \FTS\ associated to $\A'$ with respect to $s_0'$ as in Theorem \ref{th:FTS skew dim}.
Then we obtain the \FTS\ $(\A',b'', t'')$,
where
\[ b''=\frac{N(\lambda)}{\delta}b' \quad \text{and} \quad t''= \frac{N(\lambda)}{\delta}t' \,. \]
The map
\[\widetilde{f}:\A\rightarrow\A':\begin{pmatrix}d_1&j_1\\j_2&d_2\end{pmatrix}\mapsto\begin{pmatrix}\frac{\delta}{N(\lambda)}d_1&j_1\\j_2&d_2\end{pmatrix}\]
is an isometry from $(\A,b,t)$ to $(\A',b'',t'')$.
Now consider the map $\widetilde{\pi}:=\widetilde{f}\widetilde{\eta} \widetilde{f}^{-1}:\A'\rightarrow \A'$,
which is explicitly given by
\[\widetilde{\pi}: \begin{pmatrix}d_1&j_1\\j_2&d_2\end{pmatrix}\mapsto \begin{pmatrix} \iota(d_2) &\widetilde{\eta}(t(j_2))\\t^{-1}(\widetilde{\eta}(j_1))&\iota(d_1)\end{pmatrix}.\] 
It is now obvious that $\widetilde{\pi}$ is an isometry of $(\A',b'',t'')$.
Using some properties of norm similarities of Jordan algebras, one can show that $\widetilde{\pi}$ is, in fact, an algebra automorphism of $\A'$.
 
It follows that $\A'^{\widetilde{\pi}}$, the fixed points of $\A'$ under $\widetilde{\pi}$, is a structurable algebra.
Considered as \FTS s, $\A'^{\widetilde{\pi}}$ and $\A^{\widetilde{\eta}}$ are isometric. Since $\A^{\widetilde{\eta}}$ is in turn isometric to $X$, the map
\[ \tau := \widetilde{f}\circ\psi:X\rightarrow \A'^{\widetilde{\pi}} \]
is an isometry.

We now use this isometry to make $X$ into a structurable algebra isomorphic to $\A'^{\widetilde{\pi}}$, by defining the following multiplication
and involution:
\[ x\star y := \tau^{-1}(\tau(x)\tau(y)) \quad \text{and} \quad \overline{x} := \tau^{-1}(\overline{\tau(x)}) \]
for all $x,y\in X$,
where the multiplication and involution in the right hand sides are as in Example~\ref{ex: skew dim} applied on $\A'$.

We will denote this structurable algebra by
\[ X = X(\Omega, a, \lambda) \,, \]
where $\Omega$ is the quadrangular algebra we started from, and where $a \in X^*$ and $\lambda \in \Delta$
are as in Step 1 and Step 2, respectively.

\hfill {\tt [End of Construction~\ref{con:main}]}
\end{construction}
%%%%
%%%%  MAIN CONSTRUCTION ENDS HERE
%%%%

We can now explicitly write down the structurable algebra $X$ in terms of the original quadrangular algebra.
\begin{theorem}\label{th: struct expliciet}
Let $X = X(\Omega, a, \lambda)$ be as above.
Let
\begin{align*}
{\bf1} &:= \frac{1}{2\delta} \biggl( \la^\si \Bigl( a+\frac{a\pi(a)}{\delta} \Bigr)
	+ \la \Bigl( -a+\frac{a\pi(a)}{\delta} \Bigr) \biggr), \\
s_0 &:= \frac{N(\la)}{2\delta^2} \biggl( \la^\si \Bigl( a+\frac{a\pi(a)}{\delta} \Bigr)
	- \la \Bigl( -a+\frac{a\pi(a)}{\delta} \Bigr) \biggr), \\
\mu &:= \frac{N(\la)^2}{\delta^2}.
\end{align*}
Then $X$ is a structurable algebra with zero element $0 \in X$ and identity element ${\bf 1} \in X$;
$X$ has skew-dimension one, and the subspace $\mathcal{S}$ of skew\nobreakdash-\hspace{0pt}symmetric elements is generated by $s_0$,
with $s_0^2 = \mu$.
The subspace $\mathcal{H}$ of symmetric elements is 
\[\mathcal{H}=\{x\in X\mid \overline{x}=x\}=\{k{\bf 1}+j+\widetilde{\eta}(j)\mid k\in K, j\in M_1\}.\]

Moreover, for all $x,y,z\in X$, we have
\begin{align*}
V_{x,\,s_0 \star y}\,z &= \tfrac{1}{2} \Bigl( xh(y,z)+yh(x,z)+zh(y,x) \Bigr) , \\
(x \star \overline{y}-y \star \overline{x}) \star s_0 &= g(x,y)\bf{1} , \\
\nu(x) &= -\frac{\delta^2}{N(\la)^2} \, q(\pi(x)) ,
\end{align*}
where $\nu$ is the conjugate norm of $X$ (see Remark~\textup{\ref{rem:norm}(ii)}).
If we make other choices for $a' \in X^*$ and $\lambda' \in \Delta$, then the structurable algebras
$X(\Omega, a, \lambda)$ and $X(\Omega, a', \lambda')$ are isotopic.
\end{theorem}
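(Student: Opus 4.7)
The plan is to exploit the fact that the entire structurable algebra structure on $X$ is, by definition, pulled back from the structurable matrix algebra $\A' = M(M_1, \delta/N(\la))$ through the isometry $\tau = \widetilde{f} \circ \psi$ of Construction~\ref{con:main}. I would first identify $\mathbf{1}$ as $\tau^{-1}$ of the identity element of $\A'$: since $\widetilde{f}^{-1}\bigl(\begin{smallmatrix}1&0\\0&1\end{smallmatrix}\bigr) = \bigl(\begin{smallmatrix}N(\la)/\delta&0\\0&1\end{smallmatrix}\bigr)$, one obtains $\mathbf{1} = (N(\la)/\delta)u_1 + u_2$, and substituting the expressions for $u_1, u_2$ from Step~1 gives the stated formula. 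In the same way $s_0 = \tau^{-1}(s_0')$ yields, after substitution, the displayed expression for $s_0$; the identity $s_0^2 = \mu\mathbf{1}$ follows at once from $s_0'{}^2 = (N(\la)/\delta)^2 I$ computed in $M(M_1, \delta/N(\la))$.

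For the description of $\mathcal{H}$, an element of $\A'$ is symmetric iff it has the form $\bigl(\begin{smallmatrix}d&j_1\\j_2&d\end{smallmatrix}\bigr)$ with $d \in \Delta$, and applying the explicit formula for $\widetilde{\pi}$ shows that such an element is in addition fixed by $\widetilde{\pi}$ precisely when $d \in K$ and $j_1 = \widetilde{\eta}(t(j_2))$. Pulling back through $\tau$ and using $\widetilde{\eta}^2 = \id$ together with the fact that $\widetilde{\eta}$ exchanges $M_1$ and $M_{-1}$, the symmetric elements of $\A'^{\widetilde{\pi}}$ are exactly of the form $k\mathbf{1} + j + \widetilde{\eta}(j)$ with $k \in K$ and $j \in M_1$.

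For the three displayed identities, applying Theorem~\ref{th:FTS skew dim} to $\A$ with $s_0$ rearranges as $2V_{x, s_0 \star y}z = xyz + g(y,z)x + g(y,x)z + g(x,z)y$; substituting the expansion of $xyz$ from Theorem~\ref{th: FTS} and using $h(a,b) + h(b,a) = 2h(a,b) - 2g(a,b)\cdot 1$ (an immediate consequence of Theorem~\ref{eign:quadr}(i) and the definition of $\si$), the $g$-terms collapse and the remaining step is to apply the same identity once more in the form $h(y,x) = h(x,y) - 2g(x,y)\cdot 1$. The identity $(x\star\overline{y} - y\star\overline{x})\star s_0 = g(x,y)\mathbf{1}$ is the direct translation via $\tau$ of the first equation of Theorem~\ref{th:FTS skew dim}, together with the fact that the bilinear form of the FTS on $X$ coincides with $g$ (Theorem~\ref{th: FTS}). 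The conjugate norm formula follows from Remark~\ref{rem:norm}(ii), giving $\nu(x) = \frac{1}{12\mu}\langle x, xxx\rangle$, combined with $\mu = N(\la)^2/\delta^2$ and $\langle x, xxx\rangle = -12q(\pi(x))$.

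Finally, for the independence of choices, note that for any admissible pair $(a,\la)$, the Freudenthal triple system associated to $X(\Omega, a, \la)$ via Theorem~\ref{th:FTS skew dim} applied with the corresponding $s_0$ equals, by construction (since $\tau$ is an isometry of FTS), the canonical FTS from Theorem~\ref{th: FTS}. Hence any two such structurable algebras have identical --- a fortiori similar --- associated Freudenthal triple systems, and Lemma~\ref{lem: similar and isotopic} yields that they are isotopic. The main obstacle in the proof is the $V$-operator identity, where one must carefully keep track of the $g$-terms in both expansions to see that they really agree; all other assertions are essentially immediate once $\mathbf{1}$, $s_0$, and the effect of $\tau^{-1}$ on the matrix formulas have been computed.
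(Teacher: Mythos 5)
Your proposal is correct and follows essentially the same route as the paper: everything is pulled back through the isometry $\tau$, the elements ${\bf 1}$, $s_0$ and the space $\mathcal{H}$ are computed explicitly in the matrix model, the $V$-operator and bilinear-form identities are obtained from Theorem~\ref{th:FTS skew dim} together with $h(y,z)+h(z,y)=2\bigl(h(y,z)-g(y,z)1\bigr)$, the norm formula from Remark~\ref{rem:norm}(ii), and the independence of choices from Lemma~\ref{lem: similar and isotopic}. The only cosmetic difference is that you characterize the symmetric $\widetilde{\pi}$-fixed matrices directly in $\A'$, whereas the paper computes the involution on $X$ and uses the complement $Ks_0$; both amount to the same calculation.
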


\begin{proof}
By definition, the isometry $\tau$ is an isomorphism from the structurable algebra $X$ to the structurable algebra $\A'^{\widetilde{\pi}}$,
which is known to be of skew-dimension one.
In particular, the zero element and the identity element of $X$ are equal to $0 = \tau^{-1}(0)$ and
${\bf1} := \tau^{-1}\left( \begin{smallmatrix}1&0\\0&1\end{smallmatrix} \right)$, respectively.
Moreover, $X$ has skew-dimension one, and the set $\mathcal{S}$ of skew-symmetric elements is generated by 
$s_0 := \tau^{-1}(s'_0)$.
We now perform some explicit calculations. %, we first determine the map $\tau^{-1}$:

Notice that all elements in $\A'^{\widetilde{\pi}}$ are of the form
\[ \begin{pmatrix}d&j\\t^{-1}(\widetilde{\eta}(j))&\iota(d)\end{pmatrix} \]
for some $d\in \Delta$ and $j\in M_1$.
We compute $\tau^{-1} := \psi^{-1}\circ \widetilde{f}^{-1} \colon \A'^{\widetilde{\pi}} \rightarrow X$:
\begin{align*}
\begin{pmatrix}d&j\\t^{-1}(\widetilde{\eta}(j))&\iota(d)\end{pmatrix}%\in \A'^{\widetilde{\pi}}	
&\xmapsto{ \widetilde{f}^{-1}}\begin{pmatrix}\frac{N(\la)}{\delta}d&j\\t^{-1}(\widetilde{\eta}(j))&\iota(d)\end{pmatrix}\in\A^{\widetilde{\eta}} \\[.5ex]
&\xmapsto{ \psi^{-1}}\frac{N(\la)}{\delta}du_1+\iota(d)u_2+j+\widetilde{\eta}(j)\in X.
\end{align*}

%One can check that $\frac{N(\la)}{\delta}du_1+\iota(d)u_2+j+\widetilde{\eta}(j)\in \XtensorE$ is indeed fixed by $\widetilde{\eta}$, and thus is in $X$.
Now we can determine
\begin{align*}
{\bf 1} &= \tau^{-1}\begin{pmatrix}1&0\\0&1\end{pmatrix}=\frac{N(\lambda)}{\delta}u_1+u_2 \\
	&= \frac{1}{2\delta}\biggl(\la^\si \Bigl(x+\frac{x\pi(x)}{\delta}\Bigr)+\la\Bigl(-x+\frac{x\pi(x)}{\delta}\Bigr)\biggl), \\[1ex]
s_0 &= \tau^{-1}\begin{pmatrix}\frac{N(\lambda)}{\delta}&0\\ 0&-\frac{N(\lambda)}{\delta}\end{pmatrix}
	= \frac{N(\lambda)^2}{\delta^2}u_1-\frac{N(\lambda)}{\delta}u_2 \\
	&= \frac{N(\la)}{2\delta^2} \biggl( \la^\si \Bigl( x+\frac{x\pi(x)}{\delta} \Bigr)
		- \la \Bigl( -x+\frac{x\pi(x)}{\delta} \Bigr) \biggr),\\[1ex]
s_0\star s_0 &= \tau^{-1}(s'_0s'_0)=\tau^{-1}\left(\frac{N(\lambda)^2}{\delta^2} \begin{pmatrix}1&0\\ 0&1\end{pmatrix}\right)
	= \frac{N(\lambda)^2}{\delta^2}{\bf 1}. 
\end{align*}
We determine how the involution of the structurable algebra $\A'$ acts on $X$.
We have $\overline{x}=\tau^{-1}(\overline{\tau(x)})$, therefore
\begin{multline*}
\overline{\frac{N(\la)}{\delta}du_1+\iota(d)u_2+j+\widetilde{\eta}(j)} = \tau^{-1}\left(\overline{
	\begin{pmatrix}d&j \\
	t^{-1}(\widetilde{\eta}(j))&\iota(d)\end{pmatrix}}\right) \\
= \tau^{-1}\left(
	\begin{pmatrix}\iota(d)&j \\
	t^{-1}(\widetilde{\eta}(j))&d\end{pmatrix}
	\right) = \frac{N(\la)}{\delta}\iota(d)u_1+du_2+j+\widetilde{\eta}(j).
\end{multline*}
Since ${\bf 1} =\frac{N(\lambda)}{\delta}u_1+u_2$, it follows that each element in the $K$-vector subspace $\{k{\bf 1}+j+\widetilde{\eta}(j)\mid k\in K, j\in M_1\}$ is fixed by the involution.
Since
\[ \{k{\bf 1}+j+\widetilde{\eta}(j)\mid k\in K, j\in M_1\}\oplus Ks_0=X, \]
we conclude that $\mathcal{H}=\{k{\bf 1}+j+\widetilde{\eta}(j)\mid k\in K, j\in M_1\}$. 
 
By the definition of $\star$ and $x \mapsto \overline{x}$, we have
\[ V_{x,\,s_0 \star y}\,z = \tau^{-1} \bigl( V_{\tau(x),\,s_0'\tau(y)}\,\tau(z) \bigr) . \]
It follows from Theorem~\ref{th:FTS skew dim} that
\begin{multline*}
	V_{x,\,s_0 \star y}\,z = \tau^{-1} \Bigl( \tfrac{1}{2}(\tau(x)\tau(y)\tau(z))+\langle \tau(y),\tau(z)\rangle \tau(x) \\
	+ \langle \tau(y),\tau(x)\rangle \tau(z)+\langle \tau(x),\tau(z)\rangle \tau(y) \Bigr)  .
\end{multline*}
Since $\tau$ is an isometry we have
\begin{align*}
V_{x,\,s_0 \star y}\,z
&= \tfrac{1}{2}(xyz+\langle y,z\rangle x+\langle y,x\rangle z+\langle x,z\rangle y)\\
&= \tfrac{1}{2} \Bigl( \tfrac{1}{2} \bigl( x(h(y,z)+h(z,y))+y(h(x,z)+h(z,x))+z(h(x,y)+h(y,x)) \bigr) \\
	&\hspace*{12ex} + g(y,z) x+ g(y,x) z+g(x,z) y \Bigr) \\
&= \tfrac{1}{2}( xh(y,z)+yh(x,z)+zh(y,x)),
\end{align*}
where the last step follows from
\begin{align*}
	h(y,z)+h(z,y) &= h(y,z)-\overline{h(y,z)} \\
	&= 2h(y,z)-f(h(y,z),1)1=2 \bigl( h(y,z)-g(y,z)1 \bigr).
\end{align*}
Again it follows from Theorem~\ref{th:FTS skew dim} and Theorem~\ref{th: FTS} that
\begin{align*}
(x \star \overline{y}-y \star \overline{x}) \star s_0 &= \tau^{-1} \Bigl( \bigl( \tau(x)\overline{\tau(y)}-\tau(y)\overline{\tau(x)} \bigr) s'_0 \Bigr) \\
&= \tau^{-1} \bigr( \langle \tau(x),\tau(y)\rangle 1 \bigr) \\
&= \langle x,y\rangle {\bf1} = g(x,y) {\bf1}; \\[1ex]
\nu(x) &= \frac{1}{12\mu}\langle x,xxx\rangle=-\frac{\delta^2}{N(\la)^2}q(\pi(x)).
\end{align*}
Finally, if we make other choices for $a' \in X^*$ and $\lambda' \in \Delta$, then the structurable algebras
$X(\Omega, a, \lambda)$ and $X(\Omega, a', \lambda')$ are, by construction, isometric as \FTS s to the \FTS \ $X$.
It follows from Lemma \ref{lem: similar and isotopic} that they are isotopic.
\end{proof}

\begin{remark}
	In the case of quadrangular algebras of type $E_6$, $E_7$ and $E_8$, we can actually take $\lambda = 1$,
	in which case the formulas of Theorem~\ref{th: struct expliciet} look nicer;
	see Lemma~\ref{le:E1} below.
	We do not know whether we can always take $\lambda = 1$ in the pseudo-quadratic case.
\end{remark}
Two quadrangular algebras are isotopic if and only if they describe the same Moufang quadrangle.
For a precise definition and some properties, we refer to \cite[Chapter 8]{W}.
In the following lemma we observe that when we construct two structurable algebras starting from two isotopic quadrangular algebras,
we end up with isotopic structurable algebras.

\begin{lemma} Let $\Omega = (K,L,q,1,X,\cdot,h,\theta)$ and $\Omega' = (K,L',q',1',X',\cdot',h',\theta')$ be two isotopic quadrangular algebras.
Then the structurable algebras constructed from $X$ and from $X'$ as in the previous corollary are isotopic.
\end{lemma}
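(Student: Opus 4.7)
The plan is to reduce everything to the similarity/isotopy correspondence established in Lemma~\ref{lem: similar and isotopic}. By Theorem~\ref{th: struct expliciet}, the structurable algebra $X(\Omega,a,\la)$ is independent of the choice of $a\in X^*$ and $\la\in\Delta$ up to isotopy, so it suffices to exhibit, for some arbitrary choices on each side, an isotopy between $X(\Omega,a,\la)$ and $X(\Omega',a',\la')$. Since isotopy of structurable algebras of skew-dimension one coincides with similarity of the associated Freudenthal triple systems (Lemma~\ref{lem: similar and isotopic}), it is enough to show that the Freudenthal triple systems on $X$ and $X'$ provided by Theorem~\ref{th: FTS} are similar.

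First I would unpack the definition of isotopy of quadrangular algebras from \cite[Chapter 8]{W}, which amounts to a compatible pair of $K$-linear isomorphisms $\varphi\colon L\to L'$ and $\psi\colon X\to X'$, together with scalars governing how the quadratic form $q$, the hermitian-type form $h$, and the map $\theta$ transform. The key output I need is the transformation law for the maps $x\mapsto x\pi(x)$ and $x\mapsto q(\pi(x))$, since by Theorem~\ref{th: FTS} these two maps, via linearization, determine the trilinear product $t$ and the 4-linear form $(x,y,z,w)\mapsto \langle x,yzw\rangle$ of the Freudenthal triple system. A direct inspection of the isotopy axioms shows that $\psi$ sends $x\pi(x)$ to a scalar multiple of $\psi(x)\pi'(\psi(x))$ and sends $q(\pi(x))$ to a (related) scalar multiple of $q'(\pi'(\psi(x)))$; keeping track of the two scalars and comparing them gives a single factor $\la\in K^*$ such that
\[ b'(\psi(x),\psi(y))=\la\, b(x,y), \qquad b'\!\bigl(\psi(x),\psi(x)\psi(x)\psi(x)\bigr)=\la^2\, b(x,xxx), \]
which by the criterion in Definition~\ref{def: similarity} (and \cite[Lemma 6.6]{F}) says precisely that $\psi$ is a similarity of Freudenthal triple systems.

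Applying Lemma~\ref{lem: similar and isotopic} then turns this similarity of FTS into an isotopy of the corresponding structurable algebras of skew-dimension one, which completes the argument. The only delicate point, and the place I expect to spend the most effort, is the bookkeeping in the first step: unwinding how the various scalars attached to an isotopy of quadrangular algebras combine so as to produce a \emph{single} multiplier $\la$ shared by $b$ and by the quartic form $x\mapsto b(x,xxx)$. Once that scalar is identified, the rest is purely formal, invoking Theorem~\ref{th: FTS}, Lemma~\ref{lem: similar and isotopic}, and the isotopy-invariance statement at the end of Theorem~\ref{th: struct expliciet}.
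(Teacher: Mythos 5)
Your proposal follows essentially the same route as the paper's proof: the paper likewise reduces the statement to showing that the two Freudenthal triple systems are similar and then invokes Lemma~\ref{lem: similar and isotopic}, carrying out your ``delicate bookkeeping'' step concretely by writing $\Omega'\cong\Omega_u$ for some $u\in L$ and using the formulas of \cite[Proposition 8.1]{W} to obtain $\psi(x)\cdot'\pi'(\psi(x))=q(u)^{-1}\,x\pi(x)$, i.e.\ a similarity with multiplier $q(u)^{-1}$. The only minor difference is that the paper linearizes this single identity and checks the trilinear-product condition of Definition~\ref{def: similarity} directly, so it never has to track separately how the bilinear form and the quartic form $x\mapsto\langle x,xxx\rangle$ transform, whereas you plan to verify the two-condition criterion of \cite[Lemma 6.6]{F}; both variants work.
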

\begin{proof}
By Lemma \ref{lem: similar and isotopic} it suffices to show that $X$ and $X'$ have isometric \FTS s, since the \FTS s of the quadrangular algebra are isometric to the \FTS s of the obtained structurable algebras.

If $\Omega$ and $\Omega'$ are isotopic, then $\Omega'$ is isomorphic to the isotope $\Omega_u$ for some $u\in L$;
we denote the corresponding isomorphisms from $L_u$ to $L'$ and from $X_u$ to $X'$ by $\alpha$ and $\psi$, respectively.
We use the following formulas from \cite[Proposition 8.1]{W}:
\begin{align*} 
1' &= \alpha(u) , \\
\theta'(\psi(x),\alpha(v)) &= q(u)^{-1}\theta(x,v) , \\
\psi(x) \cdot' \alpha(v) &= (xv)u^{-1} ,
\end{align*}
for all $x\in X$ and all $v\in L$.
It follows that
\begin{align*}
	\psi(x)\cdot'\pi'(\psi(x)) &= \psi(x)\cdot'\theta'(\psi(x),1') = q(u)^{-1}(x\theta(x,u))u^{-1} \\
	&= q(u)^{-1} \bigl( (x\pi(x))u \bigr) u^{-1} = q(u)^{-1}x\pi(x).
\end{align*}
By linearizing this expression we obtain that the \FTS s are similar with isometry $\psi$ and multiplier $q(u)^{-1}$.
\end{proof}
\begin{remark}
	We do not know whether the converse also holds, in other words, whether the fact that the structurable algebras are isotopic implies
	that the quadrangular algebras are isotopic.
\end{remark}

The compatibility of the Freudenthal triple system with the $C(q,1)$-module structure of $X$ translates
into the following corollary, showing that
the $V$-operators of the structurable algebra in Theorem~\ref{th: struct expliciet} also behave well
with respect to the $C(q,1)$-module structure of $X$.
\begin{corollary}\label{co: V-mod}
For all $x,y,z\in X$ and $v\in L\setminus \{0\}$ we have that
\[(V_{x,\,s_0 \star y}\,z)v=\frac{V_{xv,\,s_0 \star (yv)}\,zv}{q(v)} \,.\]
\end{corollary}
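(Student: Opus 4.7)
My plan is to bootstrap the corollary from two ingredients already in hand: the explicit formula for the $V$-operators in terms of the Freudenthal triple product obtained in the proof of Theorem~\ref{th: struct expliciet}, and the compatibility of that triple product with the $C(q,1)$-module structure recorded in Theorem~\ref{th: module FTS}. Concretely, in the course of proving Theorem~\ref{th: struct expliciet} it was shown that
\[ V_{x,\,s_0\star y}\,z = \tfrac{1}{2}\bigl(xyz + \langle y,z\rangle x + \langle y,x\rangle z + \langle x,z\rangle y\bigr), \]
where $xyz$ denotes the FTS triple product. Multiplying this identity by $v$ on the right and using $K$-linearity of the module action reduces the corollary to two separate scaling statements: $(xyz)v = (xv)(yv)(zv)/q(v)$ and $\langle xv, yv\rangle = q(v)\langle x,y\rangle$ (together with its cyclic companions).

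The first of these scalings is exactly Theorem~\ref{th: module FTS}, so nothing further is needed there. The second — that the symplectic form $g$ scales by $q(v)$ under the module action — is the only genuinely new ingredient, and is the step I expect to be the main (though short) obstacle.

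To establish $g(xv,yv) = q(v)\,g(x,y)$, I would start from $f(h(xv,yv),1)$ and chain the quadrangular-algebra axioms as follows. Axiom (B3) rewrites this as $f(h(x,yv),v)$; axiom (B2) then replaces $h(x,yv)$ by $h(y,xv) + f(h(x,y),1)\,v$, producing $f(h(y,xv),v) + 2q(v)\,f(h(x,y),1)$. A second application of (B3) converts $f(h(y,xv),v)$ into $f(h(yv,xv),1)$, and Theorem~\ref{eign:quadr}(i) together with the elementary fact $f(u^\sigma,1) = f(u,1)$ (which follows from $u + u^\sigma = f(1,u)\cdot 1$ and $f(1,1) = 2$) shows this last quantity equals $-f(h(xv,yv),1)$. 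Rearranging gives $2f(h(xv,yv),1) = 2q(v)\,f(h(x,y),1)$, hence the desired scaling for $g$.

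With both scalings in hand, the conclusion is immediate: substituting them into the $V$-operator formula gives
\[ (V_{x,\,s_0\star y}\,z)\,v = \tfrac{1}{2q(v)}\bigl((xv)(yv)(zv) + \langle yv,zv\rangle\,xv + \langle yv,xv\rangle\,zv + \langle xv,zv\rangle\,yv\bigr), \]
and the right-hand side is precisely $V_{xv,\,s_0\star(yv)}\,(zv)/q(v)$ by the same $V$-operator formula applied at $xv, yv, zv$.
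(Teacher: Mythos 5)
Your proposal is correct and follows essentially the same route as the paper: the paper's proof likewise combines the $V$-operator formula from the proof of Theorem~\ref{th: struct expliciet} with the triple-product scaling of Theorem~\ref{th: module FTS} and the identity $g(xv,yv)=q(v)\,g(x,y)$. The only difference is that the paper simply cites \cite[Proposition 4.18]{W} for that last identity, whereas you derive it directly from axioms (B2), (B3) and Theorem~\ref{eign:quadr}(i); your derivation is valid (including the use of $f(v,v)=2q(v)$ and $f(u^\sigma,1)=f(u,1)$).
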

\begin{proof}
From Theorem \ref{th: module FTS} we have that $(xyz)v=\frac{(xv)(yx)(zv)}{q(v)}$, and from \cite[Proposition 4.18]{W} it follows that $g(xv,yv)=g(x,y)q(v)$.
Hence
\begin{align*}
	V_{xv,\,s_0 \star (yv)}\,zv &= \tfrac{1}{2} \Bigl( (xv)(yv)(zv)+\langle (yv),(zv)\rangle xv \\
		&\hspace*{22ex} +\langle (yv),(xv)\rangle zv+\langle (xv),(zv)\rangle yv \Bigr) \\
	&= \frac{q(v)}{2} \Bigl( (xyz)v+\langle y,z\rangle xv+\langle y,x\rangle zv+\langle x,z\rangle yv \Bigr) \\
	&= q(v)(V_{x,\,s_0 \star y}\,z)v.
	\tag*{\qedhere}
\end{align*}
\end{proof}

\subsection{Inverses in quadrangular algebras and in structurable algebras}\label{par: inverses}

In this section we show that there is a relation between the conjugate inverse of an element in a structurable algebra
and a map that behaves like an inverse in a quadrangular algebra.

In Theorem \ref{th: invers in struct} the conjugate inverse of elements in structurable algebras of skew-dimension one is characterized.
We apply this to the structurable algebra we obtained in Theorem~\ref{th: struct expliciet}. 

Since the map $\nu$ is a scalar multiple of  $q\circ \pi$ and  $q\circ \pi$ is anisotropic, each element in $X$ except $0$ has a conjugate inverse,
and hence the structurable algebra $X$ is a division algebra.
For each $u\in X\setminus \{0\}$ the conjugate inverse is given by
\begin{equation}\label{eqn: conj inv}
	\hat{u}=-\frac{1}{3\mu\nu(u)}s_0 \star \{u,s_0 \star u,u\}=s_0\frac{u\pi(u)}{q(\pi(u))}\,.
\end{equation}
In quadrangular algebras, we also encounter a certain expression that behaves like an inverse.
This expression occurs in the calculation of the so-called $\mu$-maps of a Moufang polygon;
these $\mu$-maps are certain automorphisms of order two of the Moufang polygon reflecting an apartment.
We refer to \cite[Chapter 6]{TW} for the precise definition (which is not relevant for our purposes).

In \cite[Chapter 32]{TW}, these $\mu$-maps are calculated explicitly.
We will illustrate with a few examples in which sense the $\mu$-map behaves like an inverse.

\begin{examples}
\begin{compactenum}[(1)]
    \item
In the case of Moufang triangles, the root groups are parametrized by an alternative division algebra $A$; see section~\ref{sss:triangles}.
Then for every $t \in A \setminus \{ 0 \}$, we have
\[\mu(x_1(t)) = x_4(t^{-1}) \, x_1(t) \, x_4(t^{-1}).\]

    \item
In the case of Moufang hexagons, the root group $U_1$ is parametrized by an anisotropic cubic norm structure $J$; see section~\ref{sss:hexagons}.
For each $a \in J \setminus \{ 0 \}$, we define $a^{-1} := a^\sharp / N(a)$, and we have
\[\mu(x_1(a))=x_7(a^{-1}) \, x_1(a) \, x_7(a^{-1}) .\]
    \item
In the case of Moufang quadrangles of quadratic form type, the root group $U_4$ is parametrized by a vector space $V$ equipped with
an anisotropic quadratic form $q$; see section~\ref{sss:quadrangles}.
For each $v \in V \setminus \{ 0 \}$, we have
\[ \mu(x_4(v))=x_0(v / q(v)) \, x_4(v) \, x_0(v / q(v)) . \]
Observe that the element $v / q(v)$ is precisely the inverse of $v$ in the Clifford algebra $C(q)$.

    \item
In our last example, we consider Moufang quadrangles arising from a quadrangular algebra $\Omega = (K,L,q,1,X,\cdot,h,\theta)$.
In this case, $U_1$ is parametrized by $X\times K$, equipped with a group operation
\[ (a,s) \cdot (b,t) = (a+b, s+t+g(b,a)) \]
for all $a,b \in X$ and all $s,t \in K$; see \cite[Chapter 11]{W}.
In the $E_6$, $E_7$ and $ E_8$ case, the corresponding $\mu$-map is calculated in \cite[(32.8)]{TW}, but it is easily verified
that this formula holds for any quadrangular algebra.
For all $x\in X \setminus \{ 0 \}$, we have
\[\mu(x_1(x,0))=x_5 \Bigl( \frac{x\pi(x)}{q(\pi(x))},0 \Bigr) \, x_1(x,0) \, x_5 \Bigl( \frac{x\pi(x)}{q(\pi(x))},0 \Bigr) .\]
(The general formula for $\mu(x_1(x,s))$ is more involved, but similar in style.)
Observe that the expression $\frac{x\pi(x)}{q(\pi(x))}$ is almost equal to the conjugate inverse of $x$; see equation~\eqref{eqn: conj inv}.
The fact that the $s_0$ is missing is explained by the fact that in the expression
\[ V_{u, \hat u} \, y = V_{u,\,s_0 \star \frac{u\pi(u)}{q(\pi(u))}} \, y = y , \]
the $s_0$ is necessary to translate the $V$-operator into the quadrangular algebra context;
see Theorem~\ref{th: struct expliciet}.
\end{compactenum}
\end{examples}

%%%%%%%%%%%%%%%%%%%%%%%%%%%%%%%%%%%%%%%%%%%%%%%%%%%%%%%%%%%%%%%%%%%%%%%%%%%%%%%%%%%%%%%%%%%%%%%%%%%%%%%%%%%%%%%%%%
%                                                                                                                %
%  SECTION : STRUCTURABLE ALGEBRAS ON A PSEUDO-QUADRATIC SPACE                                                   %
%                                                                                                                %
%%%%%%%%%%%%%%%%%%%%%%%%%%%%%%%%%%%%%%%%%%%%%%%%%%%%%%%%%%%%%%%%%%%%%%%%%%%%%%%%%%%%%%%%%%%%%%%%%%%%%%%%%%%%%%%%%%
\section{Structurable algebras on pseudo-quadratic spaces}\label{se:pqQA}\label{par: pseudo-quadratic}

There is a standard procedure to construct a structurable algebra from a hermitian form;
this is precisely case (3) from the classification we mentioned in Remark~\ref{rem:cssa}.
\begin{theorem}[{\cite[Examples 8.iii]{A0}}]\label{th: hermitian form}
Suppose $(\mathcal{E},\bar{\ })$ is a unital associative algebra over a field $K$ with involution $\bar{\ }$.
Let $\mathcal{W}$ be a unital left $\mathcal{E}$-module.
Suppose $h:\mathcal{W}\times\mathcal{W}\rightarrow \mathcal{E}$ is a hermitian form, i.e.
\begin{compactitem}
\item h is bilinear over $K$ and $h(e w_1,w_2)=eh( w_1,w_2)$,
\item $\overline{h(w_1,w_2)}=h(w_2,w_1)$,
\end{compactitem}
for all $e \in \mathcal{E}, w_1,w_2\in \mathcal{W}$.
Then $\mathcal{E}\oplus\mathcal{W}$ is a structurable algebra with the following involution and multiplication:
\begin{align*}
	\overline{e+w} &= \overline{e}+w, \\
	(e_1+w_1)(e_2+w_2) &= (e_1e_2+h(w_2,w_1))+(e_2 w_1+\overline{e_1} w_2),
\end{align*}
for all $e,e_1,e_2 \in \mathcal{E}$ and all $w,w_1,w_2\in \mathcal{W}$.
\end{theorem}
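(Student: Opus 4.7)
The plan is to verify the three defining properties of a structurable algebra for $\mathcal{A} := \mathcal{E}\oplus\mathcal{W}$: that $\mathcal{A}$ is unital, that the map $\overline{\phantom{x}}$ is an involution, and that the pentad identity \eqref{struct id} holds.

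First, I would observe that $1_{\mathcal{E}} + 0$ is a two-sided identity, which follows at once from the product formula together with the hypothesis that $\mathcal{W}$ is a unital left $\mathcal{E}$-module and the fact that $\overline{1_{\mathcal{E}}} = 1_{\mathcal{E}}$. Next, I would check that $\overline{\phantom{x}}$ is a $K$-linear, order-two anti-automorphism of $\mathcal{A}$. Linearity and order two are immediate from the definition. For the anti-automorphism property, one expands both $\overline{(e_1+w_1)(e_2+w_2)}$ and $\overline{e_2+w_2}\cdot\overline{e_1+w_1}$; the $\mathcal{E}$-components agree thanks to the hermitian property $\overline{h(w_2,w_1)} = h(w_1,w_2)$ combined with $\overline{e_1 e_2} = \overline{e_2}\,\overline{e_1}$, while the $\mathcal{W}$-components $e_2 w_1 + \overline{e_1} w_2$ match on the nose.

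The main obstacle is the pentad identity. My approach would be to exploit $K$-multilinearity and split the verification according to the type of each of $x,y,z,w$ (each lying in $\mathcal{E}$ or in $\mathcal{W}$), giving sixteen cases. In the all-$\mathcal{E}$ case, the identity reduces to the classical fact that any associative algebra with involution is structurable, which is a direct consequence of associativity; this is noted in the paper right after Definition of structurable algebras. The mixed cases use the associativity of the module action together with the hermitian property and its immediate consequence $h(w_1, e w_2) = h(w_1,w_2)\overline{e}$ (obtained by applying $\overline{\phantom{x}}$ to $\overline{h(w_1,e w_2)} = h(e w_2, w_1) = e\, h(w_2,w_1)$).

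The most delicate case is the one in which all four arguments $x,y,z,w$ lie in $\mathcal{W}$. Here $\{x,y,z\}$ has both an $\mathcal{E}$-component, gathered from terms of the form $h(z,y)$, $h(z,x)$, $h(y,x)$, and a $\mathcal{W}$-component which is zero once one unfolds the formula for $V_{x,y}z$, since each summand has a scalar in $\mathcal{E}$ acting on an element of $\mathcal{W}$. Expanding both sides of \eqref{struct id} produces a handful of terms in $\mathcal{E}\oplus\mathcal{W}$, and the required cancellations follow mechanically from associativity together with the bi-$\mathcal{E}$-linearity of $h$ (left-linear in the first slot, conjugate-linear in the second). The calculation is tedious but routine; since this is Allison's classical construction from \cite[Examples 8.iii]{A0}, one may alternatively simply invoke the cited reference.
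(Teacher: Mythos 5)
The paper itself offers no argument here beyond the citation to Allison, so your fallback of invoking \cite[Examples 8.iii]{A0} is exactly what the paper does, and the overall strategy of a direct multilinear verification is also perfectly viable. However, the sketch you give of that verification has two concrete defects. First, your description of the ``most delicate'' case is backwards: for $x,y,z\in\mathcal{W}$ one computes $x\overline{y}=xy=h(y,x)\in\mathcal{E}$, and then, because the product of a scalar $e\in\mathcal{E}$ with $w\in\mathcal{W}$ is $e\cdot w=\overline{e}\,w$ (left multiplication acts through the involution --- this is easy to overlook), one gets $(x\overline{y})z=\overline{h(y,x)}\,z=h(x,y)z$, and altogether
\[
\{x,y,z\}=h(x,y)z+h(z,y)x-h(z,x)y\in\mathcal{W},
\]
so the $\mathcal{E}$-component is zero and the $\mathcal{W}$-component is the nontrivial part --- the opposite of what you assert. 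Since all the content of the theorem sits in this case (and in the mixed cases), a sign- or component-level slip here is not cosmetic: the cancellations you call ``mechanical'' are precisely where the hermitian symmetry and the twist $e\cdot w=\overline{e}\,w$ must conspire, and they need to be carried out, not just announced.

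Second, \eqref{struct id} is an identity of operators on all of $\mathcal{A}=\mathcal{E}\oplus\mathcal{W}$, so splitting only $x,y,z,w$ by type is not enough: the element the operators act on must also range over $\mathcal{E}$ and $\mathcal{W}$ (32 cases rather than 16). In particular your ``all-$\mathcal{E}$'' case does not simply reduce to the fact that an associative algebra with involution is structurable; when $x,y,z,w\in\mathcal{E}$ but the operators are evaluated on an element of $\mathcal{W}$, the module action and the twist by the involution enter and must be checked separately. Your checks of the unit, of the anti-automorphism property of the involution, and the auxiliary identity $h(w_1,ew_2)=h(w_1,w_2)\overline{e}$ are all correct; but as written the core of the proof is either deferred to a computation whose shape you have misdescribed, or to the citation --- in which case the honest course is simply to cite Allison, as the paper does.
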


When we start with a pseudo-quadratic space, we have a skew-hermitian form at our disposal;
see Definition~\ref{def:pqs} above.
In characteristic different from two there is a standard procedure to make a skew-hermitian form into a hermitian form. 

We show that when we use this method to construct a structurable algebra on a pseudo-quadratic space
defined over a {\em quadratic pair},
we get an algebra that is isotopic to the family of isotopic algebras constructed in Theorem \ref{th: struct expliciet}.

\begin{definition}[{\cite[Definition 1.12]{W}}]
Let $L$ be a skew field and $\si$ an involution of $L$.
We call $(L,\si)$ a {\em quadratic pair%
\footnote{This notion, taken from \cite[Definition 1.12]{W}, is quite different from the notion of a quadratic pair as defined in the Book of Involutions \cite{KMRT},
and has nothing to do with the notion of a quadratic pair in (finite) group theory either.}},
if either
\begin{compactenum}[(i)]
    \item
	$L/K$ is a separable quadratic field extension and $\si$ is the generator of the Galois group; or
    \item
	$L$ is a quaternion algebra over $K$ and $\si$ is the standard involution.
\end{compactenum}
Define $q(u)=uu^\si$; then $(K,L,q,1)$ is a pointed anisotropic quadratic space.

A pseudo-quadratic space $(L, \si, X, h, \pi)$ where $(L, \si)$ is a quadratic pair,
is called {\em standard} if
\[\pi(xu)= u^\si\pi(x) u\]
for all $x \in X$ and all $u \in L$.
In \cite[Proposition 1.18]{W} it is shown that a standard anisotropic pseudo-quadratic space over a quadratic pair is a quadrangular algebra.
\end{definition}

The following definition is used in the $E_6$, $E_7$ and $E_8$ case in \cite{TW}.
\begin{definition}
Let $(L,\si,X,h,\pi)$ be a pseudo-quadratic space in characteristic not $2$.
We fix an arbitrary element $\xi \in X$; then $\xi L$ becomes a subspace of $X$.
We define the {\em orthogonal complement} of $\xi L$ as
\[ (\xi L)^\perp=\{x\in X\mid g(x,\xi v)=0 \ \text{ for all } v\in L\} . \]
\end{definition}

\begin{lemma}
Let $(L,\si,X,h,\pi)$ be a pseudo-quadratic space in characteristic not $2$.
Then $X= \xi L \oplus (\xi L)^\perp$.
Moreover, for every $x\in X$, we have that $x\in (\xi L)^\perp$ if and only if $h(x, \xi)=0$.
\end{lemma}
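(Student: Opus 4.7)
The plan is to establish the two assertions in reverse order: first I will characterize $(\xi L)^\perp$ as the kernel of the map $x \mapsto h(x, \xi)$, and then use this characterization to build the direct sum decomposition.

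For the characterization, I would expand $g(x, \xi v) = \tfrac{1}{2} f(h(x, \xi v), 1)$ using the skew-hermitian identity $h(x, \xi v) = h(x, \xi) v$, and then apply Theorem~\ref{eign:quadr}(ii) to rewrite this as $\tfrac{1}{2} f(h(x, \xi), v^\si)$. Since $\si$ is bijective on $L$, requiring that this vanish for all $v \in L$ is equivalent to $f(h(x, \xi), u) = 0$ for every $u \in L$. Because $\Char(K) \neq 2$ and $q$ is anisotropic, $f(w, w) = 2q(w)$ is nonzero for every nonzero $w \in L$, so $f$ is nondegenerate and the condition reduces to $h(x, \xi) = 0$.

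For the decomposition $X = \xi L \oplus (\xi L)^\perp$, the case $\xi = 0$ is trivial, so I assume $\xi \neq 0$. The crucial ingredient is that $h(\xi, \xi) \neq 0$: comparing the two expressions for $\pi(2\xi)$ coming from $\pi(x+y) \equiv \pi(x) + \pi(y) + h(x,y) \pmod K$ and $\pi(xu) \equiv u^\si \pi(x) u \pmod K$ gives $h(\xi, \xi) \equiv 2\pi(\xi) \pmod K$, so $h(\xi, \xi) = 0$ would place $\pi(\xi)$ inside $K$, contradicting the anisotropy of $\pi$. Given an arbitrary $x \in X$, I would set $v := \bigl(h(x, \xi) h(\xi, \xi)^{-1}\bigr)^\si$, which is well-defined since $L$ is a skew field. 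The identity $h(\xi u, \xi) = u^\si h(\xi, \xi)$, obtained by chaining $h(x, y)^\si = -h(y, x)$ with $h(x, yu) = h(x, y) u$, then yields $h(x - \xi v, \xi) = h(x, \xi) - v^\si h(\xi, \xi) = 0$, so the first part of the proof places $x - \xi v$ in $(\xi L)^\perp$.

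For uniqueness of the decomposition, it suffices to check $\xi L \cap (\xi L)^\perp = 0$: if $\xi v \in (\xi L)^\perp$, then $h(\xi v, \xi) = v^\si h(\xi, \xi) = 0$, which forces $v = 0$ because $h(\xi, \xi) \neq 0$ in the skew field $L$. The only genuinely delicate point is establishing $h(\xi, \xi) \neq 0$ when $\xi \neq 0$; once this is in hand, the remainder is a routine manipulation of the defining identities of a pseudo-quadratic space in characteristic $\neq 2$.
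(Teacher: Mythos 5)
Your proof is correct. For the second assertion it coincides with the paper's argument: expand $g(x,\xi v)=\tfrac{1}{2}f(h(x,\xi v),1)=\tfrac{1}{2}f(h(x,\xi),v^\si)$ and use nondegeneracy of $f$ (which, as you say, follows in characteristic $\neq 2$ from anisotropy of $q$); the paper's displayed formula has $v$ in place of $v^\si$, but since $v$ ranges over all of $L$ this is immaterial, exactly as you observe. Where you genuinely differ is the decomposition $X=\xi L\oplus(\xi L)^\perp$: the paper dispatches it in one line, ``since $g$ is nondegenerate,'' leaning on the analogous statement \cite[(13.51)]{TW}. Nondegeneracy of $g$ on all of $X$ is not by itself enough for a complement of this kind ($g$ is alternating, so a subspace can meet its own perp); one also needs the restriction of $g$ to $\xi L$ to be nondegenerate, which boils down to $h(\xi,\xi)\neq 0$. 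Your argument supplies exactly this ingredient explicitly -- $h(\xi,\xi)\equiv 2\pi(\xi)\pmod K$ plus anisotropy of $\pi$ gives $h(\xi,\xi)\neq 0$ -- and then produces the decomposition constructively via the projection $v=\bigl(h(x,\xi)h(\xi,\xi)^{-1}\bigr)^\si$ together with $\xi L\cap(\xi L)^\perp=0$. The paper's route buys brevity by outsourcing the linear algebra to \cite{TW}; yours buys a self-contained proof that makes visible precisely where anisotropy (implicit in the lemma, since the whole section concerns anisotropic pseudo-quadratic spaces viewed as quadrangular algebras, which is also what licenses your use of Theorem~\ref{eign:quadr}(ii)) enters, and it even gives the explicit $\xi L$-component of any $x$.
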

\begin{proof}
Our proof is essentially as in \cite[(13.51)]{TW}.
The first assertion follows since $g$ is nondegenerate.
To prove the second assertion let $x \in X$, and observe that for each $v \in L$, we have
\[ g(x, \xi v) = \tfrac{1}{2} f(h(x, \xi v), 1) = \tfrac{1}{2} f(h(x, \xi), v) . \]
The claim now follows since both $f$ and $g$ are nondegenerate.
\end{proof}

The quadratic pair $(L,\si)$ is a unital associative algebra with involution, but $(\xi L)^\perp$ is a right $L$-module equipped with a skew-hermitian form.
In the following lemma, inspired by \cite[(16.18)]{TW}, we redefine the involution on~$L$, the scalar multiplication on $(\xi L)^\perp$ and the skew-hermitian form,
in such a way that we get a module that satisfies the requirements of Theorem \ref{th: hermitian form}.

We embed $L$ in $X$ by considering $\xi L$, which we view as a unital associative algebra with involution by defining
\[(\xi v)(\xi w)=\xi vw\quad \text{and}\quad (\xi v)^\si=\xi v^\si,\]
for all $v,w \in L$.
\begin{lemma}\label{lem: skewherm to herm}
Let $(L,\si,X,h,\pi)$ be a pseudo-quadratic space, and take an element $e\in L$ such that $e^\si=-e$. 
Then the map $v \mapsto \overline{v} = ev^\si e^{-1}$ is an involution of $L$, and $\{s\in L\mid\overline{s}=-s\}=Ke$.

For each $v\in L$ and each $x\in (\xi L)^\perp$, we define $(\xi v)\circ x := x\overline{v}$.
Now let
\[ h' \colon (\xi L)^\perp\times (\xi L)^\perp\rightarrow \xi L \colon (x,y) \mapsto h'(x,y):=\xi eh(x,y) . \]
Then $(\xi L)^\perp$ is a left $\xi L$-module w.r.t.\@ $\circ$, and $h'$ is a hermitian form on $(\xi L)^\perp$ w.r.t.\@ the involution $\bar{\ }$,
satisfying the requirements of Theorem~\textup{\ref{th: hermitian form}}.
\end{lemma}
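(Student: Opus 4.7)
The statement factors into four claims, which I will address in turn: (1) the formula $\overline{v} = ev^\sigma e^{-1}$ defines an involution of $L$, (2) its $-1$-eigenspace is exactly $Ke$, (3) the map $(\xi v, x) \mapsto x\overline{v}$ turns $(\xi L)^\perp$ into a left $\xi L$-module, and (4) $h'(x,y) := \xi e h(x,y)$ is a hermitian form with respect to the involution $\overline{\phantom{v}}$ on $\xi L \cong L$. The recurring algebraic inputs are $e^\sigma = -e$, $(e^\sigma)^{-1} = -e^{-1}$, $e^2 \in K$ (since $(e^2)^\sigma = e^2$), and the reformulated skew-hermitian identity $h(xu, y) = u^\sigma h(x,y)$, which follows from $h(x,y)^\sigma = -h(y,x)$ and the right-semilinearity of $h$ in its second argument.

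For (1), additivity is automatic and anti-multiplicativity follows by inserting $e^{-1}e$ into $\overline{vw} = ew^\sigma v^\sigma e^{-1}$. The involutivity computation
\[
\overline{\overline{v}} = e(ev^\sigma e^{-1})^\sigma e^{-1} = e\,(e^\sigma)^{-1}\, v\, e^\sigma e^{-1} = e(-e^{-1})v(-e)e^{-1} = v
\]
is where the two sign changes from $e^\sigma = -e$ conveniently cancel. For (2), I would show $Ke \subseteq \{s : \overline{s}=-s\}$ by direct substitution with $k \in K$, using $k^\sigma = k$ and $e^2 \in K$; for the reverse inclusion, I would decompose an arbitrary skew element for $\overline{\phantom{v}}$ into its $\sigma$-symmetric and $\sigma$-skew parts, and use the skewness condition for $\overline{\phantom{v}}$ to force the $\sigma$-symmetric part to vanish and the $\sigma$-skew part to be a $K$-multiple of $e$.

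For (3), I first verify that $(\xi L)^\perp$ is closed under $\circ$: if $h(x,\xi) = 0$, then $h(x\overline{v}, \xi) = \overline{v}^\sigma h(x,\xi) = 0$. Associativity $(\xi v\cdot\xi w)\circ x = \xi v\circ(\xi w\circ x)$ reduces to $x\overline{vw} = (x\overline{w})\overline{v}$, which combines the anti-homomorphism property of $\overline{\phantom{v}}$ with the right $L$-module axiom in $X$; unitality is immediate. For (4), $K$-bilinearity is inherited from $h$, and the hermitian symmetry $\overline{h'(w_1,w_2)} = h'(w_2,w_1)$ reduces to $\overline{e\alpha} = -e\alpha^\sigma$ for $\alpha \in L$, which one reads off directly from $e^\sigma = -e$ together with $h(w_1,w_2)^\sigma = -h(w_2,w_1)$. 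The compatibility with the left action $h'(\xi v \circ w_1, w_2) = (\xi v)\, h'(w_1,w_2)$ is the crux, and it boils down to the single identity $e\overline{v}^\sigma = ve$, which holds since $\overline{v}^\sigma = (e^{-1})^\sigma\, v\, e^\sigma = e^{-1}ve$ after the two signs from $e^\sigma = -e$ cancel once again.

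The main obstacle is the bookkeeping between the two involutions $\sigma$ and $\overline{\phantom{v}}$ simultaneously acting on $L$, combined with the identification $\xi L \cong L$ and the distinction between left and right actions. The crucial design choice is to define the left $\xi L$-action on $(\xi L)^\perp$ as \emph{right} multiplication by $\overline{v}$; this choice is what makes associativity, closure, and the hermitian compatibility all fall into place through the single identity $e\overline{v}^\sigma = ve$, which is the algebraic heart of the passage from a skew-hermitian form over $(L,\sigma)$ to a hermitian form over $(L,\overline{\phantom{v}})$.
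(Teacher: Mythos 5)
Your proposal is correct, and for the substantive parts it follows essentially the same computational route as the paper: closure of $(\xi L)^\perp$ under $\circ$ via $h(x\overline{v},\xi)=\overline{v}^{\sigma}h(x,\xi)=0$, associativity via $\overline{vw}=\overline{w}\,\overline{v}$ and the right $L$-action on $X$, hermitian symmetry via $\overline{e\alpha}=-e\alpha^{\sigma}$ together with $h(x,y)^{\sigma}=-h(y,x)$, and the module compatibility $h'(\xi v\circ x,y)=(\xi v)h'(x,y)$ via exactly the identity $e\overline{v}^{\sigma}=ve$, which is the same cancellation the paper performs when it writes $\xi e(ev^{\sigma}e^{-1})^{\sigma}h(x,y)=(\xi v)h'(x,y)$. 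The one place you genuinely diverge is the determination of $\{s\in L\mid\overline{s}=-s\}=Ke$. The paper observes that $\overline{s}=-s$ is equivalent to $se=-es^{\sigma}=(se)^{\sigma}$, i.e.\ to $se\in\Fix_L(\sigma)=K$, which gives at once a one-dimensional space containing $e$, hence $Ke$. Your route (split $s$ into $\sigma$-symmetric and $\sigma$-skew parts) also works, but note that the skewness condition alone does not ``force the $\sigma$-skew part to be a $K$-multiple of $e$'': after splitting (legitimate because $e^2\in K$ is central, so conjugation by $e$ preserves both $\sigma$-eigenspaces) you obtain that the symmetric part anticommutes with $e$, hence vanishes, and that the skew part merely \emph{commutes} with $e$; to conclude that it lies in $Ke$ you still need the structure of the quadratic pair, namely that the $\sigma$-skew space is $Ke$ in the quadratic extension case, or that the centralizer of $e$ in the quaternion algebra is $K(e)$, whose skew part is $Ke$. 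That extra (standard but unstated) input is avoided by the paper's one-line equivalence, which you may prefer to adopt; everything else in your write-up matches the paper's argument.
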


\begin{proof}
It is clear that the map $v \mapsto \overline{v}$ is an involution.
In order to determine the subspace of skew-symmetric elements $\mathcal{S} = \{s\in L\mid\overline{s}=-s\}$,
we first observe that $\overline{e} = ee^\si e^{-1} = -e$, hence $e \in \mathcal{S}$.
On the other hand, for each $s \in L$, we have
\begin{align*}
\overline{s} = -s &\iff se = - es^\si \\
&\iff se = (se)^\sigma \\
&\iff se \in \Fix_L(\sigma) = K .
\end{align*}
It follows that $\dim_K \mathcal{S} = 1$; since $e \in \mathcal{S}$, we conclude that $\mathcal{S} = Ke$.

Let $v,w \in L$ and $x\in (\xi L)^\perp$, since $h(x \overline{v},\xi)=v h(x,\xi) =0$ we have that $(\xi v)\circ x\in (\xi L)^\perp$. To verify that $(\xi L)^\perp$ is a left $\xi L$-module, it is enough to check the following
\[ \xi v\circ (\xi w\circ x)=(x\overline{w})\overline{v}=x( \overline{w}\ \overline{v})=x(\overline{vw})=(\xi vw)\circ x.\]
Finally, it is easily checked that $h'$ is a hermitian form:
\begin{align*}
&h'(\xi v\circ x, y)=\xi eh(x\overline{v},y)=\xi e(ev^\si e^{-1})^\si h(x,y)=(\xi v) h'(x,y),\\
&\overline{h'(x,y)}=-\overline{h(x,y)}e=-eh(x,y)^\si=h'(y,x),
\end{align*}
for all $x,y \in (\xi L)^\perp$ and all $v \in L$.
\end{proof}

\begin{theorem}\label{th: herm struct alg}
Let $(L,\si,X,h,\pi)$ be a pseudo-quadratic space, and choose an element $e\in L$ such that $e^\si=-e$. 
Since $X =(\xi L)\oplus (\xi L)^\perp$, each element in $X$ can be written in a unique way as $\xi v+x$ for $v\in L$ and $x\in (\xi L)^\perp$.

Then $X$ is a structurable algebra, with involution and multiplication given by
\begin{align}\label{mult herm}
	\overline{\xi v+x} &= \xi(ev^\si e^{-1})+x, \notag\\
	(\xi v+x) \cdot (\xi u+y) &= \xi \bigl( vu+eh(y,x) \bigr) + \bigl( x(eu^\si e^{-1}) + yv \bigr),
\end{align}
for all $u,v\in L$ and all $x,y\in (\xi L)^\perp$.
This structurable algebra has skew-dimension one.
 
When we fix $e = h(\xi ,\xi)$, then this algebra is isotopic to the family of structurable algebras obtained in Theorem~\textup{\ref{th: struct expliciet}}
starting from the pseudo-quadratic space $(L,\si,X,h,\pi)$.

In the corresponding \FTS\  we have $yyy=6y\pi(y)$ for all $y\in X$. 
\end{theorem}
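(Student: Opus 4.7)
The plan is to prove the three assertions in order: existence of the structurable algebra structure together with skew-dimension one, the cubic identity $yyy=6y\pi(y)$ in the associated \FTS, and finally the isotopy with the family of Theorem~\ref{th: struct expliciet}.

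First I would apply Lemma~\ref{lem: skewherm to herm} with the chosen $e$: this converts the skew-hermitian form into a hermitian form $h'=\xi e\,h(\cdot,\cdot)$ on $(\xi L)^\perp$, taken with respect to the new involution $v\mapsto \bar v:=ev^\si e^{-1}$ on $L$ (and hence on $\xi L$). Feeding this datum into Theorem~\ref{th: hermitian form} yields a structurable algebra structure on $\xi L\oplus (\xi L)^\perp=X$; translating the general formulas back to this decomposition and using $e^\si=-e$ to check $\overline{\bar v}=v$ produces exactly~\eqref{mult herm}. Since the involution fixes $(\xi L)^\perp$ pointwise and the skew-symmetric elements of $(L,\bar{\ })$ form the line $Ke$ by Lemma~\ref{lem: skewherm to herm}, the skew-symmetric subspace of $X$ is $K\xi e$, giving skew-dimension one.

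For the identity $yyy=6y\pi(y)$ with $e=h(\xi,\xi)$, Theorem~\ref{th:FTS skew dim} reduces the task to verifying $U_y(s_0 y)=V_{y,s_0 y}\,y = 3y\pi(y)$, with $s_0$ an appropriate scalar multiple of $\xi e$ chosen to normalize the \FTS. Writing $y=\xi v+y_0$ with $y_0\in (\xi L)^\perp$, the products $s_0 y$, $\bar y$, $y\bar y$, and $\overline{s_0 y}$ all expand via~\eqref{mult herm} into expressions involving $v$, $y_0$, and the maps $h$ and $\pi$. Assembling $\{y,s_0 y,y\}=2(y\overline{s_0 y})y-(y\bar y)(s_0 y)$ and simplifying using the identities of Theorem~\ref{eign:quadr}, the defining relation $\pi(a)=\tfrac{1}{2}h(a,a)$, and the crucial equation $e=h(\xi,\xi)=2\pi(\xi)$, collapses the expression to $3y\pi(y)$, where juxtaposition now refers to the right $L$-action from the quadrangular algebra.

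Finally, for the isotopy claim, by Lemma~\ref{lem: similar and isotopic} it suffices to exhibit a similarity between the \FTS\ just considered and the \FTS\ of Theorem~\ref{th: FTS} (the latter being isometric to the \FTS\ of each algebra in the Theorem~\ref{th: struct expliciet} family by construction). The cubic identity from the previous step shows that the two trilinear products on $X$ agree after polarization. A direct calculation of the bilinear form via $\langle x,y\rangle'\,\mathbf{1}=(x\bar y - y\bar x)s_0$ combined with a suitable $K$-linear automorphism of $X$ (using the right $L$-action, which is compatible with the \FTS\ by Theorem~\ref{th: module FTS} and Corollary~\ref{co: V-mod}) aligns the bilinear forms up to a scalar, producing the required similarity and hence the isotopy of structurable algebras.

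The main obstacle is the cubic identity in the second step. The iterated products $(y\bar y)(s_0 y)$ and $(y\overline{s_0 y})y$ expand into many terms, and only collapse to $3y\pi(y)$ after careful bookkeeping and repeated invocation of the pseudo-quadratic identities; the equation $e=h(\xi,\xi)$ plays a decisive role, matching the $eh(y_0,y_0)$ contribution in $y\bar y$ with the $\pi$-contribution coming from the $\xi L$-component of $y$.
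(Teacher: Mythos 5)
Your first step (converting the skew-hermitian form via Lemma~\ref{lem: skewherm to herm}, feeding it into Theorem~\ref{th: hermitian form}, and reading off skew-dimension one from $\mathcal S = K\xi e$) is exactly the paper's argument and is fine. The gap is in your second step, and it propagates into the third. You claim that, for $s_0$ a suitable scalar multiple of $\xi e$, the expansion of $V_{y,\,s_0y}\,y$ collapses to $3y\pi(y)$, so that the triple product of the hermitian structurable algebra coincides (after polarization) with the quadrangular-algebra triple product of Theorem~\ref{th: FTS}. This is false. Expanding with the multiplication \eqref{mult herm} and $s_0=\xi e$, $y=\xi v + x$, one finds
\[
2V_{y,\,s_0y}\,y \;=\; 12\Bigl(\xi\bigl(-\pi(\xi v^{\si}+xe)\,v\bigr)+x\bigl(e\,\pi(\xi v^{\si}+xe)\,e^{-1}\bigr)\Bigr),
\]
which involves $\pi(\xi v^{\si}+xe)$, not $\pi(\xi v+x)$, and the arguments are twisted by $\si$ and by $e$. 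Concretely: for $y=x\in(\xi L)^{\perp}$ one gets $V_{y,\,s_0y}\,y=-6e^2\,x\pi(x)$, i.e.\ $2N(e)$ times $3y\pi(y)$, whereas for $y=\xi$ (take $v=1$) one gets $-3\xi e=-2\cdot 3\,\xi\pi(\xi)$, and for $y=\xi v$ with $v^{\si}=-v$ anticommuting with $e$ the factor is $+2$. Since the proportionality factor is not even constant in $y$, no rescaling of $s_0$ (which only multiplies the whole triple product by a scalar, cf.\ Remark~\ref{rem:norm}(i)) can repair this, and the identity map is not a similarity of \FTS s.

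The missing idea is the twisting map used in the paper: $\psi\colon \xi v+x\mapsto \xi v^{\si}+xe^{-1}$. Substituting $\psi(y)$ into the displayed formula turns $\pi(\xi v^{\si}+xe)$ into $\pi(y)$ and yields $2V_{\psi(y),\,s_0\psi(y)}\,\psi(y)=2\psi\bigl(6y\pi(y)\bigr)$, so $\psi$ (not the identity) is the similarity between the quadrangular-algebra \FTS\ and the \FTS\ of the hermitian algebra, and Lemma~\ref{lem: similar and isotopic} then gives the isotopy; the last sentence of the theorem is to be read through this identification. Note also that your third step is partly redundant: by the criterion quoted in Definition~\ref{def: similarity} (Ferrar's Lemma~6.6), matching the trilinear products already certifies a similarity, so there is no separate alignment of bilinear forms to perform --- but that matching is precisely where the twist $\psi$ is indispensable, and your proposal asserts it holds without the twist. (You may have been misled by Theorem~\ref{th:Q-main}(iii): the identity $U_x(s_0x)=3x\pi(x)$ is a property of the algebra built in Theorem~\ref{th: struct expliciet}, not of the hermitian-form algebra constructed here.)
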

\begin{proof}
We consider $(\xi L)^\perp$ as a left hermitian space over $\xi L$ with involution and hermitian form as in Lemma~\ref{lem: skewherm to herm}.
Writing down the involution and multiplication in Theorem~\ref{th: hermitian form} gives the above formulas.

Since $(\xi L)^\perp$ is invariant under the involution, it follows from Lemma \ref{lem: skewherm to herm} that this structurable algebra has skew-dimension one.
We will denote this structurable algebra, obtained from the hermitian form, by $X$.

On the other hand, let $\widetilde X$ be the structurable algebra obtained from the quadrangular algebra $(L, \sigma, X, h, \pi)$, as in Theorem~\ref{th: struct expliciet}.
We will show that $X$ and $\widetilde X$ are isotopic by showing that their associated \FTS s are similar.

Now take $e=h(\xi,\xi)\in L$; we have $\overline{e}=-e$.
We will determine the trilinear map of the corresponding \FTS\ of $X$ (see Theorem~\ref{th:FTS skew dim}) for $s_0=e$.
Let $y=\xi v+x\in (\xi L)\oplus (\xi L)^\perp$ be arbitrary. Then
\begin{align}
2V_{y,\,s_0y}\,y &= 2(2(y\cdot \overline{s_0\cdot y})y-(y\cdot \overline{y})\cdot (s_0\cdot y))\label{lijn1}\\
&=6 \bigl( \xi(-vev^\si+eh(x,x)e)v+x(e(vev^\si e^{-1}-eh(x,x))\bigr)\label{lijn2}\\
&= 12\bigl(\xi( -\pi(\xi v^\si+xe)v)+x(e\pi(\xi v^\si+xe)e^{-1}) \bigr);\label{lijn3}
\end{align}
equation \eqref{lijn1} follows from the definition of the $V$-operator of a structurable algebra; \eqref{lijn2} follows after a straightforward calculation using the multiplication on $X$ defined by \eqref{mult herm}; and \eqref{lijn3} follows from
\begin{multline*}
vev^\si-eh(x,x)e = vh(\xi,\xi)v^\si-eh(x,x)e=h(\xi v^\si, \xi v^\si)+h(xe,xe)\\
=h(\xi v^\si+xe,\xi v^\si+xe)=2\pi(\xi v^\si+xe),
\end{multline*}
 since $xe\in (\xi L)^\perp.$

It follows from Theorem~\ref{th: struct expliciet} that in $\widetilde X$ the trilinear map of the corresponding \FTS\  is $2V_{\widetilde{y},\,s_0\widetilde{y}}\widetilde{y}=3 \widetilde{y}h(\widetilde{y},\widetilde{y})=6\widetilde{y}\pi(\widetilde{y})$ for all $\widetilde{y} \in \widetilde{X}$.

Let $\psi$ be the vector space isomorphism
\[ \psi \colon \widetilde X \rightarrow X \colon \xi v+x\mapsto \xi v^\si+xe^{-1} ; \]
then by applying~\eqref{lijn3} with $y$ replaced by $\psi(y) = \xi v^\si+xe^{-1}$, we get
\begin{align*}
2V_{\psi(y),\,s_0\psi(y)}\,\psi(y) &= 12 \bigl( -\xi (\pi(y)v^\si)+ x\pi(y)e^{-1} \bigr) \\ 
&=12\psi\bigl(\xi(v\pi(y))+x\pi(y)\bigr) \\
&= 12 \psi\bigl(y\pi(y)\bigr) \\
&= 2 \psi\bigl( 6 y\pi(y)\bigr) .
\end{align*}
The expression $V_{\psi(y),\,s_0\psi(y)}\,\psi(y)$ is exactly the triple product $\psi(y)\psi(y)\psi(y)$ in the \FTS\ of the structurable algebra $X$,
whereas the expression $6y\pi(y)$ is the triple product $yyy$ in the \FTS\  of the structurable algebra $\widetilde X$.

We conclude that $\psi$ is a similarity of \FTS s, and therefore the structurable algebras $X$ and $\widetilde X$ are isotopic (see Lemma~\ref{lem: similar and isotopic}).
\end{proof}

%%%%%%%%%%%%%%%%%%%%%%%%%%%%%%%%%%%%%%%%%%%%%%%%%%%%%%%%%%%%%%%%%%%%%%%%%%%%%%%%%%%%%%%%%%%%%%%%%%%%%%%%%%%%%%%%%%
%                                                                                                                %
%  SECTION : STRUCTURABLE ALGEBRAS ON A QUADRANGULAR ALGEBRA OF TYPE E_6, E_7 AND E_8                            %
%                                                                                                                %
%%%%%%%%%%%%%%%%%%%%%%%%%%%%%%%%%%%%%%%%%%%%%%%%%%%%%%%%%%%%%%%%%%%%%%%%%%%%%%%%%%%%%%%%%%%%%%%%%%%%%%%%%%%%%%%%%%
\section{Structurable algebras on quadrangular algebras of type $E_6$, $E_7$ and $E_8$}\label{se:exQA}\label{par: E6E7E8}

In this section we consider quadrangular algebras of type $E_6, E_7$ and $E_8$.
We investigate the structurable algebra obtained in Theorem \ref{th: struct expliciet} for a ``nice" choice of $a$ and $e$.

In the $E_8$-case, we show that this structurable algebra is a twisted version of the Jordan algebra of a biquaternion algebra over the base field $K$.
This algebra can also be obtained by considering a generalized Cayley--Dickson process, see \cite{CD},
starting from the Jordan algebra of a biquaternion algebra over $K$.
A similar description holds for $E_6$ and $E_7$.
More precisely, we will show the following result.
\begin{theorem}\label{th:E-main}
	Let $\Omega$ be a quadrangular algebra of type $E_6$, $E_7$ or $E_8$ over~$K$, with $\Char(K) \neq 2$,
	and let $X$ be the structurable algebra obtained
	from $\Omega$ as in Theorem~\textup{\ref{th: struct expliciet}} (which is uniquely defined up to isotopy).
	Then $X$ is isotopic to $\CD\bigl(A^+, \Nrd, c \bigr)$ for some division algebra $A$ and some $c \in K$, where
	\begin{compactenum}[\rm (i)]
	    \item
		$A$ is a quaternion algebra $Q$ if $\Omega$ is of type $E_6$;
	    \item
		$A$ is a tensor product $Q \otimes L$ with $Q$ a quaternion algebra and $L$ a quadratic extension,
		if $\Omega$ is of type $E_7$;
	    \item
		$A$ is a biquaternion algebra $Q_1 \otimes Q_2$ if $\Omega$ is of type $E_8$.
	\end{compactenum}
\end{theorem}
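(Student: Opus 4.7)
The plan is to apply Construction \ref{con:main} to a quadrangular algebra $\Omega$ of type $E_k$ with a carefully chosen base element $a \in X^*$. By Lemma \ref{le:E1}, one can arrange $a$ so that the parameter $\lambda$ equals $1$, which simplifies the formulas of Theorem \ref{th: struct expliciet} considerably. The resulting structurable algebra $X$ has skew-dimension one, is a division algebra by anisotropy of $q\circ\pi$, and has $K$-dimension $8$, $16$, $32$ in the $E_6$, $E_7$, $E_8$ cases respectively. Invoking the classification in Remark \ref{rem:cssa}, one can exclude all cases except forms of structurable matrix algebras (case (4)): Jordan algebras (2) have skew-dimension zero; associative algebras with involution (1) and hermitian form constructions (3) lead in the division setting to pseudo-quadratic quadrangular algebras rather than exceptional ones; tensor products of composition algebras (5) are not division algebras of the required dimensions; and the exceptional $35$-dimensional case (6) cannot occur for dimension reasons either.

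Having located $X$ in case (4), the next step is to identify the Jordan algebra $M_1$ appearing in $M(M_1, 1) \cong X \otimes_K \Delta$ from Step 2 of Construction \ref{con:main}. Its dimension over $\Delta$ is $\tfrac12 \dim_K X$, namely $4$, $8$ and $16$ for the three types. Using the explicit trilinear product on $X \otimes_K \Delta$ coming from Theorem \ref{th: FTS}, together with $\pi$ and $h$, one endows $M_1$ with its induced Jordan structure (via either a cubic or a quadratic form as in Example \ref{ex: skew dim}), and then shows that $M_1$ is isomorphic to $A^+ \otimes_K \Delta$ for $A$ the algebra in the statement: a quaternion algebra $Q$ for $E_6$, a tensor product $Q \otimes L$ for $E_7$, and a biquaternion algebra $Q_1 \otimes Q_2$ for $E_8$. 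The algebra $A$ is recovered from the classical invariants of the quadratic form $q$ of $\Omega$: the Clifford invariant for $E_6$, the Clifford invariant and discriminant for $E_7$, and the Arason invariant for $E_8$, with Remark \ref{rem:arason} ensuring that the biquaternion algebra is well-defined up to similarity.

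Finally, the Galois descent via $\widetilde\pi$ applied to $M(A^+ \otimes_K \Delta, 1)$ yields a $K$-form of this structurable matrix algebra, and one shows this form is isotopic to $\CD(A^+, \Nrd, c)$ for a scalar $c \in K^*$ determined by the twisting in the descent, naturally related to $-q(\pi(a))$, the discriminant of $\Delta/K$. Concretely, both $X$ and $\CD(A^+, \Nrd, c)$ are structurable algebras of skew-dimension one whose symmetric subspaces recover $A^+$, and their multiplications can be matched after the identification provided by $\tau$; the twisting parameter $c$ can be tracked through the explicit formulas for $\widetilde\pi$ given in Construction \ref{con:main}. The main obstacle throughout will be the explicit identification of $M_1$ with $A^+ \otimes_K \Delta$ in each exceptional type, which requires matching the induced Jordan structure on $M_1$ (trace form, cubic norm or sharp product) with the corresponding invariants of $A^+$, and for $E_8$ relies crucially on Hoffmann's result on the Arason invariant recalled in Remark \ref{rem:arason}.
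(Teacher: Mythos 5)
There is a genuine gap, and in fact a concrete error, at the heart of your outline. You propose to identify the Jordan algebra $M_1$ of Step~2 with $A^+\otimes_K\Delta$, but this is dimensionally impossible: since $\XtensorE=\Delta u_1\oplus\Delta u_2\oplus M_1\oplus M_{-1}$ with $M_1\cong M_{-1}$, we have $\dim_\Delta M_1=\tfrac12(\dim_K X-2)=3,7,15$ in the three cases, not $4,8,16$ as you claim, whereas $\dim_\Delta(A^+\otimes_K\Delta)=4,8,16$. In the Cayley--Dickson picture $A^+$ is the algebra being doubled, $\CD(A^+,\Nrd,c)=A^+\oplus s_0A^+$, and after extending scalars it becomes a matrix algebra $M(J,\eta)$ with $\dim J=\dim A-1$; so the object you would need to identify is not $M_1$ in the way you describe. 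Moreover, your proposed mechanism for producing $A$ --- reading it off from the Clifford/Arason invariants of $q$, ``with Remark~\ref{rem:arason} ensuring that the biquaternion algebra is well-defined up to similarity'' --- misreads that remark, which states the opposite: neither $E$ nor the biquaternion algebra is an invariant of $q$ or of the structurable algebra; only the isotopy class of the structurable algebra is controlled by $e_3(q)$. The missing key idea is the concrete realization of $A$ from a chosen norm splitting of $q$: $X|_K\cong C(q_K,1)/(v_2v_3v_4v_5v_6-1)C(q_K,1)$ is a biquaternion algebra with $Q_1=(-s_2,-s_3)_K$, $Q_2=(-s_{46},-s_{56})_K$ (Lemma~\ref{lem: biquaternion}). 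The proof then requires the explicit choice $a=-v_1/\gamma$ and $e$ with $\la=1$ (Lemma~\ref{le:E1}, which you do invoke), the computation of the $\star$-multiplication from $x\star y=V_{x,\,s_0\star\frac1\mu s_0}\,y$ together with the coordinate formulas for $h$ from \cite{TW}, showing $X$ is a twisted form of $(Q_1\otimes Q_2)^+$ (Theorem~\ref{th: biquat}), and finally an explicit isomorphism $\chi$ with $\CD\bigl((Q_1\otimes Q_2)^+,\Nrd,\ga^2\bigr)$ (Theorem~\ref{th: CD}). Your phrases ``one shows that $M_1$ is isomorphic to \dots'', ``multiplications can be matched'', and ``the twisting parameter $c$ can be tracked'' are precisely the content that must be proved, and no workable substitute is offered.

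The detour through the classification of central simple structurable algebras (Remark~\ref{rem:cssa}) is also both unnecessary and unsound: it would impose $\Char(K)\neq5$, which the theorem does not assume; the exclusion of the hermitian-form class (3) by saying it ``leads to pseudo-quadratic quadrangular algebras'' is unjustified and in fact fails for types $E_6$ and $E_7$, where the paper's final subsection shows $X$ \emph{is} isotopic to a structurable algebra built from a (pseudo-)hermitian form; and forms of tensor products of composition algebras are ruled out by skew-dimension, not by dimension. Most importantly, even a correct placement of $X$ among forms of structurable matrix algebras would not identify it as $\CD(A^+,\Nrd,c)$ for the specific $A$ in the statement, so this step buys nothing towards the conclusion.
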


We will give more precise statements below; in particular, we will explicitly construct the algebra $A$ and the constant $c$ in each case.

\subsection{Coordinatization}

For an explicit description of the quadrangular algebra of type $E_6$, $E_7$ and $E_8$, we refer to \cite[Chapter 12 and 13]{TW};
for a concise description we refer to the first part of \cite[Chapter 10]{W}. 
Some care is needed, since the map $g$ in \cite{TW} is equal to $-g$ in \cite{W}.
Here we only give a concise overview of the structure of a quadrangular algebra of type $E_6$, $E_7$ or $E_8$.

\begin{definition}\label{def: typeE8}
A quadratic space $(K,L,q)$ with base point is of type $E_6$, $E_7$ or $E_8$ if it is anisotropic and there exists a separable quadratic field extension $E/K$,
with norm denoted by $N$, such that:
\begin{compactitem}
\item[$E_6:$ ] there exist $s_2,s_3\in K^*$ such that \[(K,L,q)\cong (K,E^3,N\perp s_2 N\perp s_3 N);\]
\item[$E_7:$ ] there exist $s_2,s_3,s_4\in K^*$ such that $s_2s_3s_4\notin N(E)$ and \[(K,L,q)\cong (K,E^4,N\perp s_2 N\perp s_3 N\perp s_4 N);\]
\item[$E_8:$ ] there exist $s_2,s_3,s_4,s_5,s_6\in K^*$ such that $-s_2s_3s_4s_5s_6\in N(E)$ and \[(K,L,q)\cong (K,E^6,N\perp s_2 N\perp s_3 N\perp s_4 N\perp s_5 N\perp s_6 N).\]
We always assume that $s_2s_3s_4s_5s_6=-1$, which can be achieved by rescaling the quadratic form if necessary.
We use the convention that $s_{ij}=s_is_j$ and $s_{ijk}=s_is_js_k$ for all $i,j,k \in \{2,\dots,6\}$.
\end{compactitem}
\end{definition}
As we are working in characteristic not $2$, we will always assume that $E=K(\gamma)$ with $\gamma^2\in K$.
 
It is shown in \cite[(12.37)]{TW} that if
\[ (K,E^6,N\perp s_2 N\perp s_3 N\perp s_4 N\perp s_5 N\perp s_6 N) \]
is a quadratic space of type $E_8$, then $(K,E^4,N\perp s_2 N\perp s_3 N\perp s_4 N)$ is a quadratic space of type $E_7$ and $(K,E^3,N\perp s_2 N\perp s_3 N)$ is a quadratic space of type $E_6$.

If $(K,L,q)$ is a quadratic space with base point of type $E_8$, $L$ has dimension 12 over $K$ and there exists a scalar multiplication $E\times L\rightarrow L$ that extends the scalar multiplication $K\times L\rightarrow L$.
We denote a basis of $L$ by $(v_1, \ga v_1,\dots, v_6, \ga v_6)$; with this notation $v_1$ is the base point of $q$.

Let $(K,L,q,1,X,\cdot,h,\theta)$ be a quadrangular algebra of type $E_6$, $E_7$ or $E_8$, then $(K,L,q)$ is a quadratic space of type $E_6$, $E_7$ or $E_8$ with basepoint denoted by $1$.
 
The vector space $X$ has $K$-dimension equal to $8$, $16$ or $32$, respectively; it is a $C(q,1)$-module.
Some of the properties of the maps $\cdot, h, \theta$ and $\pi$ are given in Definition~\ref{def:quad}.
The existence of the vector space $X$ and of the maps $\cdot$, $h$ and $\theta$ is shown in \cite[Chapter 13]{TW}
by giving an explicit ad-hoc construction using the coordinatization of $L$.

Let $\I=\{2,3,4,5,6,23,24,25,26,34,35,36,45,46,56\}$; in the $E_8$-case, an arbitrary element $x \in X$ can be written as
$x = t_1v_1+\sum_{i\in \I}t_iv_i$ with coefficients $t_i\in E$.
The quadrangular algebra in the $E_7$-case is a subspace of the $E_8$-case by taking all the elements in $X$ for which
$t_5=t_6=t_{25}=t_{26}=t_{35}=t_{36}=t_{45}=t_{46}=0$.
The $E_6$-case is the subspace of the $E_7$-case consisting of the elements for which $t_4=t_{24}=t_{34}=t_{56}=0$.

Since the $E_8$-case is the largest and least understood, we focus primarily on that case.
Most of our results in the $E_6$- and $E_7$-case can then be deduced by making the appropriate coefficients zero.

We start by investigating the set
\[ X|_K:=\Bigl\{t_1v_1+\sum_{i\in \I}t_iv_i\mid t_1,\dots, t_{56}\in K\Bigr\} \leqslant X. \]
This is a $16$-dimensional vector space over $K$. 
Let
\[ L|_K=\{t_1v_1+t_2v_2+t_3v_3+t_4v_4+t_5v_5+t_6v_6\mid t_1,\dots, t_{6}\in K\Bigr\}\leqslant L, \]
and denote the restriction of the quadratic form $q$ to $L|_K$ by $q_K \colon L|_K\rightarrow K$.
By construction, see \cite[(13.5) and (13.8)]{TW}, $X|_K$ is isomorphic as a vector space to $C(q_K,1)/ M_K$,
where $M_K$ is the submodule $(v_2v_3v_4v_5v_6-1)C(q_K,1)$ of $C(q_K,1)$. 

Since $v_iv_j=-v_jv_i\in C(q_K,1)$ for $i\neq j \in \{2,\dots,5\}$, the element $v_2v_3v_4v_5v_6$ is in the center of $C(q_K,1)$;
therefore $M_K$ is a two-sided ideal of $C(q_K,1)$. 

\begin{lemma}\label{lem: biquaternion}
Consider $X|_K$ as an associative algebra endowed with the multiplication induced by the Clifford algebra with base point.
Then $X|_K$ is isomorphic, as an algebra, to a biquaternion algebra.
\end{lemma}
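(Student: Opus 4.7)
The plan is to give an explicit presentation of the associative algebra $X|_K = C(q_K,1)/M_K$ by generators and relations, and then exhibit it as a tensor product of two centralising quaternion subalgebras.

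First I would unwind the Clifford-with-basepoint relations for the orthogonal basis $v_1,\dots,v_6$. Since $v_1 = 1 \in L$ is the base point, it is the multiplicative identity of $C(q_K,1)$; for $i \geq 2$ the orthogonality $f(1,v_i) = 0$ gives $v_i^\sigma = -v_i$, and the defining relations $u \otimes u^\sigma = q(u)\cdot 1$ and their linearisation translate into $v_i^2 = -s_i$ and $v_iv_j = -v_jv_i$ for all distinct $i,j \in \{2,\dots,6\}$. Thus $C(q_K,1)$ is generated by $v_2,\dots,v_6$ subject to these relations and has dimension $2^5 = 32$. A direct computation using the commutation relations together with the hypothesis $s_2s_3s_4s_5s_6 = -1$ shows that the element $\omega := v_2v_3v_4v_5v_6$ is central in $C(q_K,1)$ and satisfies $\omega^2 = 1$; consequently $M_K = (\omega - 1)C(q_K,1)$ is a two-sided ideal, and the quotient $X|_K$ has dimension $16$.

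In this quotient the relation $\omega = 1$ lets us solve for $v_6$ as a scalar multiple of $v_2v_3v_4v_5$, so $X|_K$ is already generated by $v_2, v_3, v_4, v_5$ with only the Clifford relations $v_i^2 = -s_i$ and $v_iv_j = -v_jv_i$; in other words $X|_K$ is isomorphic to the Clifford algebra of the $4$-dimensional form $\langle -s_2, -s_3, -s_4, -s_5\rangle$ over $K$. To make the biquaternion decomposition visible I would set $Q_1 := K\langle v_2, v_3\rangle \cong (-s_2, -s_3)_K$ and $Q_2 := K\langle w_4, w_5\rangle$ with $w_4 := v_2v_3v_4$ and $w_5 := v_2v_3v_5$. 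Straightforward sign-bookkeeping shows that $w_4$ and $w_5$ each commute with both $v_2$ and $v_3$, and that $w_4^2 = s_2s_3s_4$, $w_5^2 = s_2s_3s_5$, $w_4w_5 = -w_5w_4$; so $Q_2 \cong (s_2s_3s_4, s_2s_3s_5)_K$ centralises $Q_1$. The multiplication map $Q_1 \otimes_K Q_2 \to X|_K$ is therefore a well-defined algebra morphism with central simple domain, hence injective, and since both sides are $16$-dimensional over $K$ it is an isomorphism, realising $X|_K$ as the biquaternion algebra $(-s_2,-s_3)_K \otimes_K (s_2s_3s_4, s_2s_3s_5)_K$.

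The only real obstacle is the sign bookkeeping: verifying $\omega^2 = 1$ (which is where the $E_8$ normalisation $s_2\cdots s_6 = -1$ enters) and confirming the squares and anticommutation of $w_4, w_5$ require careful manipulation of the Clifford relations, but no deeper input is involved beyond the standard fact that a morphism between central simple $K$-algebras of the same dimension is an isomorphism.
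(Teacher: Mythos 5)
Your argument is correct, and it proves the lemma by a route that is related to, but genuinely different from, the paper's. The paper stays inside $X|_K$ with its given coordinates: it writes down two explicit quaternion subalgebras, $\langle 1, v_2, v_3, v_{23}\rangle \cong (-s_2,-s_3)_K$ and $\langle 1, v_{46}, v_{56}, v_{45}\rangle \cong (-s_{46},-s_{56})_K$, checks that these subspaces commute elementwise and together generate $X|_K$, and records the resulting isomorphism $\psi$ in a table; that specific $\psi$, whose second factor is built from the $v_6$-type elements $v_{46}, v_{56}, v_{45}$, is exactly what is re-used later (in Theorem~\ref{th: biquat} and in the comparison with the Cayley--Dickson construction). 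You instead first re-present the algebra: the central element $\omega = v_2v_3v_4v_5v_6$ satisfies $\omega^2 = -s_2s_3s_4s_5s_6 = 1$ precisely because of the $E_8$ normalisation, so modulo $M_K$ the generator $v_6$ becomes a scalar multiple of $v_2v_3v_4v_5$ and $X|_K$ is the full Clifford algebra of the rank-$4$ form $\langle -s_2,-s_3,-s_4,-s_5\rangle$; then you apply the standard quaternion decomposition of such a Clifford algebra, with second factor generated by $v_2v_3v_4$ and $v_2v_3v_5$, obtaining $(-s_2,-s_3)_K \otimes_K (s_{234},s_{235})_K$. Your sign computations are right, and since $s_{234} \equiv -s_{56}$ and $s_{235} \equiv -s_{46}$ modulo squares (again by $s_2s_3s_4s_5s_6=-1$), this is the same biquaternion algebra as the paper's, as it must be. What your approach buys is a conceptual explanation of where the normalisation enters ($\omega^2=1$) and a reduction to a standard Clifford-algebra fact; what it does not deliver is the particular coordinate table for $\psi$ that the paper needs afterwards. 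Two small glosses, neither a real gap: the claim that the quotient is $16$-dimensional needs the (easy) observation that left multiplication by $\omega$ pairs each basis monomial $v_S$ with $v_{S^c}$, so both eigenspaces have dimension $16$ --- or you may simply cite the $16$-dimensionality of $X|_K$ stated just before the lemma; and the identification of the quotient with the Clifford algebra of $\langle -s_2,-s_3,-s_4,-s_5\rangle$ should be phrased via the universal property (a surjection from that Clifford algebra onto $X|_K$, bijective by dimension count), which is what your wording implicitly does. The final step, injectivity of $Q_1\otimes_K Q_2 \to X|_K$ from central simplicity plus equality of dimensions, is the same mechanism the paper's generation-and-commutation argument relies on.
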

\begin{proof}
The multiplication on $X|_K=C(q_K,1)/M_K$ is induced by the multiplication in the Clifford algebra $C(q_K,1)$
by reducing the result modulo $M_K$; see \cite[(13.8)]{TW}.

We define two quaternion algebras over $K$ with the following generators:
\begin{alignat*}{2}
Q_1&:=(-s_2,-s_3)_K &&=\langle \ell,m \mid \ell^2 = -s_2, m^2 = -s_3, \ell m = - m \ell \rangle, \\
Q_2&:=(-s_{46},-s_{56})_K &&=\langle n,r \mid n^2 = -s_{46}, r^2 = -s_{56}, nr = -nr \rangle.
\end{alignat*} 
In order to construct an isomorphism $\psi \colon Q_1\otimes_K Q_2\rightarrow X|_K$, we have to describe
two isomorphisms $\psi_i \colon Q_i \to X|_K$ for $i \in \{1,2\}$, such that the images $\psi_1(Q_1)$ and $\psi_2(Q_2)$ commute elementwise,
and together generate~$X|_K$.
We can achieve this by the choice
\begin{alignat*}{4}
\psi_1(\ell) &= v_2, &\quad \psi_1(m) &= v_3, &\quad \psi_1(\ell m) &= v_{23}, \\
\psi_2(n) &= v_{46}, & \psi_2(r) &= v_{56}, & \psi_2(nr) &= s_6 v_{45}.
\end{alignat*}
Observe that the subspaces $\langle 1, v_2, v_3, v_{23} \rangle$ and $\langle 1, v_{46}, v_{56}, v_{45} \rangle$ do indeed commute elementwise,
and together they generate $X|_K$.
\end{proof}
We can summarize the isomorphism $\psi$ in the following table:
\[
\begin{array}{c|cccc}
\otimes&1&n&r&nr\\
\hline
1&v_1&v_{46}&v_{56}&s_6v_{45}\\
\ell&v_2&-s_{246}v_{35}&s_{256}v_{34}&s_{2456}v_{36}\\
m&v_3&s_{346}v_{25}&-s_{356}v_{24}&-s_{3456}v_{26}\\
\ell m&v_{23}&-s_{2346}v_5&s_{2356}v_4&-v_6
\end{array}\]

\vspace*{0ex}

\begin{remark} \begin{compactenum}[(i)]
\item The construction of the biquaternion algebra depends on the similarity class of $q$ and on the norm splitting for $q$,
and in fact, this biquaternion algebra is {\em not} an invariant of the quadrangular algebra. 
\item The Albert form corresponding to the biquaternion algebra is similar to the $6$-dimensional
quadratic form $\langle 1, s_2, s_3, s_4, s_5, s_6 \rangle$. Since this quadratic form is anisotropic,
we see in particular that the biquaternion algebra $Q_1 \otimes Q_2$ is always a division algebra.
\item In the $E_6$-case $X|_K$ is isomorphic to $Q_1$, and
in the $E_7$-case $X|_K$ is isomorphic to $Q_1\otimes_K K(r)$. 
\end{compactenum}
\end{remark}

The following rather technical lemma will assure that we can choose the structurable algebra $X$ obtained in Theorem~\ref{th: struct expliciet} in a nice way.
\begin{lemma}\label{le:E1}
When applying Theorem~\textup{\ref{th: struct expliciet}} with $a=-v_1/\gamma$, we can always choose $e$ in such a way that $\la=1$.
For those choices we have that  ${\bf 1}=v_1, s_0=-\ga v_1$ and $\mu=\ga^2$. 
\end{lemma}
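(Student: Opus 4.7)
The plan is to make the construction of Theorem~\ref{th: struct expliciet} explicit for the specific choice $a = -v_1/\gamma$, so that the claimed values of ${\bf1}$, $s_0$ and $\mu$ follow by direct substitution into the formulas of that theorem.

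The first step is to compute $\pi(-v_1/\gamma)$ as an element of $L$. This requires the explicit coordinate formulas for $\pi$ and $\theta$ coming from the construction in \cite[Chapter 13]{TW}, combined with the transformation rule in Theorem~\ref{eign:quadr}(v) specialized to $x = v_1 \in X$ and $v = -v_1/\gamma \in L$. The expected outcome is $\pi(-v_1/\gamma) = -v_1/\gamma \in L$, from which $q(\pi(a)) = N(-1/\gamma) = -1/\gamma^2$, so $\delta = 1/\gamma$ (with an appropriate choice of sign) and $\Delta = K(\delta) = E$. Carrying out this verification is the main technical obstacle and is the place where the results from the unpublished manuscript of R.~Weiss mentioned in the acknowledgments are invoked.

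The second step is to exhibit an $e \in M_1$ such that $\lambda = \tfrac{1}{2} g(u'_1, e\pi(e)) = 1$. Since quadrangular algebras of type $E_6$, $E_7$, $E_8$ always lead to the admissible cubic form case (rather than the quadratic form case) in Step~2 of Construction~\ref{con:main} -- their $M_1$ will turn out to be a Jordan algebra attached to a cubic norm on a biquaternion-type algebra, see Theorem~\ref{th:E-main} -- the cubic form $y \mapsto g(u'_1, y\pi(y))$ on $M_1$ is nonzero. Using the cubic scaling $(ke)\pi(ke) = k^3 e\pi(e)$ for $k \in \Delta^*$ (which follows from the cubic behaviour of the triple product in Theorem~\ref{th: FTS}), together with the explicit description of $M_1$ inside $X \otimes_K \Delta$ obtained from the coordinates and from $u'_1 = \tfrac{1}{2}(a + a\pi(a)/\delta)$, one can always produce an $e$ with $g(u'_1, e\pi(e)) = 2$.

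With $\lambda = 1$ (so $\lambda^\sigma = N(\lambda) = 1$) and $\delta = 1/\gamma$ in hand, the formulas of Theorem~\ref{th: struct expliciet} collapse to ${\bf1} = a\pi(a)/\delta^2$, $s_0 = a/\delta^2$ and $\mu = 1/\delta^2 = \gamma^2$. Exploiting the $E$-linearity of the $C(q,1)$-module action on the distinguished $E$-summand of $L$, we compute $a\pi(a) = (-v_1/\gamma) \cdot (-v_1/\gamma) = v_1/\gamma^2 \in X$, which yields ${\bf1} = v_1$ and $s_0 = -\gamma v_1$, as claimed. The hardest part throughout will be the coordinate-based verification that $\pi(-v_1/\gamma) = -v_1/\gamma$; once this is done, the rest is routine substitution.
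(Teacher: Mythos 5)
The overall skeleton (compute $\delta = 1/\gamma$ so that $\Delta = E$, find $e$ with $\lambda = 1$, then substitute into the formulas of Theorem~\ref{th: struct expliciet}) matches the paper, and your final substitution giving ${\bf 1} = \gamma^2\, a\pi(a) = v_1$, $s_0 = \gamma^2 a = -\gamma v_1$, $\mu = \gamma^2$ is fine. But the heart of the lemma is the existence of a suitable $e$, and that is exactly where your argument breaks down. First, your premise that the $E_6$, $E_7$, $E_8$ cases all fall into the admissible-cubic-form branch of Construction~\ref{con:main} is false for $E_6$: there the form $y \mapsto g(u'_1, y\pi(y))$ is \emph{identically zero} on $M_1$ (the paper shows $g(u'_1,e\pi(e)) = 16\gamma^4\rho(x)$ and $\rho \equiv 0$ in the $E_6$ case), so one is in the quadratic-form branch and $\lambda = 1$ holds by the convention of the construction, not because a base point with value $2$ exists. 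This is consistent with Theorem~\ref{th:E-main}(i): the $E_6$ algebra is built from a quaternion (degree~$2$) Jordan algebra, and indeed $\dim_\Delta M_1 = 3$, not the dimension of a cubic Jordan algebra relevant here.

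Second, even in the $E_7$ and $E_8$ cases, where the cubic form is nonzero, your method of producing $e$ with $g(u'_1,e\pi(e)) = 2$ does not work: rescaling $e \mapsto ke$ only changes the value by $k^3$, so from one nonzero value $c$ you reach the set $\{k^3 c : k \in \Delta^*\}$, and there is no reason for $2$ to lie in it; cubes are not surjective in a general field, and you give no argument that the (unnormalized) cubic form is universal on $M_1$. This is precisely why the paper's proof needs the explicit description $M_1 = \{\gamma x \otimes 1 + x \otimes \gamma\}$, the coordinate identity $g(u'_1, e\pi(e)) = 16\gamma^4\rho(x)$ with $\rho(x) = \sum \beta_{ijkl}\, t_m t_{ij} t_{kl}$ (this is where the unpublished manuscript of Weiss enters, and the identity itself is verified by a lengthy, computer-assisted computation), and then an explicit witness $x = \tfrac{1}{2\gamma^2}(v_2 + \gamma v_{34} + \gamma v_{56})$ using $\beta_{3456} = 1$. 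By contrast, your Step~1 (computing $q(\pi(a)) = -1/\gamma^2$ and $a\pi(a) = v_1/\gamma^2$ for $a = -v_1/\gamma$) is a routine coordinate check, not where the outside input is needed. So as written the proposal is missing the key idea of the proof: either an explicit element realizing the value $2$, or a genuine surjectivity argument for the cubic form, plus the separate treatment of the $E_6$ case.
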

\begin{proof}
Recall that $\ga\in E\setminus K$ and $\ga^2\in K$. For $a=-v_1/\gamma$ we have that $q(\pi(a))=-\frac{1}{\ga^2}$ and $a\pi(a)=\frac{v_1}{\ga^2}$.
So $\delta=\frac{1}{\ga}$, and hence $\Delta = K(\delta) = E$.
We point out that in $\XtensorE$, the element $1 \otimes \delta$ is not equal to $\frac{1}{\gamma} \otimes 1$.
For instance, we have
\[ \frac{a\pi(a)}{\delta} = \frac{v_1}{\ga^2} \otimes \gamma \neq \frac{v_1}{\ga} \otimes 1 = -a . \]
If we assume that $\la=1$ using the formulas in Theorem  \ref{th: struct expliciet}, we obtain ${\bf 1}=v_1, s_0=-\ga v_1, \mu=\ga^2$.
In order to prove that we can always find an $e$ such that $\la=1$, we will do some explicit calculations.

We have
\[ u'_1=\tfrac{1}{2} \Bigl( -\frac{v_1}{\ga} \otimes 1 +v_1 \otimes \frac{1}{\ga} \Bigr), \quad
	u'_2 = \frac{1}{2} \Bigl( \frac{v_1}{\ga} \otimes \ga + v_1 \otimes 1 \Bigr).\]
We determine explicitly the subspaces $M_1$ and $M_{-1}$.
One can calculate that for all $x=\sum_{I\in \I} t_I v_I\in X$,
\[u'_1u'_2(x \otimes 1) = x/\ga \otimes \ga, \quad u'_1u'_2(x \otimes \ga) = \ga x \otimes 1,\] 
and for all $x=t_1v_1\in X$,
\[u'_1u'_2(x \otimes 1)=2(x/\ga \otimes \ga),\quad  u'_1u'_2(x \otimes \ga)=2(\ga x \otimes 1).\] 
We find that
\begin{equation}\label{eq:M-1}
\begin{aligned}
M_1&=\Bigl\{ \ga x \otimes 1 + x \otimes \ga \Bigm\vert x = \sum_{I\in \I} t_I v_I \text{ with } t_I\in E\Bigr\},\\
M_{-1}&=\Bigl\{ \ga x \otimes 1 - x \otimes \ga \Bigm\vert x = \sum_{I\in \I} t_I v_I \text{ with } t_I\in E\Bigr\}.
\end{aligned}
\end{equation}

Following an idea of Richard Weiss, we introduce the following notation:
let $i,j,k,l,m$ denote five different indices in $\{2,\dots,6\}$, then $\beta_{ijkl}=\pm 1$ is defined by $v_{ij}v_{kl}=\beta_{ijkl}s_is_js_ks_lv_m$.

Next we need an expression for $g(u'_1, e\pi(e))$ for an arbitrary $e\in M_1$.
(Recall that $g$ is now a map from $(\XtensorE) \times (\XtensorE)$ to $\Delta$.)
% In order to formulate this expression we introduce a map $\rho$, as follows.

So let $x=\sum_{2 \leq i \leq 6} t_i v_i + \sum_{2 \leq i<j \leq 6} t_{ij}v_{ij}\in X$ for $t_i, t_{ij}\in E$, and consider the expression 
\[ \rho(x):=\sum_{ij/kl/m}\beta_{ijkl} t_mt_{ij}t_{kl}\in E = \Delta, \] 
where the summation runs over all partitions of $\{2,\dots,6\}$ into two sets of two elements and one set of one element.
% We decompose $\rho$ by setting
% \[ \rho(x)=\rho_1(x)+\rho_2(x)\ga \]
% with $\rho_1(x), \rho_2(x)\in K$. 

Since in the $E_6$-case no such partition with non-zero coefficients exists, $\rho(x)$ is identically $0$ in this case.

Take $e = \ga x \otimes 1 + x \otimes \ga \in M_1$;
then it follows from a lengthy computation%
\footnote{We wrote a computer program in Sage \cite{sage} to perform this computation for us.}
that
% \[g(u'_1, e\pi(e))=16\gamma^4 \bigl( \rho_1(x) \otimes 1 + \rho_2(x) \otimes \ga \bigr)\in K\otimes_K \Delta.\]
\[g(u'_1, e\pi(e))=16\gamma^4 \rho(x) \in \Delta.\]

In the $E_6$-case, this expression is identically $0$, so $\la=1$ by definition. 
% In the $E_7$- and the $E_8$-case, we look for an $e\in M_1$ such that $g(u'_1,e\pi(e))=~2(1\otimes~1)$, so $8\gamma^4(\rho_1(x) \otimes 1 + \rho_2(x) \otimes \ga)$ should be equal to $1$. 
In the $E_7$- and the $E_8$-case, we look for an $e\in M_1$ such that $g(u'_1,e\pi(e)) = 2$, so $8\gamma^4 \rho(x)$ should be equal to $1$. 
This is indeed the case for
\[ x=\tfrac{1}{2\ga^2}(v_2+\gamma v_{34}+\ga v_{56}), \]
since $\beta_{3456}=1$.
\end{proof}

We now show that, with these choices of $a$ and $e$, the structurable algebra $X$ is a ``twisted'' Jordan algebra of a biquaternion algebra,
i.e.\@ when we restrict the coefficients (w.r.t.\@ the standard basis) to $K$, the algebra $X$ is a Jordan algebra of a biquaternion algebra,
but when we allow coefficients of~$E$, we have to apply the non-trivial Galois automorphism $\sigma$ of $E/K$ at various (compatible) places.
\begin{theorem}\label{th: biquat}
Let $\circ$ denote the multiplication of
\[ (X|_K)^+\cong \bigl( (-s_2,-s_3)_K \otimes_K (-s_{46},-s_{56})_K \bigr)^+, \]
the Jordan algebra obtained\!
\footnote{If $A$ is an associative algebra, then $x\circ y:=\tfrac{1}{2}(xy+yx)$ makes $A$ into a Jordan algebra.
This Jordan algebra is denoted by $A^+$.}
from the (associative) biquaternion algebra.

We choose $a=-v_1/\gamma$, and we let $e$ be as in Lemma~\textup{\ref{le:E1}}, such that $\la=1$.
Then the multiplication of the structurable algebra $X$ obtained in Theorem~\textup{\ref{th: struct expliciet}}, which we will denote by $\star$,
is given by
\begin{align*}
Av_1\star Bv_1 &= AB\,v_1\\
Av_1\star Bv_I &= A^\si B\,v_I\\
Av_I\star Bv_1 &= AB\,v_I\\
Av_I\star Bv_I &= A^\si B \, (v_I\circ v_I) = (-s_I) A^\si B\,v_1\\
Av_I\star Bv_J &= A^\si B^\si \, (v_I\circ v_J)
\end{align*}
for all $A, B\in E$ and all $I \neq J\in \I$.
The involution of $X$ is given by
\[ \overline{Av_1}=A^\si v_1,\quad \overline{Av_I}=A v_I \]
for all $A\in E$ and all $I\in \mathcal{I}$.
\end{theorem}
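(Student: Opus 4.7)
The approach is to use the explicit form of the isometry $\tau = \widetilde{f} \circ \psi \colon X \to \mathcal{A}'^{\widetilde{\pi}}$ from Construction~\ref{con:main}, with the particular choices $a = -v_1/\gamma$ and $\lambda = 1$ provided by Lemma~\ref{le:E1}. The three things to verify are the involution, the product of two elements of $Ev_1$, and the products involving basis elements $v_I$ with $I \in \mathcal{I}$.

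For the involution, I use Lemma~\ref{le:E1} which gives $\mathbf{1} = v_1$, $s_0 = -\gamma v_1$, $\mu = \gamma^2$, so $\mathcal{S} = K s_0 = \gamma K v_1$. For $A = a + \gamma b \in E$, the decomposition $A v_1 = a v_1 - b s_0$ immediately yields $\overline{A v_1} = a v_1 + b s_0 = A^\sigma v_1$. For $I \in \mathcal{I}$, I combine the description $\mathcal{H} = \{k\mathbf{1} + j + \widetilde\eta(j) : k \in K, j \in M_1\}$ from Theorem~\ref{th: struct expliciet} with the explicit shape of $M_1$ given in~\eqref{eq:M-1}: the element $j = \gamma v_I \otimes 1 + v_I \otimes \gamma \in M_1$ (and its $E$-multiples) satisfies $j + \widetilde\eta(j) \in X \otimes 1$ which, after tracking the scalar, produces precisely $Av_I \in \mathcal{H}$, so $\overline{Av_I} = Av_I$.

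For the product $Av_1 \star Bv_1$, I use only $K$-bilinearity of $\star$ together with $s_0 \star s_0 = \mu \mathbf{1} = \gamma^2 v_1$. Writing $A = a + \gamma b$ and $B = c + \gamma d$, direct expansion gives
\[ A v_1 \star B v_1 = (ac + bd \gamma^2)\, v_1 - (ad + bc)\, s_0 = AB \cdot v_1, \]
as claimed.

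For the remaining products, I invoke Lemma~\ref{lem: biquaternion}, which identifies $X|_K$ (equipped with the Clifford multiplication from $C(q_K,1)$) with a biquaternion algebra. Using the explicit inverse formula
\[ \tau^{-1}\begin{pmatrix} d & j \\ t^{-1}(\widetilde{\eta}(j)) & \iota(d) \end{pmatrix} = \gamma d\, u_1 + \iota(d)\, u_2 + j + \widetilde\eta(j) \]
from the proof of Theorem~\ref{th: struct expliciet} (with $\lambda = 1$ and $\delta = 1/\gamma$), I identify $\tau(Av_1)$ with the diagonal matrix $\operatorname{diag}(A^\sigma, A)$, and $\tau(Av_I)$ with an off-diagonal matrix whose $M_1$-entry is (a scalar multiple of) the natural embedding of $A^\sigma v_I$ into $M_1$. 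The multiplication law of $M(M_1, 1/\gamma)$ then reduces all remaining products to the Jordan product $j_1 \circ j_2$ on $M_1$, and under the identification of Lemma~\ref{lem: biquaternion} this is precisely $v_I \circ v_J$ in $(X|_K)^+$.

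The main obstacle is the bookkeeping in the third step. One has to distinguish carefully between the $E$-action on $X$ itself (acting on the left tensor factor of $X \otimes_K E$) and the Galois extension on the right tensor factor, since these play different roles under $\widetilde\eta$. The appearance of $A^\sigma$ (rather than $A$) in the product formulas traces back to the fact that an element $v_I \otimes 1$ is not fixed by $\widetilde\eta$ and its $M_1$-component involves applying $\sigma$ on the scalar coefficient; propagating this through the matrix product of $M(M_1, 1/\gamma)$ produces exactly the $\sigma$-sesquilinearity pattern in the statement. Once the matrix entries $\tau(Av_I)$ are correctly written down, the verification of the five product formulas reduces to straightforward matrix multiplication combined with the identification of $j \circ j$ for $j \in M_1$ with $-s_I \cdot v_1$ (which is the square of $v_I$ in the Clifford algebra).
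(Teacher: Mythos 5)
Your treatment of the involution and of $Av_1\star Bv_1$ is essentially the paper's own argument (Lemma~\ref{le:E1}, the description of $\mathcal H$ via \eqref{eq:M-1}, $K$-bilinearity and $s_0\star s_0=\mu\mathbf 1$), and your identification $\tau(Av_1)=\operatorname{diag}(A^\si,A)$ is correct, so the products of a $v_1$-part with a $v_I$-part would indeed drop out of the matrix picture. The problem is the remaining — and central — products $Av_I\star Bv_J$ (and $Av_I\star Bv_I$), where your argument has a genuine gap rather than a routine verification left to the reader.

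In $M(M_1,\delta/N(\lambda))$ the product of two off-diagonal matrices is expressed through the trace form $T$ and the sharp/cross product $\times$ of the Jordan algebra $M_1$ (the diagonal entries are $\eta T(j_1,j_2')$-terms, the off-diagonal ones are $\times$-terms), not through the Jordan product $j_1\circ j_2$ as you assert. Moreover $M_1$ is the $15$-dimensional cubic Jordan algebra over $\Delta=E$ produced by Ferrar's construction from $N(x)=g(u_1',x\pi(x))/g(u_1',e\pi(e))$ with base point $e$; it is \emph{not} $(X|_K)^+$, which is a $16$-dimensional degree-four Jordan algebra over $K$. Lemma~\ref{lem: biquaternion} only identifies $X|_K$, with its Clifford multiplication, as a biquaternion algebra; it provides no identification of $N$, $T$ or $\times$ on $M_1$ with anything Clifford-theoretic, and establishing that link in coordinates is exactly the hard computational content your proof skips. (Your bookkeeping explanation is also off: $v_I\otimes 1$ \emph{is} fixed by $\widetilde\eta$, and its $M_1$-component is the image of $\tfrac{1}{2\gamma}v_I$ with no $\si$ applied; the $\si$'s in the multiplication table come from the diagonal matrix $\operatorname{diag}(A^\si,A)$, i.e.\@ from the $u_1$-coefficient of $Av_1$.) The paper circumvents all of this: for $x\in\mathcal H$ it writes $x\star y=V_{x,\mathbf 1}\,y=V_{x,\,s_0\star\frac1\mu s_0}\,y$, converts this via Theorem~\ref{th: struct expliciet} into the explicit $h$-formula \eqref{eq:Estar}, evaluates that formula with the coordinate expressions (13.6) and (13.19) of Tits--Weiss, and treats the excluded case $x=\gamma v_1\in\Ss$ by the conjugation trick $Av_1\star Bv_I=\overline{\overline{Bv_I}\star\overline{Av_1}}$. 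To salvage your route you would have to compute the cubic norm structure of $M_1$ (its $T$ and $\times$) explicitly and match it against the Clifford product on $X|_K$; as it stands, the key step is missing.
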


\begin{proof}
By Lemma~\ref{le:E1}, we have ${\bf 1}=v_1$, $s_0=-\ga v_1$ and $\mu=\ga^2$.
We know that $s_0\star s_0=\mu{\bf 1}$, so $\ga v_1\star \ga v_1=\ga^2 v_1$. Since $v_1$ is the identity of $\star $, we have $Av_1\star Bv_1=ABv_1$ for all $A,B\in E$.

Since $X$ has skew-dimension one, it follows from the definition of $s_0$ that \[\Ss=\{x\in X \mid \overline{x}=-x\}=Ks_0=K\ga v_1.\] 
From Theorem~\ref{th: struct expliciet} we have
\[ \mathcal{H}=\{x\in X \mid \overline{x}=x\}=\{k{\bf 1}+j+\widetilde{\eta}(j)\mid k\in K,j\in M_1\}. \]
It follows from equation~\eqref{eq:M-1} that every $j\in M_1$ is of the form $\ga x\otimes 1+ x\otimes \ga$ for some $x\in \bigoplus_{I\in \I}Ev_I$. 
Then $\widetilde{\eta}(j)=\ga x\otimes 1- x\otimes \ga\in M_{-1}.$ It follows that 
\[\{j+\widetilde{\eta}(j)\mid j\in M_1\}=\{2\ga x\otimes 1\mid x\in \bigoplus_{I\in \I}Ev_I\}=\bigoplus_{I\in \I}Ev_I.\]
Therefore
 \[\mathcal{H}=Kv_1\oplus \bigoplus_{I\in \I} Ev_I.\]
It follows that we have for $A\in E$ that
\[\overline{Av_1}=A^\si v_1,\quad \overline{Av_I}=A v_I \text{ for all } I\in \mathcal{I}. \]
For all $x\in \mathcal{H}$ and $y\in X$, we have
\[ V_{x,{\bf 1}}\,y=(x\star \overline{{\bf 1}})\star y+(y\star \overline{{\bf 1}})\star x-(y\star \overline{x})\star {\bf 1}=x\star y. \]
It now follows from Theorem~\ref{th: struct expliciet} that 
\begin{equation}\label{eq:Estar}
	x\star y = V_{x,\,s_0\star \frac{1}{\mu}s_0}\,y = -\frac{1}{2\ga^2} \Bigl( xh(\ga v_1,y)+(\ga v_1)h(x,y)+yh(x,\ga v_1) \Bigr).
\end{equation}
Now we can compute $x\star y$ for all different values that can occur, using the formulas from \cite[(13.6) and (13.19)]{TW}.
Let $i,j,k,l$ be distinct indices in $\{2,3,4,5,6\}$; then one can verify the following multiplication table:
\[
\begin{array}{c|c||c}
x&y&x\star y\\
\hline
Av_i&Bv_1&ABv_i\\
Av_i&Bv_i&-s_i A^\si B v_1\\
Av_i&Bv_k&0\\
Av_i&Bv_{ik}&0\\
Av_i&Bv_{kl}&2A^\si B^\si v_iv_{kl}\\
Av_{ij}&Bv_{1}&AB v_{ij}\\
Av_{ij}&Bv_{ij}&-s_is_j A^\si B v_1\\
Av_{ij}&Bv_{ik}&0\\
Av_{ij}&Bv_{kl}&2A^\si B^\si v_{ij}v_{kl}\\
\end{array}\]
Observe that this multiplication coincides with the Jordan multiplication in $(X|_K)^+$ if $A,B$ are restricted to $K$.

Note that the formula~\eqref{eq:Estar} is not valid for $x=\ga v_1\in \Ss$;
this case is obtained by
\[A v_1\star B v_I=\overline{\overline{Bv_I}\star \overline{A v_1}}=A^\si B v_I. \tag*{\qedhere} \]
\end{proof}
\begin{remark}\label{rem:arason}
The structurable algebra described above, consisting of a twisted Jordan algebra of a biquaternion algebra, is defined up to isotopy by the quadrangular algebra; in particular it is determined by the quadratic form of type $E_6$, $E_7$ or $E_8$.
In the $E_8$ case, there is a strong relation between the Arason invariant of the quadratic form $q$,
and the twisted Jordan algebra of the biquaternion algebra.

The Albert form of the biquaternion algebra in Theorem~\ref{th: biquat} is 
\[q_A=\langle s_2,s_3,s_{23},-s_{46},-s_{56},-s_{45}\rangle;\]
$q_A$ is Witt equivalent to $\llangle -s_2,-s_3\rrangle \perp- \llangle -s_{46},-s_{56}\rrangle$.
In fact, $q_A$ is similar to $\langle 1,s_2,s_3,s_4,s_5,s_6\rangle$ (note that $s_2s_3s_4s_5s_6=-1$):
\begin{align*}
\langle s_2,s_3,s_{23},-s_{46},-s_{56},-s_{45}\rangle&\simeq\langle s_2,s_3,s_{23},s_{235},s_{234},s_{236}\rangle\\
&\simeq s_{23} \langle s_3,s_2,1,s_5,s_4,s_6\rangle\\
&\simeq s_{23} \langle 1,s_2,s_3,s_4,s_5,s_6\rangle.
\end{align*}

Let $I^3K$ be the ideal in the Witt ring of $K$ that is generated by the $3$-fold Pfister forms
%\footnote{Let $a,b,c\in K$, the 3-fold Pfister form is the quadratic form $\langle 1,-a\rangle\langle 1,-b\rangle\langle 1,-c\rangle:K^8\rightarrow K$ and is denoted by $\langle\langle a,b,c\rangle\rangle$.}
over $K$;
this ideal consists precisely of the classes $[q]$ of quadratic forms $q$ having even dimension, trivial discriminant, and trivial Hasse--Witt invariant.

The Arason invariant $e_3$ is a cohomological invariant\footnote{The expression $(a) \cup (b) \cup (c)$ is the cup product of elements in $H^1(K,\Z/2\Z)$.}
\begin{align*}
	e_3 \colon \{\text{3-Pfister forms over }K\} &\to H^3(K,\Z/2\Z) \colon \\
	\llangle a,b,c \rrangle &\mapsto (a)\cup(b)\cup(c);
\end{align*}
it extends to a well-defined group morphism $e_3 \colon I^3K \to H^3(K,\Z/2\Z)$,
that only depends on the similarity class of the quadratic form in the Witt ring (see \cite{Arason}).
It follows from the Milnor conjecture (see \cite[7.5]{V}) that this induces an isomorphism
\[ e_3 \colon I^3K/I^4K \xrightarrow{\sim} H^3(K,\Z/2\Z) . \]

Let $q$ be a form of type $E_8$; then $q$ has even dimension, trivial discriminant, and trivial Hasse--Witt invariant, so $q\in I^3K$.
In order to determine $e_3(q)$, we first rewrite $q$ in the Witt ring (note that $N = \langle 1, -\gamma^2\rangle$):
\begin{align*}
q&=N\otimes\langle 1,s_2,s_3,s_4,s_5,s_6\rangle\\
&\simeq N\otimes(\llangle -s_2,-s_3\rrangle \perp- \llangle -s_{46},-s_{56}\rrangle)\\
&\simeq\llangle\ga^2,-s_2,-s_3\rrangle \perp - \llangle\ga^2,-s_{46},-s_{56}\rrangle.
\end{align*}
It follows that
\begin{align*}
e_3(q)&=(\ga^2)\cup(-s_2)\cup(-s_3)-(\ga^2)\cup(-s_{46})\cup(-s_{56})\\
&=d(N)\cup\bigl((-s_2)\cup(-s_3)-(-s_{46})\cup(-s_{56})\bigr)\\
&=d(N)\cup c(\langle s_2,s_3, s_{23}, -s_{46}, -s_{56}, -s_{45}\rangle),
\end{align*}
where $d$ denotes the image of the discriminant in $H^1(K,\Z/2\Z)$, and $c$ denotes the Hasse--Witt invariant in $H^2(K,\Z/2\Z)$.

This invariant determines quadratic forms of type $E_8$ up to similarity:
if $q$ and $q'$ are two quadratic forms of type $E_8$ with $e_3(q)=e_3(q') \in H^3(K,\Z/2\Z)$,
then $q\equiv q' \in I^4 K$, and from \cite[Conjecture 1]{H} for $k=1$, it follows that $q$ and $q'$ are similar. 

We conclude that the invariant $e_3(q)$ completely determines the isotopy class of the structurable algebra in Theorem~\ref{th: biquat}.
(On the other hand, notice that neither $E$ nor the biquaternion algebra $(X|_K)^+$ are invariants of the quadratic form $q$
or of the corresponding structurable algebra.)
\end{remark}

\begin{remark}
The map $\nu=-1/\ga^2 \cdot q\circ \pi$ is the conjugate norm on a structurable algebra of skew-dimension one.
When we consider only the $K$\nobreakdash-part of $X$, we obtain a nice expression for $q\circ\pi$ restricted to $X|_K$. 

Indeed, in \cite[Example 4.2]{norms} it is shown that the conjugate norm of a Jordan algebra is just its generic norm. When a Jordan algebra is obtained from a central simple associative algebra, its generic norm is equal to the reduced norm of the central simple algebra.

As $X|_K$ is the Jordan algebra arising from a biquaternion algebra, we find that
\[ q(\pi(x))=-\ga^2\nu(x)=N(\ga)\Nrd(z) \]
for $x \in X|_K$, where $x = \psi(z)$ for $z\in Q_1\otimes Q_2$.
This fact also follows from \cite[Proposition 6.7]{CD}, using the Cayley--Dickson process that we will explain in the next section.
\end{remark}

\subsection{The Cayley--Dickson process for structurable algebras}

In \cite[Section 6]{CD}, Allison and Faulkner introduce a construction of structurable algebras starting from a certain class of Jordan algebras;
this procedure is a generalization of the classical Cayley--Dickson doubling process.
In the $E_8$-case, the algebra described in Theorem~\ref{th: biquat} can also be obtained by applying this Cayley--Dickson process
to the Jordan algebra obtained from a biquaternion algebra, as we will now explain.

In order to obtain a structurable algebra, one needs to start from a Jordan algebra equipped with a Jordan norm of degree 4.
We will briefly explain the Cayley--Dickson process, and refer to~\cite{CD} for more details.
\begin{definition}\label{def:CD}
Let $J$ be a Jordan algebra over K.
A form $Q \colon J\rightarrow K$ is a {\em Jordan norm of degree 4} if the following properties hold, for all $k\in K$ and all $j,j'\in J$:
\begin{compactenum}[(i)]
\item $Q(k j)=k^4Q(j)$;
\item $Q(1)=1$;
\item $Q(U_j j')=Q(j)^2Q(j')$;
\item The trace form
\[ T \colon J\times J\to k\colon (j,j')\mapsto Q(1;j)Q(1;j')-Q(1;j,j') \]
is a bilinear non-degenerate form.
\end{compactenum}
Let $Q$ be a Jordan norm of degree 4, and consider the linear automorphism $\theta$ on $J$ given by
\[ b^\theta=-b+\tfrac{1}{2}T(b,1)1 ; \]
observe that $\theta^2 = 1$.
Let $\mu \in K$.
Then $\CD(J, Q,\mu) := J\oplus s_0 J$,
with multiplication and involution given by
\begin{align*}
	(j_1+s_0j'_1)(j_2+s_0j'_2) &= j_1j_2+\mu{(j'_1j'^{\theta}_2)}^{\theta}+s_0\bigl(j_1^\theta j'_2+(j'^\theta_1 j^\theta_2)^\theta), \\
	\overline{(j+s_0j')} &= j-s_0j'^\theta,
\end{align*}
is a structurable algebra of skew-dimension one; see \cite[Theorem 6.6]{CD}.
\end{definition}

We start with the central simple biquaternion algebra equipped with the reduced norm $\Nrd$.
\begin{lemma}
Let $Q_1\otimes_K Q_2$ be a biquaternion algebra over a field $K$.
Then the reduced norm $\Nrd$ is a Jordan norm of degree 4
of the Jordan algebra $(Q_1\otimes_K Q_2)^+$.
\end{lemma}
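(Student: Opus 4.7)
The plan is to verify the four axioms of Definition~\ref{def:CD} directly for $Q = \Nrd$ on $A = Q_1 \otimes_K Q_2$; the first three are essentially formal, and only (iv) requires a small polarization computation.

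First I would observe that axiom~(i) is immediate because $A$ is a central simple algebra of degree $4$ over $K$, so $\Nrd$ is a quartic form on $A$; axiom~(ii) is standard. For axiom~(iii), recall that in the Jordan algebra $A^+$ the Jordan $U$-operator is given by $U_j j' = j j' j$ (which is one of the defining features of the plus-construction for an associative algebra). The multiplicativity of the reduced norm on $A$ then gives
\[
\Nrd(U_j j') = \Nrd(j j' j) = \Nrd(j)^2\, \Nrd(j'),
\]
which is exactly what is required.

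For axiom~(iv), the key step will be to identify $T$ with a familiar bilinear form on $A$. Using the reduced characteristic polynomial of an element $b \in A$, one has the expansion
\[
\Nrd(1 + tb) = 1 + \Trd(b)\,t + s_2(b)\,t^2 + s_3(b)\,t^3 + \Nrd(b)\,t^4,
\]
so that $Q(1; b) = \Trd(b)$. A short computation (for instance by expanding $\Nrd(1 + sb + tb') = \Nrd(1+sb)\,\Nrd\bigl(1 + t(1+sb)^{-1}b'\bigr)$ over an algebraic closure and reading off the coefficient of $st$) then yields
\[
Q(1; b, b') = \Trd(b)\Trd(b') - \Trd(b b'),
\]
and hence
\[
T(b, b') = Q(1; b)\,Q(1; b') - Q(1; b, b') = \Trd(b b').
\]
This is the standard reduced trace bilinear form, which is non-degenerate precisely because $A$ is central simple (equivalently, separable) over $K$.

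The main obstacle is therefore (iv), but it amounts only to a straightforward polarization calculation together with the well-known non-degeneracy of the reduced trace form on a central simple algebra; once this identification is in place, the lemma follows immediately.
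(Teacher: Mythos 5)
Your proof is correct and follows essentially the same route as the paper: homogeneity and $\Nrd(1)=1$ are immediate, axiom (iii) comes from $U_j j' = jj'j$ together with multiplicativity of the reduced norm, and axiom (iv) reduces to the non-degeneracy of the reduced trace form on a central simple algebra. Your polarization computation showing $T(b,b')=\Trd(bb')$ is accurate and merely makes explicit what the paper asserts without detail.
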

\begin{proof}
The central simple algebra $Q_1\otimes_K Q_2$ has degree 4, so its reduced norm has degree 4 and the trace form is bilinear nondegenerate.
Since the Jordan algebra arises from a biquaternion algebra, we have $U_j j'=j j' j$, and it follows that $\Nrd(U_j j')=\Nrd(j)^2\Nrd(j')$.
\end{proof}

In order to apply the Cayley--Dickson construction to $(Q_1\otimes Q_2)^+$, we have to determine the trace map $T$ associated to $\Nrd$ explicitly.
Since $T$ is bilinear, it suffices to compute its value for elements of the form $a \otimes b$ in $Q_1\otimes Q_2$. 

It turns out that
\[ T(a \otimes b, a' \otimes b')=\Trd(a,a') \Trd(b,b')\quad \forall a,a'\in Q_1, b, b'\in Q_2,\]
where $\Trd$ is the reduced trace.
For $a=a_1+a_2\ell+a_3m+a_4 \ell m \in Q_1$, $b=b_1+b_2n+b_3r+b_4 nr \in Q_2$, we have that $T(a \otimes b, 1 \otimes 1)=4a_1b_1$,
so \[(a \otimes b)^\theta = - a \otimes b + 2 a_1 b_1 (1 \otimes 1).\]
From now on we assume $Q_1=(-s_2,-s_3)_K$ and $Q_2= (-s_{46},-s_{56})_K$.
Using the isomorphism $\psi$ from Lemma \ref{lem: biquaternion} between $Q_1\otimes_K Q_2$ and $X|_K$, $\theta$ acts on $X|_K$ as follows:
\begin{equation}\label{eqn: theta}
	\Bigl( t_{1_1}+\sum_{i\in \I} t_{i_1}v_i \Bigr)^\theta=t_{1_1}-\sum_{i\in \I} t_{i_1}v_i\quad \text{ for } t_{i_1}\in K.
\end{equation}

\begin{theorem}\label{th: CD}
The twisted Jordan biquaternion algebra $X$ defined in Theorem~\textup{\ref{th: biquat}} is isomorphic to $\CD\bigl((Q_1\otimes Q_2)^+,\Nrd,\ga^2\bigr)$.
\end{theorem}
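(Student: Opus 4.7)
The plan is to construct an explicit isomorphism
\[ \Phi: \CD\bigl((Q_1\otimes Q_2)^+, \Nrd, \gamma^2\bigr) \to X \]
and verify it respects the algebra structure. Using the isomorphism $\psi: Q_1 \otimes Q_2 \xrightarrow{\sim} X|_K$ from Lemma \ref{lem: biquaternion}, I would define
\[ \Phi(j + s_0 j') := \psi(j) + s_0 \star \psi(j') \quad \text{for } j, j' \in J := (Q_1 \otimes Q_2)^+, \]
where on the right-hand side $s_0 \in X$ is the skew-symmetric element from Lemma \ref{le:E1} and $\star$ is the multiplication of $X$ from Theorem \ref{th: biquat}. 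That $\Phi$ is a $K$-linear bijection is immediate from Theorem \ref{th: biquat}: the identities $s_0 \star v_1 = -\gamma v_1$ and $s_0 \star v_I = \gamma v_I$ for $I \in \I$ show that $s_0 \star X|_K$ is a $K$-linear complement of $X|_K$ inside $X$. Moreover $\Phi(1_J) = \psi(v_1) = v_1 = 1_X$, so $\Phi$ is unital.

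Next, compatibility with the involution: the CD involution reads $\overline{j + s_0 j'} = j - s_0 j'^\theta$, while the involution of $X$ (from Theorem \ref{th: biquat}) satisfies $\overline{Av_1} = A^\sigma v_1$ and $\overline{Av_I} = Av_I$. In view of equation (\ref{eqn: theta}), one has $\psi(j^\theta) = \psi(j)^\theta$, and the two involutions agree on $\Phi$ by a direct check on basis elements. The heart of the proof is multiplicativity:
\[ \Phi\bigl((j_1 + s_0 j'_1)(j_2 + s_0 j'_2)\bigr) = \Phi(j_1 + s_0 j'_1) \star \Phi(j_2 + s_0 j'_2). \]
By bilinearity this reduces to checking pairs of basis elements of the form $v_I$ or $s_0 v_I$, giving four cases. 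The $J \times J$ case holds because the $\star$-product restricted to $X|_K$ coincides with the Jordan product transported along $\psi$ (this is apparent from Theorem \ref{th: biquat} since $\sigma$ acts trivially on $K$). The mixed cases $J \times s_0 J$, $s_0 J \times J$ and the $s_0 J \times s_0 J$ case are verified by direct computation, combining the multiplication table of Theorem \ref{th: biquat}, the CD formula of Definition \ref{def:CD}, and the identity $\psi(j^\theta) = \psi(j)^\theta$.

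The main obstacle is the bookkeeping of sign conventions arising from three independent sources: the Galois involution $\sigma$ of $E/K$ sending $\gamma \mapsto -\gamma$, the involution $\theta$ on $J$ which flips signs on the $v_I$-coefficients, and the intrinsic minus signs in $v_I \star v_I = -s_I v_1$ and $s_0 \star v_1 = -\gamma v_1$. A representative verification illustrating how these conspire: for $I \in \I$, the CD side gives
\[ (s_0 v_I)(s_0 v_I) = \gamma^2 (v_I v_I^\theta)^\theta = \gamma^2(-v_I^2)^\theta = \gamma^2(s_I v_1)^\theta = \gamma^2 s_I v_1, \]
while the $X$-side gives $(\gamma v_I) \star (\gamma v_I) = -s_I \gamma^\sigma \gamma \, v_1 = -s_I(-\gamma)(\gamma) v_1 = \gamma^2 s_I v_1$; likewise $(v_1)(s_0 v_I) = s_0 v_I$ on the CD side matches $v_1 \star (\gamma v_I) = \gamma v_I$ in $X$, and $(v_I)(s_0 v_1) = -s_0 v_I$ matches $v_I \star (-\gamma v_1) = -\gamma v_I$. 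All remaining cases follow the same pattern.
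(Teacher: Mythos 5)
Your proposal is correct and follows essentially the same route as the paper: you construct the explicit $K$-linear isomorphism (your $\Phi$ is precisely the inverse of the map $\chi$ used in the paper, since $s_0\star v_1=-\gamma v_1$ and $s_0\star v_I=\gamma v_I$ reproduce its coordinate description), check unitality and compatibility of the two involutions via $\theta$, and verify multiplicativity case by case using the table of Theorem~\ref{th: biquat} and Definition~\ref{def:CD}. Your sample sign verifications are accurate, and deferring the remaining cases to a routine but lengthy computation matches the level of detail in the paper's own proof.
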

\begin{proof}

Applying the Cayley--Dickson process on $(Q_1\otimes_K Q_2)^+\cong X|_K$ we have $\CD\bigl((Q_1\otimes_K Q_2)^+,\Nrd,\ga^2\bigr) = X|_K\oplus s_0X|_K$,
where the multiplication and involution on $X|_K\oplus s_0X|_K$ are as in Definition~\ref{def:CD}.

We now define a $K$-vector space isomorphism $\chi$ from $X$ to $X|_K\oplus s_0X|_K$:
\begin{multline*}
	(t_{1_1}+\ga t_{1_2})v_1+\sum_{i\in I}(t_{i_1}+\ga t_{i_2})v_i \\[-2ex]
		\mapsto (t_{1_1}v_1+\sum_{i\in I}t_{i_1}v_i)+s_0(-t_{1_2}v_1+\sum_{i\in I}t_{i_2}v_i)
\end{multline*}
for all $t_{i_1}, t_{i_2}\in K$.

We first check that the involution from Theorem \ref{th: biquat} is the same as the one we get from the Cayley--Dickson process. 
\begin{align*}
\chi(\overline{t_1v_1+\sum_{i\in \I}t_iv_i})&=\chi(t_1^\si v_1+\sum_{i\in \I}t_iv_i)\\
&=(t_{1_1}v_1+\sum_{i\in \I}t_{i_1}v_i)+s_0(t_{1_2}v_1+\sum_{i\in \I}t_{i_2}v_i)\\
&=(t_{1_1}v_1+\sum_{i\in \I}t_{i_1}v_i)-s_0(-t_{1_2}v_1+\sum_{i\in \I}t_{i_2}v_i)^\theta\quad \text{by } \eqref{eqn: theta}\\
&=\overline{\chi(t_1v_1+\sum_{i\in \I}t_iv_i)}.
\end{align*}
It follows immediately from Definition~\ref{def:CD} that $\chi(x)\chi(y)=\chi(x\star y)$ for all $x,y\in X|_K$.
It requires a straightforward but lengthy calculation to verify that $\chi(x)\chi(y)=\chi(x\star y)$ for $x,y\in X$ as well.
\end{proof}

\begin{remark}
In the $E_6$-case, $X$ is a twisted quaternion algebra; it is a structurable algebra isomorphic to $\CD(Q_1^+,\Nrd,\ga^2)$.
In the $E_7$-case, $X$ is a twisted quaternion algebra over a quadratic field extension;
it is a structurable algebra isomorphic to $\CD\bigl((Q_1\otimes_K K(r))^+,\Nrd,\ga^2\bigr)$.
\end{remark}

\begin{remark}
	There is an alternative approach to Theorem~\ref{th: biquat} and Theorem~\ref{th: CD}, as follows.
	It is, in principle, possible to verify directly (with a very lengthy computation, or using Sage \cite{sage}) that
	\[ \chi\bigl(3x\pi(x)\bigr) = U_{\chi(x)} \bigl( s_0 \chi(x) \bigr) \]
	for all $x \in X$, where the $U$ in the right-hand side denotes the $U$-operator in the structurable algebra $\CD\bigl((Q_1\otimes Q_2)^+,\Nrd,\ga^2\bigr)$.
	It then follows immediately that $X$ and $\CD\bigl((Q_1\otimes Q_2)^+,\Nrd,\ga^2\bigr)$ are isotopic, and since $\chi(v_1) = 1 \otimes 1$,
	it follows that these algebras are, in fact, isomorphic.
	It is then possible to compute the multiplication of $\CD\bigl((Q_1\otimes Q_2)^+,\Nrd,\ga^2\bigr)$ explicitly using this isomorphism $\chi$.
\end{remark}
                                 
\subsection{The pseudo-quadratic spaces on $E_6$ and $E_7$}

In this last section, we assume that $\Omega = (K,L,q,1,X,\cdot,h,\theta)$ is a quadrangular algebra of type $E_6$ or $E_7$.
In this case, the rank one residues of the corresponding algebraic group (see Figures~\ref{fig:E6} and~\ref{fig:E7}) are classical,
and this fact is also visible at the level of the structurable algebras.
Indeed, in \cite{W67} it is shown that $X$ can be made into a $4$-dimensional (left) vector space over $E$ or over the quaternion algebra $D=(E/K,s_2s_3s_4)$,
respectively;
moreover, there is, in both cases, an anisotropic pseudo-quadratic form on this vector space $X$, denoted by $\hat{Q}$,
with the property that
\begin{equation}\label{eq:E67}
	x\pi(x) = \hat{Q}(x) * x
\end{equation}
for all $x\in X$.
(We have used the symbol $*$ to denote the scalar multiplication of $X$ over $E$ or $D$, respectively.)
We refer to \cite[Definition 3.6, and Theorems 5.3 and 5.4]{W67} for more details.

It follows from equation~\eqref{eq:E67} and Theorem~\ref{th: herm struct alg} that
the \FTS\ corresponding to this pseudo-quadratic form as in Theorem~\ref{th: herm struct alg}
is similar to the \FTS\ of $X$ as a quadrangular algebra of type $E_6$ or $E_7$.
From Lemma \ref{lem: similar and isotopic} it follows that the corresponding structurable algebras are isotopic.

It is also interesting to note that it is shown in \cite[Theorem 5.12]{W67} that
\[ q(\pi(x))=N(\hat{Q}(x)) \]
for all $x \in X$;
both sides of this expression are, up to a constant, equal to the conjugate norm of the structurable algebra (see Theorem~\ref{th: struct expliciet}),
where the left hand side corresponds to the structurable algebra arising from the quadrangular algebra of type $E_6$ or $E_7$,
and the right hand side corresponds to the structurable algebra arising from the pseudo-quadratic form.

%%%%%%%%%%%%%%%%%%%%%%%%%%%%%%%%%%%%%%%%%%%%%%%%%%%%%%%%%%%%%%%%%%%%%%%%%%%%%%%%%%%%%%%%%%%%%%%%%%%%%%%%%%%%%%%%%%
%                                                                                                                %
%  THE BIBLIOGRAPHY                                                                                              %
%                                                                                                                %
%%%%%%%%%%%%%%%%%%%%%%%%%%%%%%%%%%%%%%%%%%%%%%%%%%%%%%%%%%%%%%%%%%%%%%%%%%%%%%%%%%%%%%%%%%%%%%%%%%%%%%%%%%%%%%%%%%

\vspace*{1ex}
\hrule
\vspace*{2ex}

\footnotesize

\noindent
Lien Boelaert,
Department of Mathematics,
Ghent University \\
Krijgslaan 281, S22,
B-9000 Gent, Belgium \\
{\tt lboelaer@cage.UGent.be}

\vspace{2ex}

\noindent
Tom De Medts,
Department of Mathematics,
Ghent University \\
Krijgslaan 281, S22,
B-9000 Gent, Belgium \\
{\tt tdemedts@cage.UGent.be}

\end{document}